\tikzstyle{vertex}=[circle,draw=black,fill=black,inner sep=0,minimum size=0.2cm,text=white,font=\footnotesize]
\newtheorem*{thm*}{Theorem}
\newcommand{\ff}{{\mathcal F}}
\newcommand{\aaa}{{\mathcal A}}
\newcommand{\G}{{\mathcal G}}
\newcommand{\h}{{\mathcal H}}
\newtheorem*{gypoerd}{Erd\H os Matching Conjecture}
\newtheorem*{cla*}{Claim}
\newcommand{\bb}{{\mathcal B}}
\newtheorem{thm}{Theorem}
\newtheorem{lem}[thm]{Lemma}
\newtheorem{cla}[thm]{Claim}
\newcommand{\eps}{{\varepsilon}}
\newtheorem{cor}[thm]{Corollary}
\date{}
\newtheorem{prb}{Problem}
\newtheorem{prop}[thm]{Proposition}
\DeclareMathOperator{\E}{\mathrm E}
\newcommand{\Var}{\mathrm{Var}}
\newcommand{\Cov}{\mathrm{Cov}}
\date{}
\title{The Erd\H os Matching Conjecture and Concentration Inequalities}
\author{Peter Frankl}\address{R\'enyi Institute, Budapest, Hungary; Email: {\tt peter.frankl@gmail.com}}
\author{Andrey Kupavskii}
\address{G-SCOP, CNRS, Universit\'e Grenoble-Alpes, France and
Moscow Institute of Physics and Technology; Email: {\tt kupavskii@ya.ru}.} \thanks{The research of the second author was partially supported by the Advanced Postdoc.Mobility grant no. P300P2\_177839 of the Swiss National Science Foundation, EPSRC grant no. EP/N019504/1, by the grant 18-01-00355 of the Russian Foundation for Basic Research, and the grant of the president \foreignlanguage{russian}{НШ}-2540.2020.1}
\begin{document}
\maketitle
\begin{abstract}
  More than 50 years ago, Erd\H os asked the following question: what is the maximum size of family $\ff$ of $k$-element subsets of an $n$-element set if it has no $s+1$ pairwise disjoint sets? This question attracted a lot of attention recently, in particular due to its connection to various combinatorial, probabilistic and theoretical computer science problems. Improving the previous best bound due to the first author, we prove that $|\ff|\le {n\choose k}-{n-s\choose k}$, provided $n\ge \frac 53sk -\frac 23 s$ and $s$ is sufficiently large. The bound on $|\ff|$ is sharp since the family of all $k$-sets that intersect some fixed $s$-element set has such size and has no $s+1$ pairwise disjoint sets. We derive several corollaries concerning Dirac thresholds and deviations of sums of random variables. We also obtain several related results.
\end{abstract}

\section{Introduction}
We consider the following classical problem due to Erd\H os.  Suppose that positive integers $n,k,s$ satisfy $n\ge k(s+1)$. Let $\ff\subset{[n]\choose k}$ be a $k$-graph (a family of $k$-element subsets) on the vertex set $[n]:=\{1,\ldots,n\}$. A {\it matching} in $\ff$ is a collection of pairwise disjoint sets in $\ff$. We denote by $\nu(\ff)$ the {\it matching number} of $\ff$, that is, the maximum size of a matching in $\ff$. Then the problem is as follows: determine the maximum $m(n,k,s)$ of $|\ff|$ subject to the condition  $\nu(\ff)<s+1.$

Each of the following families has matching number $s$.
\begin{equation}\label{ainks}\mathcal A_i(n,k,s):=\Big\{A\in {[n]\choose k}: |A\cap [i(s+1)-1]|\ge i\Big\}.\end{equation}
Let us put $\mathcal A:=\mathcal A_1(n,k,s)$ and $\mathcal A_k:=\mathcal A_k(n,k,s)$ for shorthand. 
Note that
\begin{eqnarray}
  \label{size1} |\aaa| & =&{n\choose k}-{n-s\choose k}={n-1\choose k-1}+\ldots +{n-s\choose k-1} \ \ \ \text{and}\\
  \label{size2} |\aaa_k| & =&{k(s+1)-1\choose k}.
\end{eqnarray}
Note also that the right hand side of \eqref{size2} is independent of $n$.

Erd\H os conjectured that $m(n,k,t)$ always equals the right hand size of either \eqref{size1} or \eqref{size2}.
\begin{gypoerd}[Erd\H os, \cite{E}]
  We have \begin{equation}\label{eqemc0} m(n,k,s)=\max\Big\{{n\choose k}-{n-s\choose k}, {k(s+1)-1\choose k}\Big\}.
          \end{equation}
\end{gypoerd}
It was one of the favourite problems of Erd\H os and there was hardly a combinatorial lecture of him where he did not mention it.

The Erd\H os Matching Conjecture, or EMC for short, is trivial for $k=1$ and was proved by Erd\H os and Gallai \cite{EG} for $k=2$. It was settled in the case $k=3$ and $n\ge 4s$ in \cite{FRR}, for $k=3$, all $n$ and $s\ge s_0$ in \cite{LM}, and, finally, it was completely resolved for $k=3$ in \cite{F6}.

The case $s=1$ is the classical Erd\H os-Ko-Rado theorem \cite{EKR} which was the starting point of a large part of ongoing research in extremal set theory.

In his original paper, Erd\H os proved \eqref{eqemc0} for $n\ge n_0(k,s)$ for some $n_0(k,s)$. His result was sharpened by Bollob\'as, Daykin and Erd\H os \cite{BDE}, who established \eqref{eqemc0} for $n\ge 2k^3s$. Subsequently, Hao, Loh and Sudakov \cite{HLS} proved the EMC for $n\ge 3k^2s$. Their proof relies in part on the ``multipartite version'' of the following universal bound from \cite{Fra3}:
\begin{equation}\label{eqemcbad}
  m(n,k,s)\le s{n-1\choose k-1}.
\end{equation}
If $n= k(s+1)$ then the right hand side of \eqref{eqemcbad} is equal to $|\aaa_k|$. For this case, the EMC was implicitly proved by Kleitman \cite{Kl}. This was extended very recently by the first author \cite{F7}, who showed that $m(n,k,s)\le {k(s+1)-1\choose k}$ for all $n\le (s+1)(k+\eps)$, where $\eps$ depends on $k$. The first author \cite{F4} also proved \eqref{eqemc0} for $$n\ge (2s+1)k-s.$$ An easy computation shows that $|\aaa|>|\aaa_k|$ already for $n\ge (k+1)s$, that is, $m(n,k,s)={n\choose k}-{n-s\choose k}$ should hold also for $(k+1)s<n<(2s+1)k-s$. The aim of the present paper is to prove the following.
\begin{thm}\label{thm1} There exists an absolute constant $s_0$, such that
\begin{equation}\label{eqemc}m(n,k,s)= {n\choose k}-{n-s\choose k}\end{equation}
holds if $n\ge \frac 53 sk-\frac 23s$ and $s\ge s_0$.
\end{thm}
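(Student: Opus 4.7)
My approach combines the classical shifting reduction with a concentration inequality, as the paper's title and abstract suggest.

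\emph{Step 1 (Shifting).} By iterating the shifts $S_{ij}$ with $i < j$, I may assume $\ff$ is left-compressed, since shifting preserves $|\ff|$ and does not increase $\nu(\ff)$. Suppose for contradiction $|\ff| > \binom{n}{k} - \binom{n-s}{k}$; the goal is to exhibit a matching of size $s+1$. Decompose $\ff = \ff^{\mathrm{g}} \sqcup \ff^{\mathrm{b}}$, where $\ff^{\mathrm{g}} = \{A \in \ff : A \cap [s] \neq \emptyset\}$ and $\ff^{\mathrm{b}} = \{A \in \ff : A \cap [s] = \emptyset\}$. Since $|\ff^{\mathrm{g}}| \le |\aaa(n,k,s)|$ automatically, the hypothesis forces $\ff^{\mathrm{b}} \neq \emptyset$.

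\emph{Step 2 (Structure of shifted families).} For a shifted $\ff$ with $\nu(\ff) \le s$, every $A \in \ff^{\mathrm{b}}$ has $\min A > s$, yet shifted-ness forces all ``down-shifts'' of $A$ replacing a large element by a small one to also lie in $\ff$. This makes the links $\ff(i) := \{B \in \binom{[n]}{k-1} : i \notin B,\ \{i\} \cup B \in \ff\}$ for $i \in [s]$ large. Simultaneously, the condition $\nu(\ff) \le s$ couples the $\ff(i)$ through cross-intersection-type constraints. The combination of these two forces will leave little room for $|\ff|$ to exceed the target.

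\emph{Step 3 (Concentration).} I would take a random equitable partition of $[n] \setminus [s]$ into $s+1$ blocks $B_0,\ldots,B_s$ of size roughly $(n-s)/(s+1)$; for $n \ge \frac{5}{3}sk - \frac{2}{3}s$ and $s$ large, each block has about $\frac{5k}{3}$ vertices. Define random variables $X_i$ counting sets of $\ff$ of the form $\{i\} \cup C$ with $C \subseteq B_i$ for $i \in [s]$, and $X_0$ counting sets of $\ff^{\mathrm{b}}$ contained in $B_0$. The expected values $\E[X_i]$ can be read off from the density of $\ff$ and from the link estimates of Step 2. Applying a Chernoff- or Azuma-type tail bound to $\sum X_i$, I would argue that with positive probability every $X_i \ge 1$; picking one set realizing each $X_i$ produces $s+1$ pairwise disjoint members of $\ff$, contradicting $\nu(\ff) \le s$.

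\emph{Main obstacle.} The crux, I expect, is matching the constant $5/3$ sharply. At the threshold, the density $|\ff|/\binom{n}{k}$ only just exceeds that of $\aaa(n,k,s)$, so both the structural bounds on $\ff^{\mathrm{b}}$ from shifted-ness and the concentration estimate must be essentially tight. The hypothesis $s \ge s_0$ presumably enters in two places: so that the deviation bound beats a union bound over $s+1$ blocks, and so that lower-order error terms of order $O(1)$ compared to $sk$ become negligible. Finding the right partition or random coloring so that both expectations and variances fall in the exact regime forcing $n \ge \frac{5}{3}sk - \frac{2}{3}s$ is what will pin down the constant.
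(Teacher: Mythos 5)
Your Step 3 has a genuine gap. You want to show that, with positive probability over a random equitable partition $[n]\setminus[s]=B_0\sqcup B_1\sqcup\cdots\sqcup B_s$, every $X_i\ge 1$ (so that one can pick a matching of size $s+1$). But that conclusion can only come from a union bound over the $s+1$ blocks, and the individual events $\{X_i=0\}$ need not be rare. Consider $\ff=\aaa(n,k,s)\cup\{E\}$ for a single edge $E$ disjoint from $[s]$: then $\ff^{\mathrm b}=\{E\}$, so $X_0\ge 1$ requires $E\subseteq B_0$, which happens with probability about $(s+1)^{-k}$ — i.e. $\Pr[X_0=0]$ is close to $1$, not small. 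This is not a degenerate case; it is exactly the boundary where $|\ff|$ just exceeds $|\aaa(n,k,s)|$, and any proof has to contend with links $\ff(i)$ of wildly unequal (and possibly near-zero) densities. No concentration bound can force a random block to catch an edge of an essentially empty link. The paper sidesteps this entirely: it never tries to build the $(s+1)$-matching probabilistically.

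The paper's actual mechanism is a counting/averaging argument, organized around the decomposition $\ff(S):=\{F\setminus S:F\in\ff,\ F\cap[s+1]=S\}$ and an induction on $k$ (base case $k=3$ from \cite{F6}). The target inequality becomes $|\ff(\{1\})|+\cdots+|\ff(\{s+1\})|+|\ff(\emptyset)|\le s\binom{n-s-1}{k-1}$, and the engine is Lemma~\ref{lem2} on cross-dependent nested families: one samples a random $t$-matching $\bb$ of $(k-1)$-sets and, for each fixed $\bb$, bounds a weighted edge count of a bipartite auxiliary graph via a size-$s$ vertex cover (König); one then averages over $\bb$. The concentration inequalities (Lemma~\ref{lemcheb}, Theorem~\ref{thmazuma}, built on eigenvalues of Kneser graphs and Azuma--Hoeffding) are used to control the distribution of $|\bb\cap\ff_{s+1}|$ so that the averaging closes — not to find a matching. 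Crucially, the constant $5/3$ is traced to a separate shadow analysis of $\ff(\emptyset)$, using that an initial $\ff$ with $\nu(\ff)\le s$ forces $\nu(\partial(\ff(\emptyset)))\le s$ (Proposition~\ref{prop21}) and then the exponential decay of $|\G_i|$ in the tail decomposition (Section~\ref{secbeta1} and the appendix). Your proposal contains neither the weighted cross-dependence lemma nor the shadow bounds, which are the two pieces that actually produce the threshold.
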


Roughly speaking, Theorem~\ref{thm1} settles the EMC for $1/3$ of the cases left over by \cite{F4}. We believe that the EMC is one the most important open problems in extremal set theory, playing a major role in several extremal problems in combinatorics. At the same time, its importance goes beyond combinatorics. As it was pointed out in \cite{aletal}, \cite{AHS}, it is deeply related to certain problems in probability theory  dealing with generalizations of Markov's inequality, as well as some computer science questions.

In the remaining part of this section we shall discuss universal bounds for the EMC improving \eqref{eqemcbad}. We discuss problems related to the EMC in Section~\ref{secappl}. In Section~\ref{prel} we give necessary preliminaries, mostly related to shifting and shadows.
In Section~\ref{secknez} we state and prove results on the concentration of intersections of families and random matchings. In Section~\ref{sec2} we give the proof of Theorem~\ref{thm1}. One of the Lemmas used in Section~\ref{sec2} has a technical proof, and it is deferred to the Appendix.

\subsection{Bounds for the Erd\H os Matching Conjecture}\label{sec15}
We have so far seen only one bound on $m(n,k,s)$: the bound \eqref{eqemcbad}. Another bound \eqref{eqemcgen2}, which works for large $s$ and fixed $k$, and which is good for matchings that are nearly perfect, is given in Section~\ref{secappl}. Exploiting the approach of Frankl \cite{F4}, Han \cite{Han16} proved the following global bound for the EMC, valid for $1<\gamma\le 2-1/k$.
\begin{equation}\label{eqemcgen}
m(n,k,s)\le {n\choose k}-{n-s\choose k}+\frac{(2-\gamma)k-1}{\gamma k-1}s{n-s-1\choose k-1} \ \ \ \ \text{for }\ \ n\ge \gamma k(s+1)+k-1.
\end{equation}
For $\gamma=2-1/k$, one recovers the original bound of Frankl \cite{F4}, while for $\gamma=1$ we get a trivial bound $m(n,k,s)\le {n\choose k}$.

In this paper, we prove the following universal bound for the EMC.

\begin{thm}\label{thmemcgen}
  Fix some $1<\gamma\le \frac 53$. Then there exists $s_0$, such that the following holds for any $s\ge s_0$. If $n\ge \gamma sk-(\gamma-1)s$ then \begin{equation}\label{eqemcgen3} m(n,k,s)\le {n\choose k}-\frac{\gamma-1}2\cdot\frac{5k-2}{\gamma k-(\gamma-1)}{n-s\choose k}.\end{equation}
\end{thm}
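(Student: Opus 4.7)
The plan is to prove Theorem~\ref{thmemcgen} by pushing through the same strategy that yields Theorem~\ref{thm1}, but keeping track of how the bound degrades as the density parameter $\gamma$ decreases from $5/3$ down to $1$. Writing $c(\gamma) := \frac{\gamma-1}{2}\cdot\frac{5k-2}{\gamma k-(\gamma-1)}$, one checks that $c(5/3)=1$ and $c(1)=0$, so Theorem~\ref{thmemcgen} interpolates between the full strength of Theorem~\ref{thm1} at $\gamma=5/3$ and the trivial bound $m(n,k,s)\le{n\choose k}$ at $\gamma=1$.

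First I would apply the standard shifting reduction so that $\ff$ is left-compressed, still satisfies $\nu(\ff)\le s$, and has the same cardinality. Decompose $\ff=\ff_1\sqcup\ff_2$ where $\ff_1:=\{F\in\ff : F\cap [s]\ne\emptyset\}\subseteq\aaa(n,k,s)$ and $\ff_2:=\{F\in\ff : F\subseteq[s+1,n]\}$. Since $|\ff_1|\le{n\choose k}-{n-s\choose k}$, the bound \eqref{eqemcgen3} reduces to proving
$$|\ff_2|\le \bigl(1-c(\gamma)\bigr){n-s\choose k}.$$

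Second, I would bound $|\ff_2|$ using the random-matching/concentration scheme developed in Section~\ref{secknez}. Sample a random matching $M=\{A_1,\ldots,A_s\}$ of pairwise disjoint $k$-sets drawn from a suitable distribution associated to $\ff$. Because $\nu(\ff)\le s$, no $B\in\ff_2$ can be simultaneously disjoint from every $A_i$ of a maximum matching $M$; consequently, if one averages over $M$, then the members of $\ff_2$ are concentrated inside the union of the $A_i$'s. Applying the concentration inequality from Section~\ref{secknez} to the random variable $|\{B\in\ff_2 : B\cap\bigcup_i A_i=\emptyset\}|$ then converts this expectation-level constraint into a pointwise bound on $|\ff_2|$. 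The hypothesis $n\ge\gamma sk-(\gamma-1)s$ enters at the point where one needs the sampled matching to leave free room of order ${n-s\choose k}$ for $\ff_2$ to live in.

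The main obstacle is extracting the precise numerical constant $\frac{5k-2}{\gamma k-(\gamma-1)}$. The numerator $5k-2$ should arise from an explicit count of the ``gain'' per $B\in\ff_2$ one gets when exchanging parts of $B$ with a matching edge, while the denominator $\gamma(k-1)+1=\gamma k-(\gamma-1)$ reflects the density of free vertices outside the matching. Tuning the distribution of $M$ so that the variance term of the concentration inequality does not eat into this constant is the delicate part, and it is the accumulated second-order error of that inequality that forces the qualitative assumption $s\ge s_0$. At $\gamma=5/3$ these numbers align to give $c=1$ and one recovers Theorem~\ref{thm1}, while for $1<\gamma<5/3$ the same calibration yields the intermediate bound \eqref{eqemcgen3}.
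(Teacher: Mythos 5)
Your proof takes a genuinely different route from the paper, and it contains a gap that I do not see how to repair.

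The paper proves Theorem~\ref{thmemcgen} not by re-running the machinery of Theorem~\ref{thm1} with degraded parameters, but by establishing a clean interpolation principle (Lemma~\ref{prop66}): if $m(n,k,xn)/{n\choose k}\le\alpha$ for all large $n$, then for any $x\le y\le 1/k$ one has $m(n,k,yn)/{n\choose k}\le\alpha+\frac{y-x}{1/k-x}(1-\alpha)$, i.e.\ the upper bound is a convex combination of the bound at $x$ and the trivial bound at $1/k$. The lemma is proved by induction on $yn$: using the Alon--Chung eigenvalue estimate \eqref{eqalch} for the Kneser graph $KG_{n,k}$, one finds a set $A\in\ff$ whose ``link'' $\ff_A=\{F\in\ff:F\cap A=\emptyset\}$ has density at least $\beta-\frac{1-\beta}{t-1}$ inside ${[n]\setminus A\choose k}$, and then applies the inductive hypothesis to $\ff_A$ on $n-k$ vertices, where the matching number has dropped by one. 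Theorem~\ref{thmemcgen} is then a one-line application of Lemma~\ref{prop66} with $x=(\frac53 k-\frac23)^{-1}$ and Theorem~\ref{thm1} as the input at $x$. In particular, Theorem~\ref{thm1} is used as a black box and the random matching / Azuma technology of Section~\ref{secknez} plays no role at this stage.

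The gap in your argument is in the very first reduction. Writing $\ff=\ff_1\sqcup\ff_2$ with $\ff_1\subseteq\aaa(n,k,s)$, you bound $|\ff_1|$ trivially by $|\aaa(n,k,s)|={n\choose k}-{n-s\choose k}$ and then assert that it suffices to prove $|\ff_2|\le(1-c(\gamma)){n-s\choose k}$. The arithmetic is fine, but the claimed bound on $|\ff_2|$ does not follow from $\nu(\ff)\le s$ alone. The constraint $\nu(\ff)\le s$ only couples $\ff_1$ and $\ff_2$ jointly; once you spend the full allowance $|\aaa(n,k,s)|$ on $\ff_1$, there is no residual constraint that forces $\ff_2$ to shrink. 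Concretely, $\ff_2\subset{[s+1,n]\choose k}$ with $\nu(\ff_2)\le s$ only tells you $|\ff_2|\le m(n-s,k,s)$, and for $\gamma<k/(k-1)$ the ground set $[s+1,n]$ has fewer than $(s+1)k$ vertices, so $m(n-s,k,s)={n-s\choose k}$ and your required inequality is simply false in that range. Even at $\gamma=5/3$, your reduction would demand $\ff_2=\emptyset$ for every $\ff$ with $\nu(\ff)\le s$, which is not a consequence of the hypotheses (one can start from $\aaa(n,k,s)$, delete some sets and add a nonempty $\ff_2$ while keeping $\nu\le s$). The concentration heuristic that you sketch for $\ff_2$ is also not a proof: averaging $|\{B\in\ff_2:B\cap\bigcup_i A_i=\emptyset\}|$ over a random $s$-matching $M$ does not yield a pointwise bound on $|\ff_2|$ unless $M$ is a maximum matching inside $\ff$, at which point the random-matching concentration machinery no longer applies. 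What actually makes the constants line up in the paper is that Lemma~\ref{prop66} never splits $\ff$ into two pieces; it drops to a link $\ff_A$ of a random $A\in\ff$ and loses only a controlled $\frac{1-\beta}{t-1}$ in density per step, and summing this loss over the $\approx(y-x)n$ steps produces exactly the linear interpolation.
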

The proof uses Theorem~\ref{thm1} as a black box, and is given in Section~\ref{secgen}. The bound \eqref{eqemcgen3} is weaker than \eqref{eqemcgen} for roughly $\gamma \le 4/3$, and is stronger for $\gamma \ge 4/3$. 
The approach that gives Theorem~\ref{thmemcgen} is potentially very useful, as it relates the problem on the number of edges in Kneser graphs and the EMC (and may be combined with any bound for $m(n,k,s)$).  This theorem has applications to Dirac thresholds (see Proposition~\ref{prop33} in Section~\ref{secappl}).
\section{Preliminaries}\label{prel}

First let us give a simple proof of \eqref{eqemcbad} in case $n=qk$, where $q$ is an integer larger than $s$. Let $[n]=A_1\sqcup\ldots \sqcup A_q$ be any {\it full partition} of $[n]$ into $k$-sets. Let $\ff\subset {[n]\choose k}$ satisfy $\nu(\ff)\le s$. Then
\begin{equation}\label{eqsimple}|\ff\cap \{A_1,\ldots, A_q\}|\le s\end{equation} should be obvious.
By the Baranyai Theorem \cite{Baran}, for $n=qk$ one can partition ${[n]\choose k}$ into ${n-1\choose k-1}$ full $k$-set partitions. Then \eqref{eqsimple} implies \eqref{eqemcbad}.

As a matter of fact, using a bit of probability one can circumvent the use of Baranyai Theorem.
Namely, choose a full partition at random from the uniform distribution over all full partitions. Then for $i\in[q]$, we have $\Pr[A_i\in \ff]=|\ff|/{n\choose k}$.
By additivity of expectation, $\mathrm E[|\{A_1,\ldots, A_q\}\cap \ff|]=q|\ff|/{n\choose k}$.
By \eqref{eqsimple}, the left hand side is never more than $s$, thus we have
$$|\ff|\le {n\choose k}\frac sq = s{n-1\choose k-1} \ \ \ \ \ \text{for }n=qk.$$
The reason that we presented this simple argument is two-fold. Firstly, it is easy to understand. Secondly, investigating the size of the intersection of a fixed family $\ff\subset{[tl]\choose l}$ with randomly chosen full partitions $\{A_1,\ldots, A_t\}$ is the main new ingredient of our proof. We present two bounds (Lemma~\ref{lemcheb} and Theorem~\ref{thmazuma}) showing that the size of this intersection is concentrated around its mean. Let us mention that the proof is due to the second author.
Both bounds exploit the eigenvalue properties of Kneser graphs via a result of Alon and Chung \cite{AC}. The first concentration result uses Chebyshev's inequality, while the second is based on the Azuma--Hoeffding inequality (\cite{Azu67}) for martingales. Hopefully, both these bounds will prove useful in other situations as well.


The main combinatorial ingredients of the proof of the main theorem (Theorem~\ref{thm1}) are related to shifting, an operation invented by Erd\H os, Ko and Rado \cite{EKR}. It was first used in the context of the EMC in \cite{Fra3}. Let us state it in the form that we are going to use it.

Let $(a_1,\ldots, a_k)$ stand for a $k$-set with $a_1<\ldots<a_k$. The so-called {\it shifting partial order} is defined on $k$-sets, and we say that $(a_1,\ldots, a_k)$ precedes $(b_1,\ldots, b_k)$ if $a_i\le b_i$ for all $i\in[k]$ and the two $k$-sets are distinct.

One can define this for unordered sets $A,B$  by simply comparing their elements after ordering them increasingly. Let $A\prec B$ denote the fact that $A$ precedes $B$ in the shifting partial order.

A family $\ff\subset {[m]\choose k}$ is called {\it initial} ({\it shifted}) if $G\prec F\in \ff$ implies $G\in \ff$.

\begin{lem}[\cite{Fra3}]\label{lemfra}
  For every family $\tilde \ff\subset {[m]\choose k},$ there is an initial family $\ff\subset{[m]\choose k}$ with $|\ff|=|\tilde \ff|$ and $\nu(\ff)\le \nu(\tilde \ff)$.
\end{lem}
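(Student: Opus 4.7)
The plan is to prove the lemma by iterating the classical shifting operation, following the approach of Erd\H os, Ko and Rado. For $1\le i<j\le m$ and any family $\G\subset\binom{[m]}{k}$, define $s_{ij}(\G)$ by replacing each $F\in\G$ satisfying $j\in F$, $i\notin F$ and $(F\setminus\{j\})\cup\{i\}\notin\G$ with $(F\setminus\{j\})\cup\{i\}$, and leaving every other set fixed. The shift is a bijection on $\G$, so $|s_{ij}(\G)|=|\G|$. The plan then has three steps: (i) show $\nu(s_{ij}(\G))\le\nu(\G)$ for any $\G$; (ii) starting from $\tilde\ff$, iterate these shifts until a stable family $\ff$ is reached; (iii) check that a family stable under all $s_{ij}$ with $i<j$ is automatically initial.

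The main content sits in (i). Starting with a matching $F_1,\ldots,F_t$ of pairwise disjoint sets in $s_{ij}(\G)$, I would produce a matching of the same size in $\G$. Every $F_\ell$ has a preimage under $s_{ij}$ which is either $F_\ell$ itself (if $s_{ij}$ was inactive on the preimage) or $F_\ell^*:=(F_\ell\setminus\{i\})\cup\{j\}$ (if $s_{ij}$ was active, which forces $i\in F_\ell$). By disjointness at most one $F_\ell$, say $F_1$, can have a preimage of the second kind. If none does, the matching already lies in $\G$. Otherwise $F_1\notin\G$ while $F_1^*\in\G$; and if no other $F_\ell$ contains $j$, the matching $F_1^*,F_2,\ldots,F_t$ works. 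The crucial subcase is when some $F_{\ell_0}$ with $\ell_0\ne 1$ contains $j$. Since $i\notin F_{\ell_0}$, the preimage of $F_{\ell_0}$ cannot be of the second kind, so $F_{\ell_0}\in\G$; and because $s_{ij}$ fixed $F_{\ell_0}$ despite $j\in F_{\ell_0}$, $i\notin F_{\ell_0}$, the set $G_0:=(F_{\ell_0}\setminus\{j\})\cup\{i\}$ must already belong to $\G$. Replacing $(F_1,F_{\ell_0})$ by $(F_1^*,G_0)$ then yields a matching in $\G$; disjointness follows from the fact that $j$ appeared only in $F_{\ell_0}$ and $i$ only in $F_1$ among the original matching, so the two swapped elements create no new collisions.

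For (ii), the potential $\Phi(\G):=\sum_{F\in\G}\sum_{x\in F}x$ is a nonnegative integer that strictly decreases at every active shift, so repeatedly applying $s_{ij}$ over all pairs $i<j$ in some order terminates at a family $\ff$ fixed by every $s_{ij}$. For (iii), suppose $G\prec F$ with $F\in\ff$; one can go from $F$ to $G$ by a sequence of elementary swaps replacing some $a$ in the current set by some $b<a$ outside it, and stability of $\ff$ under $s_{ba}$ keeps each intermediate set in $\ff$, so $G\in\ff$. The main obstacle throughout is the case analysis in step (i); the other steps are essentially bookkeeping.
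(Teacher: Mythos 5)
Your proof is correct and follows essentially the same route as the source the paper cites: the paper does not reprove Lemma~\ref{lemfra} but quotes it from \cite{Fra3}, and your three steps (the case analysis showing $\nu(s_{ij}(\G))\le\nu(\G)$, termination via the decreasing potential $\Phi$, and stability under all $s_{ij}$ implying initiality) constitute exactly that classical shifting argument, with the key exchange $(F_1,F_{\ell_0})\mapsto(F_1^*,G_0)$ handled correctly.
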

In view of this lemma, we can restrict our investigation to initial families when dealing with the EMC.

\begin{prop}\label{prop44} If $\ff\subset{[m]\choose k}$ is initial and $\nu(\ff)\le s$ then   $(s+1,2(s+1),\ldots, k(s+1))\notin\ff$. Consequently, for every $F\in \ff$, there exists some $i$, $1\le i\le k$, such that
  \begin{equation}\label{eqshift3}
    |F\cap [i(s+1)-1]|\ge i.
  \end{equation}
\end{prop}
\begin{proof}
  Define $F_i=(i,i+(s+1),\ldots, i+(k-1)(s+1))$. Should $F_{s+1}\in \ff$ hold, $F_i\prec F_{s+1}$ would imply $F_i\in \ff$ for  $1\le i\le s$ as well. However, $F_1,\ldots, F_{s+1}$ are pairwise disjoint, contradicting $\nu(\ff)\le s$. As for the ``consequently'' part, the opposite is equivalent to $(s+1,\ldots, k(s+1))\prec F.$
\end{proof}
For a family $\G\subset {[m]\choose k}$ let $\partial \G$ denote its {\it immediate shadow}:
\begin{align*}\partial \G:=\big\{H:|H|=k-1, \exists G\in \G \text{ s.t. } H\subset G\big\} \ \ \ \ \ \ \text{or, equivalently, }\ \ \ \ \ \
\partial \G:= \bigcup_{G\in \G} {G\choose k-1}.
\end{align*}


For any $S\subset[s+1]$, define the family $\ff(S)$ by
$$\ff(S):=\{F-S: F\in \ff,F\cap [s+1]=S\}.$$
We remark that, in what follows, $\partial \ff(S)$ stands for the shadow of $\ff(S)$. One of the key ingredients in \cite{F4} was
the following lemma.
\begin{lem}[\cite{F4}] If $\ff\subset {[n]\choose k}$ is initial and $\nu(\ff)\le s$, then \begin{equation}\label{eqshift2}
                        s|\partial \ff(\emptyset)|\ge |\ff(\emptyset)|.
                      \end{equation}
\end{lem}
Let us prove the following simple proposition, which, together with the lemma above, motivates the studies in the next subsection.
\begin{prop}\label{prop21} If $\ff$ is initial and $\nu(\ff)\le s$ then
  \begin{equation}\label{eqshadowmatch}
    \nu(\partial\ff(\emptyset))\le s.
  \end{equation}
\end{prop}
\begin{proof}
  Assume the contrary and let $G_1,\ldots, G_{s+1}\in \partial\ff(\emptyset)$ be pairwise disjoint. Now $G_i\in\partial\ff(\emptyset)$ implies that $G_i\cup \{x_i\}\in \ff(\emptyset)$ for some $x_i>s+1$. Then $G_i\cup \{i\}\prec G_i\cup \{x_i\}$ implies that the former is in $\ff$. Thus, we found $s+1$ pairwise disjoint sets $G_i\cup \{i\}$, $1\le i\le s+1$, all belonging to $\ff$, a contradiction.
\end{proof}

\subsection{Shadows of families satisfying $\pmb{\nu(\partial\G)\le s}$.}\label{secbeta1}
In this subsection, we work with an initial family $\G\subset{[m]\choose k}$ which satisfies $\nu(\partial \G)\le s$.
We start with the following corollary of Proposition~\ref{prop44}.
\begin{cor}\label{cor22} For every initial $\G\subset {[m]\choose k}$ such that $\nu(\partial\G)\le s$ and every $F\in \G$ there exists some $i$, $1\le i<k$, such that
   \begin{equation}\label{eqshift4}
    |F\cap [i(s+1)-1]|\ge i+1.
  \end{equation}
\end{cor}
\begin{proof}
  Remark that, provided that $j$ is the smallest element of $F$, we have $F':=F\setminus \{j\}\in \partial\ff$ and $|F'\cap [i(s+1)-1]|=\min\big\{\big|F\cap [i(s+1)-1]\big|-1, 0\big\}$. Then apply \eqref{eqshift3} to $F'$.
\end{proof}

For a set $F$, let us denote $i_F$ the largest $i$ for which \eqref{eqshift4} holds.
The  ideas below come the paper \cite{F5} due to the first author.

For each $F\in \G$ define the {\it tail} of $F$: $T(F):= F\setminus[i_F(s+1)-1]$. Let us split $\G:=\bigsqcup_{i=1}^{k-1}\G_i$, where $\G_i$ is the subfamily of all sets $F$ satisfying $i_F=i$.
Let us define the {\it restricted shadow} $$\partial_{res}\G:=\big\{F'\in \partial\G : \exists F\in \G \text{ s.t. } T(F)\subset F'\subset F\big\}.$$
\begin{lem}\label{lemgoodshadow} If $\G\subset {[m]\choose k}$ satisfies $\nu(\partial \G)\le s$ then \begin{equation}\label{eqgoodshadow}|\partial_{res}\G_i|\ge \frac{i+1}{is}|\G_i|.\end{equation}
Moreover, $\partial_{res}\G_i$ are disjoint for different $i$.
\end{lem}
Note that the coefficient $\frac{i+1}{is}$ in \eqref{eqgoodshadow} is at least $\frac{k}{(k-1)s}$, which is greater than $\frac 1s$. Using that $\G = \bigsqcup_{i=1}^{k-1}\G_i$ and the fact that $\partial_{res}\G_i$ are disjoint for different $i$, we obtain that $|\partial_{res}\G|\ge \frac{k}{(k-1)s}|\G|$. This improves the bound \eqref{eqshift2} by a little bit. To prove Theorem~\ref{thm1}, we need to analyze \eqref{eqgoodshadow} more carefully. In particular, this involves estimating the sizes of $\G_i$ (see the appendix).
 \begin{proof}[Proof of Lemma~\ref{lemgoodshadow}] Let us partition $\G:=\bigsqcup_{T\subset [m]}\G_+[T]$, where $$\G_+[T]:=\{F\in\G: T(F)=T\}.$$
 It is clear that $\G_+[T]$ indeed form a partition of $\G$.
 As earlier, we write $\partial_{res}\G_+[T]$ instead of $\partial_{res}(\G_+[T])$ for shorthand, and similarly in other situations. The next claim implies that $\partial_{res}\G_i$ are disjoint for different $i$.
 \begin{cla} We have
\begin{equation}\label{eq17}\partial_{res}\G_+[T]\cap \partial_{res}\G_+[T']=\emptyset\ \ \ \text{for} \ \ \ T\ne T'.\end{equation}
\end{cla}
\begin{proof} Take $F\in\G_+[T]$, $F'\in \G_+[T']$. The equation \eqref{eq17} is trivial if $|T|=|T'|$. Suppose that $|T'|<|T|$ and $A\in \partial_{res}\G_+[T]\cap \partial_{res}\G_+[T']$.  That is, there exist $F,F'\in\G$ such that $F\cap F'=A$, $T(F)=T$ and  $T(F')=T'$, $T,T' \subset A$. Recall that, by the definition of restricted shadow, $|A| = |F|-1 = |F'|-1$. Set $\{x\}:=F\setminus A,\ \{x'\}:=F'\setminus A$. Clearly, $x\ne x'$,  $x\in F\setminus T$ and $x' \in F' \setminus T'$. Recall that $T, T'$ consist of the last $|T|$ and $|T'|$ elements of $F$ and $F'$, respectively.

Consider an element $x''\in T'$. Note that $x''\ne x'$ and thus $x''\in A\subset F$. By definition, $x''$ is one of the $|T'|$ last elements of $F'$ and is thus one of the $|T|$ last elements of $F$. Therefore, $x''\in T$. We conclude that $T'\subsetneq T$. Together with $T\subset F'$ it implies that $i_{F'}> i_{F}$. Consequently, $F\setminus [i_{F'}(s+1)-1]\subset F\setminus [i_F(s+1)-1] = T\subset A\subset F'$. Therefore, $T'\subset F\setminus [i_{F'}(s+1)-1] \subset F'\setminus [i_{F'}(s+1)-1] = T'$, and we conclude that $F$ satisfies \eqref{eqshift3} for $i_{F'}$, a contradiction with maximality of $i_F$.\end{proof}


Define $\G_-[T]:=\big\{F\setminus T:F\in \G_+[T]\big\}$. Note that for any $T$ of size $k-i-1$ we have $i_F= i$ for any $F:$ $T(F) = T$, and thus $F\setminus T\subset [i(s+1)-1]$. Consequently, $\G_-[T]\subset {[i(s+1)-1]\choose i+1}$ and $\partial\G_-[T]\subset {[i(s+1)-1]\choose i}$. Clearly, $|\G_-[T]|=|\G_+[T]|$ and $|\partial\G_-[T]|=|\partial_{res}\G_+[T]|$.  Consider the bipartite graph between ${[i(s+1)-1]\choose i+1}$ and ${[i(s+1)-1]\choose i}$ with edges connecting pairs of sets, one of which contains the other. Via simple double counting, it is easy to see that, for any $T$ of size $k-i-1$,
\begin{eqnarray*}\notag \frac{|\partial\G_-[T]|}{{i(s+1)-1\choose i}}&\ge& \frac{|\G_-[T]|}{{i(s+1)-1\choose i+1}},\ \ \  \ \ \ \ \text{and hence}\\
|\partial_{res}\G_+[T]|&\ge& \frac{i+1}{is}|\G_+[T]|.\end{eqnarray*}
(We could have replaced $is$ by $is-1$, but this does not matter for us since we only study the case of large $s$.) Summing over all $T$ of size $k-i-1$ concludes the proof of the lemma.
\end{proof}

\section{Intersection of subsets and cliques in Kneser graphs is concentrated}\label{secknez}
Fix integers $m,l,t$, such that $m\ge tl$. Let $\G\subset {[m]\choose l}$ be a family and set  $\alpha:= |\G|/{m\choose l}$. Let $\eta$ be the random variable $|\G\cap \bb|$, where $\bb$ is chosen uniformly at random out of all $t$-matchings of $l$-sets. (More precisely, $\bb$ is taken uniformly at random from the set of all $t$-tuples of pairwise disjoint $\ell$-element sets.) Clearly, we have \begin{equation}\label{eq3}\E[\eta] = \alpha t.\end{equation}

Using the eigenvalue properties of Kneser graphs, we deduce that $\eta$ is concentrated around its mean. We present two bounds, the first one based on Chebyshev's inequality, and the second one based on the Azuma--Hoeffding inequality (\cite{Hoeff}, \cite{Azu67}).

We recall that the Kneser graph $KG_{m,l}$ is the graph on the vertex set ${[m]\choose l}$ and with the edge set formed by pairs of disjoint sets. For a family $\G\subset{[m]\choose l}$, we denote by $e(\G)$ the number of edges of $KG_{m,l}$ induced between the members of $\G$.

\begin{lem}\label{lemcheb}Suppose that $m,l,t$ are integers and $m\ge tl$. Let $\G\subset {[m]\choose l}$ be a family, and $\alpha:=|\G|/{m\choose l}$. Let $\eta$ be the random variable equal to the size of the intersection of $\G$ with a $t$-matching $\bb$ of $l$-sets,  chosen uniformly at random. Then $\E[\eta]=\alpha t$ and, for any positive $\beta$, we have \begin{equation} \Pr[|\eta-\alpha t|\ge \beta t]\le \frac{2\alpha(1-\alpha)}{\beta^2 t}.\end{equation}
\end{lem}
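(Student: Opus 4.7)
My plan is to express $\eta$ as a sum of indicator random variables and apply Chebyshev's inequality, controlling the variance via the pseudorandomness of the Kneser graph $KG_{m,l}$. Enumerate $\bb = \{B_1,\ldots,B_t\}$ and set $X_i := \mathbf{1}[B_i \in \G]$, so that $\eta = X_1 + \cdots + X_t$. By the $S_m$-symmetry of the uniform distribution on $t$-matchings of $l$-sets in $[m]$, the marginal law of each $B_i$ is uniform on $\binom{[m]}{l}$; hence $\E[X_i] = \alpha$ and $\E[\eta] = \alpha t$, which is formula \eqref{eq3}.

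For the second moment I would write
\[
\Var(\eta) = t\,\alpha(1-\alpha) + t(t-1)\,\Cov(X_1,X_2),
\]
using that by symmetry the pairwise covariance does not depend on the chosen pair. The crucial observation is that the unordered pair $\{B_1,B_2\}$ is a uniformly random edge of the Kneser graph $KG_{m,l}$; consequently
\[
\E[X_1 X_2] = \frac{e(\G)}{N}, \qquad N := \tfrac{1}{2}\binom{m}{l}\binom{m-l}{l}.
\]

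The main step --- and the only place where real work is needed --- is to bound $|e(\G)/N - \alpha^2|$ through the spectral gap of $KG_{m,l}$. This graph is regular of degree $d = \binom{m-l}{l}$, and its second-largest (in absolute value) eigenvalue is $\lambda = \binom{m-l-1}{l-1}$, so an easy computation gives $\lambda/d = l/(m-l)$. Invoking the two-sided Alon--Chung expander-mixing bound \cite{AC} --- which, via the standard decomposition $\mathbf{1}_\G = \alpha\mathbf{1} + w$ with $w\perp\mathbf{1}$, produces both the upper and the lower estimate with the $\alpha(1-\alpha)$ factor --- yields
\[
|\Cov(X_1,X_2)| \le \frac{\lambda}{d}\,\alpha(1-\alpha) = \frac{l}{m-l}\,\alpha(1-\alpha).
\]

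Finally, the hypothesis $m \ge tl$ gives $l/(m-l) \le 1/(t-1)$, so
\[
\Var(\eta) \le t\alpha(1-\alpha) + t(t-1)\cdot\tfrac{1}{t-1}\,\alpha(1-\alpha) = 2t\alpha(1-\alpha),
\]
and Chebyshev's inequality delivers the claimed tail bound. The one subtlety to watch is that I genuinely need the \emph{two-sided} form of the expander-mixing inequality with the $(1-\alpha)$ factor: the cruder version would only yield a variance bound of order $t\alpha$ and would miss the $(1-\alpha)$ in the conclusion.
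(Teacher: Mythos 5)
Your proof is correct and follows essentially the same route as the paper: the decomposition of $\eta$ into indicators, the observation that a pair from the random matching is a uniform Kneser-graph edge, the Alon--Chung bound with $\lambda/D = l/(m-l)\le 1/(t-1)$ giving $|\Cov|\le \alpha(1-\alpha)/(t-1)$, the resulting variance bound $2t\alpha(1-\alpha)$, and Chebyshev. No substantive differences to note.
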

\begin{proof}
 We have $\eta=\eta_1+\ldots+\eta_t$, where $\eta_i$ is the indicator function of the event $A_i$ that the $i$-th set in $\bb$ belongs to $\G$. Then $\Var[\eta] = \sum_{i} \Var [\eta_i]+\sum_{i\ne j}\Cov[\eta_i, \eta_j].$ It is easy to see that, since $\Pr(A_i)=\alpha$, we have $\Var[\eta_i] = \alpha(1-\alpha)$. The covariance of $\eta_i$ and $\eta_j$ for $i\ne j$ is estimated in the following proposition.

\begin{prop}\label{prop4}
For any $i,j\in [t]$, where $i\ne j$, we have $|Cov[\eta_i, \eta_j]|\le \frac{\alpha(1-\alpha)}{t-1}$.
\end{prop}
\begin{proof}
  We have $$\Cov[\eta_i,\eta_j] = \Pr(A_i\cap A_j)-\Pr(A_i)\Pr(A_j) = \Pr(A_i\cap A_j)- \alpha^2.$$
  At the same time, $\Pr(A_i\cap A_j)$ is equal to the probability that a randomly chosen edge in the Kneser graph $KG_{m,l}$ has both ends in $\G$ (Here we use that every pair of disjoint sets is contained in the same number of $t$-matchings.) Let $M:={m\choose l}$ be the number of vertices in $KG_{m,l}$ and $D := {m-l\choose l}$ be the degree of a vertex in $KG_{m,l}$. Due to regularity, the largest eigenvalue of (the adjacency matrix of) $KG_{m,l}$ is equal to $D$. Let $\lambda$ be the second-largest absolute value of an eigenvalue  of $KG_{m,l}$. It is known (see, e.g., the celebrated paper \cite{L} due to Lov\'asz) that $\lambda = {m-l-1\choose l-1}$, and thus $\frac {\lambda}{D} = \frac{l}{m-l} \le \frac 1{t-1}$. Using the result of Alon and Chung \cite{AC}, we get that the number of edges of $KG_{m,l}$ induced in $\G$ satisfies
  \begin{equation}\label{eqalch}\big|e(\G)- \frac {\alpha^2}2DM\big|\le \frac 12 \lambda \alpha(1-\alpha) M.\end{equation}
 On the other hand, we  have $\Pr(A_i\cap A_j) = \frac{e(\G)}{DM/2}$ and thus \begin{equation}\label{eqjoint}\big|\Pr(A_i\cap A_j)-\alpha^2|\le \frac{\lambda\alpha(1-\alpha)}{D}.\end{equation}
 Substituting $\lambda/D \le 1/(t-1)$, we get
 $$\big|\Cov[\eta_i,\eta_j]\big| =\big|\Pr(A_i\cap A_j)- \alpha^2\big|\le \frac{\alpha(1-\alpha)}{t-1}.$$
\end{proof}

We conclude that \begin{equation}\label{eq4}\Var[\eta] \le \alpha(1-\alpha)\Big(t+t(t-1)\cdot \frac 1{t-1}\Big) = 2t\alpha(1-\alpha).\end{equation}

Using Chebyshev's inequality, \eqref{eq3} and \eqref{eq4}, we conclude that, for any positive $\beta$, we have
\begin{equation*} \Pr[|\eta-\alpha t|\ge \beta t]\le \frac{\Var[\eta]}{\beta^2t^2} \le \frac{2\alpha(1-\alpha)}{\beta^2 t}.\end{equation*}
\end{proof}
\vskip+0.2cm

We can get a much stronger concentration result if we use martingales.
\begin{thm}\label{thmazuma}
  In the setting of Lemma~\ref{lemcheb}, we have
\begin{align}\label{eq8} \Pr\big[|\eta - \alpha t|\ge 2\beta \sqrt t\big]&\le 2 e^{-\beta^2/2}.
\end{align}
\end{thm}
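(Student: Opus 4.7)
My plan is a standard Doob-martingale argument combined with Azuma--Hoeffding, using the Kneser-graph spectral input that already underlies the proof of Lemma~\ref{lemcheb}. Let $\mathcal{F}_i := \sigma(B_1,\ldots,B_i)$ and set $Z_i := \E[\eta \mid \mathcal{F}_i]$, so $Z_0 = \alpha t$ and $Z_t = \eta$. Because, conditionally on $\mathcal{F}_i$, the remaining blocks $(B_{i+1},\ldots,B_t)$ form a uniformly random $(t-i)$-matching on $V_i := [m] \setminus \bigcup_{j\le i} B_j$ and are exchangeable, each $B_j$ (for $j>i$) is marginally a uniform random $l$-subset of $V_i$. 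Setting $\alpha_i := |\G \cap \binom{V_i}{l}| / \binom{|V_i|}{l}$, one therefore has the closed form
\[
Z_i = \sum_{j\le i}\mathbf 1[B_j\in\G] + (t-i)\,\alpha_i,
\qquad
Z_i - Z_{i-1} = \bigl(\mathbf 1[B_i\in\G] - \alpha_{i-1}\bigr) + (t-i)\bigl(\alpha_i - \alpha_{i-1}\bigr).
\]
Note that $\alpha_i$ itself is a martingale, since a direct double-counting (averaging over the uniformly random $l$-set $B_i$ removed from $V_{i-1}$) gives $\E[\alpha_i \mid \mathcal{F}_{i-1}] = \alpha_{i-1}$.

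The first summand of the increment lies in $[-1,1]$. The goal is to show that the second summand is likewise bounded by $1$, giving $|Z_i-Z_{i-1}|\le 2$; Azuma--Hoeffding would then yield
\[
\Pr\bigl[|\eta-\alpha t|\ge a\bigr] \le 2\exp\!\bigl(-a^2/(2\cdot 4 t)\bigr) = 2\exp\!\bigl(-a^2/(8t)\bigr),
\]
which for $a = 2\beta\sqrt t$ becomes exactly $2e^{-\beta^2/2}$. To control $(t-i)|\alpha_i-\alpha_{i-1}|$ I would use the same spectral input as in the proof of Lemma~\ref{lemcheb}: the Kneser graph $KG_{|V_{i-1}|,l}$ has second-eigenvalue-to-degree ratio $\lambda/D = l/(|V_{i-1}|-l) \le 1/(t-i)$, where the inequality uses $|V_{i-1}| \ge (t-i+1)l$ (a consequence of $m\ge tl$). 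Applying the Alon--Chung estimate conditionally on $\mathcal{F}_{i-1}$ should translate this spectral gap into the required bound on how much the conditional density $\alpha_i$ can swing as the choice of $B_i$ varies.

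\textbf{Main obstacle.} The heart of the argument is converting the Kneser-graph \emph{variance} bound (which sufficed for Chebyshev in Lemma~\ref{lemcheb}) into a \emph{pointwise} estimate on $(t-i)|\alpha_i-\alpha_{i-1}|$. A naive swap coupling that compares two admissible choices $B_i=X$ and $B_i=Y$ can alter up to $l$ of the remaining blocks in the coupled matching, which only produces a useless step size of order $l$. Extracting the correct $1/(t-i)$ factor requires averaging the swap coupling over the remaining random matching in a way that cancels the worst-case fluctuations against the spectral gap---this is the delicate step, while the subsequent Azuma--Hoeffding bound is then immediate.
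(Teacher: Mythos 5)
Your plan has a genuine gap: the Doob martingale you propose, $Z_i = \E[\eta \mid B_1,\ldots,B_i]$, does \emph{not} have increments bounded by a constant, so the Azuma step fails. The problem is that conditioning on the actual block $B_i$ (rather than just the indicator $\eta_i = \mathbf 1[B_i\in\G]$) reveals too much. Concretely, take $m = tl$, $\G = \binom{[m-l]}{l}$, and $B_1 = [m-l+1, m]$. Then after fixing $B_1$, every remaining block lies in $[m-l]$ and hence in $\G$, so $Z_1 = 0 + (t-1)\cdot 1 = t-1$, while $Z_0 = \alpha t = t\binom{m-l}{l}/\binom{m}{l} \approx t - l$ for large $t$. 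So $|Z_1 - Z_0| \approx l-1$, which is unbounded in $l$. Your "main obstacle" paragraph senses this, but the issue is not that extracting the $1/(t-i)$ factor is delicate --- it is that the factor simply isn't there for worst-case $B_i$: the conditional density $\alpha_i$ can jump by $\Theta(l/t)$ when you remove a single adversarial block, and the spectral gap controls only the \emph{average} swing of $d_{\G_{i-1}}(B_i)$, not the pointwise swing.

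The paper's fix is to condition on the coarser filtration generated by $\eta_1,\ldots,\eta_i$, i.e.\ only on whether each block landed in $\G$, not on which block it was. It sets $X_i := \E[\eta \mid \eta_1,\ldots,\eta_i]$ and proves (conditionally on the earlier blocks $B_1,\ldots,B_{i-1}$ being fixed, which is enough by averaging) that $\big|\E[\eta\mid\eta_i, B_1,\ldots,B_{i-1}] - \E[\eta\mid B_1,\ldots,B_{i-1}]\big|\le 2$. The computation writes $\E[\eta'\mid\eta'_i] = \eta'_i + (t'-1)\E[\eta'_t\mid\eta'_i]$ and then applies the conditional version of \eqref{eqjoint}: $\big|\Pr[A'_t\mid A'_i]-\alpha'\big|\le (1-\alpha')/(t'-1)$, so that $(t'-1)\big|\E[\eta'_t\mid\eta'_i]-\alpha'\big|\le 1$. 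Crucially, conditioning on $\eta'_i=1$ means averaging over all $B_i\in\G'$, which is exactly what turns the Alon--Chung edge-count bound into a usable increment bound; there is no pointwise analogue. Your construction up to that point (the identity $Z_i = \sum_{j\le i}\mathbf 1[B_j\in\G] + (t-i)\alpha_i$, the fact that $\alpha_i$ is itself a martingale, and the spectral ratio $\lambda/D\le 1/(t-i)$) is all correct and parallels the paper's setup, but you would need to switch to the $\eta$-filtration to salvage the argument.
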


\begin{proof} Let $X_0,\ldots, X_t$ be the following exposure martingale: $$X_i:=\E[\eta\mid\eta_1,\ldots, \eta_{i-1}].$$
In particular, $X_0 = \E[ \eta]$ and $X_t = \eta$. Let us show that $|X_i-X_{i-1}|\le 2$. We actually show that something slightly stronger holds. Assume that the choice of the first $i-1$ sets $B_1,\ldots, B_{i-1}$ in the random matching $\bb$ is fixed (and thus the choice of $\eta_1,\ldots, \eta_{i-1}$ is also fixed). We will show that
\begin{equation}\label{eq7}\big|\E[\eta\mid \eta_i,B_{i-1},\ldots, B_1]-E[\eta\mid B_{i-1},\ldots, B_1]\big|\le 2\end{equation}
for any choice of $B_1,\ldots, B_{i-1}$, where, somewhat unconventionally, conditioning on $B_j$ denotes the event that the $j$-th set of $\bb$ is $B_j$. This implies $|X_i-X_{i-1}|\le 2$. Indeed, fix a choice of $\eta_1,\ldots, \eta_{i-1}$. This choice induces a probability subspace of all choices of $\bb$, which have the fixed values of $\eta_1,\ldots, \eta_{i-1}$. This probability space can be further refined by specifying $B_1,\ldots, B_{i-1}$. Let us denote by $\mathcal B_i$ the set of all choices of ${\bf B}:=(B_1,\ldots, B_{i-1})$ which comply with the fixed choice of $\eta_1,\ldots, \eta_{i-1}$. Then we have
$$\sum_{\mathbf{B}\in \mathcal B_i} \Pr\big[{\bf B}\mid\eta_{i-1},\ldots, \eta_{1}\big]\Big(\E\big[\eta\mid\eta_i,\mathbf{B}\big]-\E\big[\eta\mid\mathbf{B}\big]\Big) = \E\big[\eta\mid\eta_i,\ldots, \eta_1\big]-\E\big[\eta\mid\eta_{i-1},\ldots, \eta_1\big].$$
Thus, if the expression in brackets on the left hand side has absolute value at most 2 (which is exactly what \eqref{eq7} states), then the right hand side has absolute value at most $2$, which is in turn equivalent to $|X_i-X_{i-1}|\le 2$.

Next, we prove \eqref{eq7}. Fix $B_1,\ldots, B_{i-1}$ and consider the Kneser graph $KG_{m',l}$ on $Y':=[m]\setminus \bigcup_{j=1}^{i-1}B_j$. Here, $|Y'|=m' \ge t'l$ with $t':=t-i+1$. Put $\G':= \G\cap {Y'\choose l}$ and $\alpha':=|\G'|/{m'\choose l}$. Put $\eta'=\eta'_i+\ldots+\eta'_t$ to be equal to the intersection of $\G'$ with a randomly chosen $t'$-matching $\bb'$ of $l$-sets in $Y'$, where $\eta'_j$ and the corresponding even $A'_j$ are defined analogously to $\eta_j,\ A_j$.

Then, clearly, $\E[\eta'\mid\eta'_i] = \E[\eta\mid\eta_i,B_{i-1},\ldots, B_1]$ and $\E[\eta'] = \E[\eta\mid B_{i-1},\ldots, B_1]$. We have $E[\eta']=\alpha't'$. At the same time, $$\E[\eta'\mid \eta'_i] =\eta'_i+ \sum_{j=i+1}^t \E[\eta'_j\mid \eta'_i] = \eta'_i+(t'-1)\E[\eta'_t\mid \eta'_i].$$
In order to prove $|X_i-X_{i-1}|\le 2$, we need to show that for both $\eta'_i=0$ and $\eta'_i=1$ the value of the last expression is between $\alpha't'-2$ and $\alpha't'+2$. Let us first consider the case $\eta'_i=1$.
$$\E[\eta'_t\mid \eta'_i=1] = \E[\eta'_t\mid A'_i] = \Pr[A'_t\mid A'_i]=\frac{\Pr[A'_t\cap A'_i]}{\Pr[A'_i]}.$$
Using \eqref{eqjoint}, we conclude that the following holds.
\begin{equation}\label{eqmart}\big|\E[\eta'_t\mid A'_i]-\alpha'\big |\le \frac{\lambda'(1-\alpha')}{D'}\le \frac{(1-\alpha')}{t'-1},\end{equation}
where $D'$ and $\lambda'$ are the degree and the second-largest absolute value of an eigenvalue of $KG_{m',l}$, respectively. Indeed, as before, we have $\frac{\lambda'}{D'} \le \frac 1{t'-1}$. Therefore, we conclude that in the case $\eta'_i=1$
$$\big|\E[\eta'\mid\eta'_i=1]-\alpha't'\big| \le 1-\alpha'+\big|(t'-1)\E[\eta'_t\mid A'_i]-\alpha'(t'-1)\big|\overset{\eqref{eqmart}}{\le} 2-2\alpha'\le 2.$$
Similarly, we can obtain
$$\E[\eta'_t\mid \eta'_i=0] = \frac{\Pr[A'_t]-\Pr[A'_t\cap A'_i]}{1-\Pr[A'_i]} = \frac{\alpha'-\Pr[A'_t\cap A'_i]}{1-\alpha'}.$$
Using \eqref{eqjoint}, we get that
$$\big|\E[\eta'_t\mid\eta'_i=0]-\alpha'\big|\le \frac{\lambda'\alpha'}{D'} \le \frac{\alpha'}{t'-1}$$ and, doing the same calculations as before, we infer that
$$|\E[\eta'\mid\eta'_i=0]-\alpha't'|\le 2\alpha'\le 2.$$

Thus, we can apply the Azuma--Hoeffding inequality to $X_0,\ldots, X_t$ and conclude that \eqref{eq8} holds. Note that we have $2\beta$ instead of $\beta$ in \eqref{eq8} due to the fact that $X_i$ are $2$-Lipschitz.
\end{proof}

For technical reasons, in case when $\alpha$ is small, we will need to compare the probability that $X_t$ got ``very big'' and the probability that it got ``just big''. Below we give a proposition that formalises this.

\begin{prop}\label{propazumastupid}In the notations above, assume that $\E[\eta]=\alpha t$. Fix a constant $0<C\le 1/2$ such that $C\ge \alpha$ and $ C^2t\ge 16$. Then \begin{equation}\label{eq10}\Pr[\eta\ge 4C t]\le 2e^{-\frac{C^2t}{2}}\Pr\big[|\eta-2Ct|\le Ct\big].\end{equation}
\end{prop}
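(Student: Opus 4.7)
The plan is to apply a first-passage stopping-time argument to the exposure martingale $X_0=\alpha t, X_1,\ldots,X_t=\eta$ from the proof of Theorem~\ref{thmazuma}, which has $|X_i-X_{i-1}|\le 2$ almost surely. Define
\[\tau:=\min\{i:X_i\ge 2Ct-2\},\]
with $\tau=\infty$ if no such $i$ exists. The $2$-Lipschitz property forces $X_\tau\in[2Ct-2,2Ct)$ on $\{\tau\le t\}$, and the event $\{\eta\ge 4Ct\}$ lies inside $\{\tau\le t\}$ since $\eta=X_t\ge 4Ct>2Ct-2$ automatically triggers the stopping criterion by time $t$.

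For the upper bound on $\Pr[\eta\ge 4Ct]$ I would condition on the history up to time $\tau$. The continuation $X_\tau,X_{\tau+1},\ldots,X_t$ is itself a martingale with step size at most $2$ and at most $t-\tau\le t$ steps; on $\{\eta\ge 4Ct\}$ we have $X_t-X_\tau\ge 4Ct-2Ct=2Ct$, so the one-sided Azuma--Hoeffding inequality applied to this shorter continuation martingale gives
\[\Pr\bigl[X_t-X_\tau\ge 2Ct\,\big|\,\tau,X_\tau\bigr]\le e^{-(2Ct)^2/(8(t-\tau))}\le e^{-C^2t/2}.\]
Integrating over the conditioning yields $\Pr[\eta\ge 4Ct]\le e^{-C^2t/2}\Pr[\tau\le t]$.

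For the lower bound on $\Pr[|\eta-2Ct|\le Ct]$ I would note that on $\{\tau\le t\}$, whenever $|X_t-X_\tau|\le Ct-2$ holds, $\eta=X_t\in[X_\tau-Ct+2,X_\tau+Ct-2]\subset[Ct,3Ct]$. Two-sided Azuma on the same continuation then gives
\[\Pr\bigl[|X_t-X_\tau|>Ct-2\,\big|\,\tau,X_\tau\bigr]\le 2e^{-(Ct-2)^2/(8t)}.\]
The assumptions $C\le 1/2$ and $C^2t\ge 16$ imply $Ct\ge 32$, and a short calculation gives $(Ct-2)^2/(8t)\ge C^2t/8-C/2\ge 2-1/4=7/4>\ln 4$, so this conditional probability is at most $1/2$. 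Therefore $\Pr[|\eta-2Ct|\le Ct]\ge \tfrac12\Pr[\tau\le t]$, and combining with the upper bound delivers the proposition.

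The main obstacle is not conceptual, as the argument is a standard first-passage decomposition; rather it is the numerical bookkeeping. The choice of stopping threshold $2Ct-2$ (not $2Ct$) and of deviation $Ct-2$ (not $Ct$) is forced by the $2$-Lipschitz jumps of the martingale and the possibility of overshooting the stopping threshold by up to $2$, and these $-2$ corrections must be absorbed into the Azuma exponents while still preserving the precise factor $e^{-C^2t/2}$ on the upper-bound side and the constant $1/2$ on the lower-bound side. The quantitative hypothesis $C^2t\ge 16$ is exactly what is needed to make the latter go through.
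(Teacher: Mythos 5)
Your argument is correct and is essentially the paper's own proof: the paper introduces the same first-passage time (its $\rho$ is your $\tau$, with the same threshold $2Ct-2$ forced by the $2$-Lipschitz steps), applies Azuma to the continuation martingale one-sidedly for the event $\{\eta\ge 4Ct\}$ and two-sidedly to show the window $|\eta-2Ct|\le Ct$ is hit with conditional probability at least $1/2$, and combines these exactly as you do. The only difference is organizational (you bound $\Pr[\eta\ge 4Ct]$ and $\Pr[|\eta-2Ct|\le Ct]$ against $\Pr[\tau\le t]$ separately, while the paper bounds the ratio of the two conditional probabilities given $\rho=i$ and sums), with equivalent numerical bookkeeping.
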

\begin{proof} Let $\rho$ be the random variable, which is equal to $i$ in case $i$ is the first step at which $X_i\ge 2Ct-2$. If there is no such step, then put $\rho := -1$. Note that, since $X_0,\ldots, X_t$ form a $2$-Lipschitz martingale, we have $2Ct-2\le X_i< 2Ct$ for $i$ defined as above. Moreover, if the value of $X_t$ exceeds $4Ct$, then $2Ct-2\ge Ct\ge\alpha t$ and the value of $X_i$ must become bigger than $2Ct-2$ at some step and so $\rho$ is assigned an integer from $0$ to $t$.
\begin{equation}\label{eq9}\Pr[X_t\ge 4C t]=\sum_{i=0}^t\Pr[X_t\ge 4Ct\mid \rho=i]\Pr[\rho=i].\end{equation}
Let us bound the following related quantity:
$$\Pr\big[|X_t-2Ct+1|\le Ct-1\mid\rho = i\big].$$ The sequence $X_i,\ldots, X_t$, conditioned on $\rho = i$, is a 2-Lipschitz martingale with the expected value lying between $2Ct-2$ and $2Ct$. Therefore, we can apply the Azuma--Hoeffding inequality to this martingale and conclude that
\begin{eqnarray*}\Pr\big[X_t-(2Ct-1)\ge 1+2\beta \sqrt {t-i}\mid\rho=i\big]&\le &  e^{-\beta^2/2},\\
\Pr\big[X_t-(2Ct-1)\le -1-2\beta \sqrt {t-i}\mid\rho=i\big]&\le & e^{-\beta^2/2}.\end{eqnarray*}
Using these inequalities with $t-i$ bounded from above by $t$ and $\beta=Ct^{1/2}$ and  $\frac{Ct-2}{2t^{1/2}}$ for the numerator and denominator, respectively, we get
$$\frac{\Pr[X_t\ge 4Ct\mid \rho=i]}{\Pr[|X_t-2Ct|\le Ct\mid\rho = i]}\le\frac{\Pr[X_t\ge 4Ct\mid \rho=i]}{\Pr[|X_t-2Ct-1|\le Ct-1\mid\rho = i]}\le  \frac{e^{-\frac{C^2t}{2}}}{1-2e^{-\frac{t(C-2/t)^2}{8}}}\le 2e^{-\frac{C^2t}{2}},$$
where the last inequality holds since $t(C-2/t)^2\ge \frac 34 C^2t\ge 12$ and $2e^{-12/8}\le 1/2$.

 Using this bound and the fact that $\eta = X_t$, we can continue \eqref{eq9} as follows:
\begin{equation*}\Pr[\eta\ge 4C t]\le 2e^{-\frac{C^2t}{2}}\sum_{i=0}^t\Pr\big[|\eta-2Ct|\le Ct\mid\rho=i\big]\Pr[\rho=i] \le 2e^{-\frac{C^2t}{2}}\Pr\big[|\eta-2Ct|\le Ct\big].\end{equation*}
\end{proof}

\section{Proof of Theorem~\ref{thm1}}\label{sec2}

For convenience, we prove Theorem~\ref{thm1} in the following, slightly modified form.
\begin{thm}\label{thm1v1} For any $\eps>0$ there exists $s_0=s_0(\eps)$, such that for any $s\ge s_0$ and $n\ge s+(1.666+\eps)s(k-1)$ the conclusion of Theorem~\ref{thm1} is valid.
\end{thm}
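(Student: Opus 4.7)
The plan is proof by contradiction. Suppose $\ff\subset{[n]\choose k}$ is initial (by Lemma~\ref{lemfra}) with $\nu(\ff)\le s$ and $|\ff|\ge{n\choose k}-{n-s\choose k}+1$; the goal is to derive a contradiction provided $n\ge s+(1.666+\eps)s(k-1)$ and $s\ge s_0(\eps)$.

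First, reduce to controlling the size of $\G:=\ff(\emptyset)\subset{[s+2,n]\choose k}$. Writing $\ff=\bigsqcup_{S\subset[s+1]}\ff(S)$ and noting that sets intersecting $[s]$ contribute at most ${n\choose k}-{n-s\choose k}$ edges, the hypothesis $|\ff|>|\aaa(n,k,s)|$ forces $|\G|$ (together with $|\ff(\{s+1\})|$, which by initiality is dominated by $|\G|$) to be at least an explicit positive quantity. Proposition~\ref{prop21} then gives $\nu(\partial\G)\le s$, and $\G$ itself is initial.

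The main argument combines the tail stratification of Section~\ref{secbeta1} with the concentration machinery of Section~\ref{secknez}. Partition $\G=\bigsqcup_{i=1}^{k-1}\G_i$ according to the maximal $i$ for which \eqref{eqshift4} holds; the restricted-shadow inequality \eqref{eqgoodshadow} together with the disjointness \eqref{eq17} yields
\begin{equation*}
|\partial\G|\ge\sum_{i=1}^{k-1}\frac{i+1}{is}\,|\G_i|.
\end{equation*}
The concentration inequalities of Section~\ref{secknez}, applied both to $\G$ via random $k$-matchings of $[s+2,n]$ and, crucially, to $\partial\G$ via random $(k-1)$-matchings of the corresponding ground set, upgrade the deterministic bounds $|\G\cap\bb|\le s$ and $|\partial\G\cap\bb'|\le s$ to sharp density bounds with error terms of size $e^{-\Omega(s)}$, negligible for $s\ge s_0(\eps)$. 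The low-density regime is handled through Proposition~\ref{propazumastupid}. Plugging the upper bound on $|\partial\G|$ back into the stratum-weighted shadow inequality produces an upper bound of the shape
\begin{equation*}
|\G|\le\Bigl(\tfrac{3}{5}+o(1)\Bigr)\,s\,{n-s-2\choose k-1}
\end{equation*}
in the critical regime $n\approx s+\tfrac{5}{3}s(k-1)$, strictly improving on the classical bound \eqref{eqshift2}. The technical orchestration of this simultaneous application of concentration, stratum by stratum and to the shadow, is the content of the technical Lemma~\ref{lem2} whose proof is deferred to the appendix.

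Combining this upper bound on $|\G|$ with the lower bound extracted in the first step produces the contradiction. The threshold $5/3$ emerges from balancing the critical stratum weight $\tfrac{i+1}{i}\big|_{i=1}=2$ against the density constraint dictated by the concentration on $\partial\G$; the $\eps$-slack absorbs the Azuma-Hoeffding error terms as well as the $o(1)$ losses from the ${n-s-1\choose k-1}/{n-s-2\choose k-1}$ comparisons. The principal obstacle is Lemma~\ref{lem2} itself: one must run Azuma-Hoeffding estimates simultaneously across all $k-1$ strata, keep the exponential error terms genuinely negligible against the combinatorial gains (which forces the hypothesis $s\ge s_0(\eps)$), and invoke Proposition~\ref{propazumastupid} wherever the plain Azuma bound is too weak. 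A secondary technical point is to verify that the restrictions $\ff(S)$, the shadows $\partial_r(\G_i)$, and the sub-chunks $\G_+(T)$ all remain initial in their respective ground sets, so that the stratification and shifting-based inequalities continue to apply at each level of the argument.
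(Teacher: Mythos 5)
There is a genuine gap, and it sits at the very start of your plan. Your contradiction setup only extracts a vacuous lower bound: since every $k$-set meeting $[s]$ is counted by $|\aaa(n,k,s)|$, the assumption $|\ff|>{n\choose k}-{n-s\choose k}$ merely forces $|\ff(\{s+1\})|+|\ff(\emptyset)|\ge 1$, not ``an explicit positive quantity'' that an upper bound on $|\ff(\emptyset)|$ could contradict. The actual difficulty is that the traces $\ff(\{1\}),\ldots,\ff(\{s\})$ may each be as large as ${n-s-1\choose k-1}$ (they are full in the extremal family), so the excess cannot be localized in $\ff(\emptyset)$; what must be proved is the joint inequality \eqref{eq27}, $\sum_{i=1}^{s+1}|\ff(\{i\})|+|\ff(\emptyset)|\le s{n-s-1\choose k-1}$, and the only leverage for it is that $\ff(\{1\}),\ldots,\ff(\{s+1\})$ are cross-dependent and nested. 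Your proposal never uses cross-dependence of these $s+1$ traces, and without it there is no route from a bound on $\ff(\emptyset)$ (or on $\partial\ff(\emptyset)$) to a bound on $|\ff|$.

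Relatedly, the concentration step as you describe it cannot deliver what you claim. Applying Theorem~\ref{thmazuma} to $\G=\ff(\emptyset)$ or to $\partial\G$ together with the deterministic facts $|\G\cap\bb|\le s$, $|\partial\G\cap\bb'|\le s$ gives nothing beyond the averaging bound \eqref{eqemcbad}: the expectation already yields $\alpha t\le s$, and concentration around that mean cannot ``upgrade'' a pointwise bound into a sharper density bound such as $|\G|\le(\tfrac35+o(1))s{n-s-2\choose k-1}$. In the paper, concentration enters only inside Lemma~\ref{lem2}: the weighted K\"onig-type cover bound of Lemma~\ref{lem3} is a nonlinear function of $q_2=|\bb\cap\ff_{s+1}|$ (see \eqref{eq1}--\eqref{eq2}), and one needs $q_2$ to be concentrated near $\alpha t$ (Theorem~\ref{thmazuma}, with Proposition~\ref{propazumastupid} for small $\alpha$) so that the negative correction terms survive the averaging; the strata $\G_i$ of Section~\ref{secbeta1} appear only in the purely deterministic appendix estimate (Lemma~\ref{lembeta}), not in any simultaneous Azuma argument. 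Finally, your outline omits two structural ingredients the proof cannot do without: the induction on $k$ (the EMC for $k-1$ is what bounds the shadow density $\alpha$ in \eqref{eqcond1}) and the reduction, via $m(n,k,s)\le m(n-1,k,s)+m(n-1,k-1,s)$, to the boundary case $n=s+(1.666+\eps)s(k-1)$; the dichotomy between \eqref{cond1} and \eqref{cond2} in Lemma~\ref{lemtobeta} is where the constant $5/3$ actually arises, not from the $i=1$ stratum weight.
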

Theorem~\ref{thm1v1} easily implies Theorem~\ref{thm1}. Indeed, we only have to choose $\epsilon<5/3-1.666$ and apply Theorem~\ref{thm1v1}. The rest of the section is concerned with the proof of Theorem~\ref{thm1v1}.

We prove Theorem~\ref{thm1v1} by induction on $k$. The case $k=3$ is verified by the first author in \cite{F6}. Using shiftedness, it is easy to obtain the formula $$m(n,k,s)\le m(n-1,k,s)+m(n-1,k-1,s),$$ valid for any $n\ge (s+1)k$. At the same time, we have $${n\choose k}-{n-s\choose k}= {n-1\choose k}-{n-s-1\choose k}+{n-1\choose k-1}-{n-s-1\choose k-1}.$$
(It is important to note that in the above recursions $n$ and $k$ change but $s$ is fixed. This is essential because we only prove Theorems~\ref{thm1} and~\ref{thm1v1} for $s>s_0$.)
Consequently, if we proved the EMC for $k$-uniform families and $n=s+(1.666+\eps)s(k-1)$, then, using the inductive hypothesis for $(k-1)$-uniform families and the formulas above, we can conclude that the EMC is valid for any $n\ge s+(1.666+\eps)s(k-1)$. Therefore, we only need to prove the EMC for $n=s+(1.666+\eps)s(k-1)$. (Note that we omit integer parts when they are unimportant.)

Recall that families $\ff_1,\ldots, \ff_{s+1}$ are {\it cross-dependent}, if there are no $F_1\in\ff_1,\ldots, F_{s+1}\in \ff_{s+1}$ such that $F_1,\ldots, F_{s+1}$ are pairwise disjoint. We say that $\ff_1,\ldots, \ff_{s+1}$ are {\it nested} if $\ff_1\supset \ff_2\supset\ldots\supset\ff_{s+1}$.
The following somewhat technical lemma is the key ingredient in the proof. It resembles
\cite[Theorem 3.1]{F4}, but the analysis is more complicated. Theorem~\ref{thmazuma} plays a crucial role in the proof, providing us with much more control over the situation than a simpler averaging argument used in \cite{F4}.

\begin{lem}\label{lem2} For any $\epsilon>0$ there exists $s_0\in \mathbb N$, such that the following holds for any $s\ge s_0$. Let $\ff_1,\ldots, \ff_{s+1}\subset {Y\choose l}$ be cross-dependent and nested, and suppose that $|Y|\ge tl$ for some $t\in \mathbb N$. If for some $x,q$ with $x,q\le s+1$ and $\alpha\in[0,1]$ we have $|\ff_{s+1}|=\alpha {|Y|\choose l}$,  $(\alpha+\epsilon) t\le \frac{sx}q$, and $t\ge s+x+1$, then
\begin{equation}\label{eqlem1} |\ff_1|+|\ff_2|+\ldots+|\ff_s|+q|\ff_{s+1}|\le s{|Y|\choose l}.\end{equation}
\end{lem}
We defer its proof to the next subsection and first finish the proof of Theorem~\ref{thm1v1}.
Recall that, for a subset $S\subset [s+1]$, we use the  notation $\ff(S):=\{F-S:F\in\ff, F\cap [s+1]=S\}$.
The next lemma translates the statement of Lemma~\ref{lem2} to our situation.
\begin{lem}\label{lemtobeta} Fix $c=1.666$. For any $\eps>0$ there exists $s_0\in \mathbb N$, such that the following holds for any $s\ge s_0$. Put $n= s+(c+\eps)s(k-1)$ and consider an initial family $\ff\subset{[n]\choose k}$ satisfying $\nu(\ff)\le s$.   Assume that $q'|\partial\ff(\emptyset)|=|\ff(\emptyset)|$, and let $\lambda, \beta>0$ be such that $|\partial\ff(\emptyset)|= \lambda{n-s-1\choose k-1}$ and  $|\ff(\emptyset)|= \beta{n-s-1\choose k}$. Then the EMC is true, provided that at least one of the following inequalities holds:
\begin{align}\label{cond1} \lambda\le& \frac{s(c-1)}{q'c}\ \ \ \  \text{or}\\
\label{cond2} \beta\le& \frac{(c-1)k}{c^2(k-1)}-\eps.
\end{align}
\end{lem}
 We could have provided a more concise statement, since giving bounds on both $\lambda$ and $\beta$ in the statement is redundant (the parameters are interconnected via $q'$). However, this form of the statement illuminates the actual logic of the proof.

\begin{proof} To prove the lemma, it is sufficient to show that
\begin{equation}\label{eq27}|\ff(\{1\})|+\ldots+|\ff(\{s+1\})| +|\ff(\emptyset)|\le s{n-s-1\choose k-1}\end{equation}
holds. Indeed, for any subset $S\subset [s+1]$, $|S|\ge 2$, we have $\aaa(S)\supset \ff(S)$, since the former contains all possible such sets. The inequality \eqref{eq27}, in turn, gives that $$\sum_{S\subset[s+1]: |S|\in\{0,1\}}|\aaa(S)|\ge \sum_{S\subset[s+1]: |S|\in\{0,1\}}|\ff(S)|.$$
In total, this gives $|\aaa|\ge |\ff|$.

Thus, our main task is to verify \eqref{eq27}. Fix some $\eps'$, $0<\eps'<\eps$, which choice would be clear later.  Assume that $|\ff(s+1)|= \alpha{n-s-1\choose k-1}$. As $\ff(\emptyset)\subset \ff$, we clearly have $\nu(\ff(\emptyset))\le s$, and thus, using \eqref{eqshift2}, we conclude that $q'\le s$. We apply Lemma~\ref{lem2} with $\epsilon:=\eps'$, $l:=k-1$ and $q:=1+q'\frac \lambda\alpha$ to $\ff(\{1\}),\ldots,\ff(\{s+1\})$. Since $\ff$ is initial, these families are nested. Also, these families are cross-dependent: otherwise, if $F_i\in \ff(\{i\})$ are pairwise disjoint, then $F_i\cup\{i\}$ form an $(s+1)$-matching in $\ff$.

Moreover, we have  $q|\ff(\{s+1\})|= |\ff(\{s+1\})|+q'\frac \lambda\alpha|\ff(\{s+1\})| = |\ff(\{s+1\})|+q'|\partial\ff(\emptyset)|= |\ff(\{s+1\})|+|\ff(\emptyset)|$. Therefore, inequality  \eqref{eqlem1} with these parameters implies \eqref{eq27}.

To see that \eqref{eqlem1} holds, we need to verify that the assumptions on $\alpha, t,x$ from Lemma~\ref{lem2} are satisfied. Put $Y:=[s+2,n]$. We have $|Y|=n-s -1= (c+\eps)s(k-1)-1$, so we may put $t:=(c+\eps)s-1$, $x:=(c-1+\eps)s-2$ in order to satisfy the inequality $t\ge x+s+1$. Thus, we are left to verify that for some positive $\eps'$
$$q(\alpha+\eps')\big((c+\eps)s-1\big)\le s\big((c-1+\eps)s-2).$$
It is easy to see that one can find positive $\eps'=\eps'(\eps)$ and a sufficiently large $s_0$ so that the above holds for $s\ge s_0$, provided that
\begin{equation*}(q-1)\alpha cs\le s^2(c-1)\end{equation*} holds. Note that $(q-1)\alpha = q'\lambda$, and thus the displayed inequality is equivalent to \eqref{cond1}. (Note that we simply discarded constants and epsilons and replaced $q$ with $q-1$, which is possible since the inequality above is non-trivial only if $q>(c-1)s/c$ and thus $(q-1)/q$ can be made as close to $1$ as is necessary.)
On the other hand, by the definition of $\lambda, \beta,q'$ we have $\lambda q' = \frac{|\ff(\emptyset)|}{{n-s-1\choose k-1}}$, and thus $\beta = \frac{|\ff(\emptyset)|}{{n-s-1\choose k}}= \lambda q'\frac{k}{n-s-k}\ge \frac{\lambda q'k}{(c+\varepsilon)s(k-1)}$. Thus, the inequality \eqref{cond1} is implied by
\begin{equation*} (c+\eps)s\beta\frac{k-1}k\le s\frac{c-1}c \ \ \ \ \ \ \Leftarrow \ \ \ \ \beta\le \frac{(c-1)k}{c^2(k-1)}-\eps.\end{equation*}
(Note that we use $(c+\eps)/c\le (\beta+\eps)/\beta$.) The last condition is exactly \eqref{cond2}.\end{proof}

To complete the proof of Theorem~\ref{thm1v1}, we need to find good bounds on either $\alpha$ or $\beta$. This is done in the following lemma, whose proof is deferred to the appendix.

\begin{lem}\label{lembeta}
  For $c=1.666$ either \eqref{cond1} or \eqref{cond2} is valid.
\end{lem}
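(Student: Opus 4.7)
The plan is to prove Lemma~\ref{lembeta} by combining the tail decomposition of Section~\ref{secbeta1} with the matching-intersection concentration inequality of Theorem~\ref{thmazuma} applied to $\partial(\ff(\emptyset))$. Set $m := n-s-1 \approx (c+\eps)s(k-1)$. By Proposition~\ref{prop21}, $\nu(\partial(\ff(\emptyset))) \le s$, and since $\ff$ is initial, Corollary~\ref{cor22} yields the decomposition $\ff(\emptyset) = \bigsqcup_{i=1}^{k-1} \G_i$, where every $F \in \G_i$ has exactly $i+1$ elements in $[i(s+1)-1]$, is disjoint from $[i(s+1), (i+1)(s+1)-1]$, and has its remaining $k-i-1$ elements (the tail) in $[(i+1)(s+1), m]$. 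Summing the restricted-shadow inequality \eqref{eqgoodshadow} over $i$ and using the disjointness of restricted shadows \eqref{eq17} gives $|\partial(\ff(\emptyset))| \ge \sum_{i=1}^{k-1} \tfrac{i+1}{is}|\G_i|$.

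Next, I would bound $|\partial(\ff(\emptyset))|$ from above via Theorem~\ref{thmazuma}. Picking a uniformly random $t$-matching $\bb$ of $(k-1)$-sets in $[m]$ with $t = \lfloor m/(k-1)\rfloor \approx (c+\eps)s$ and setting $\alpha := |\partial(\ff(\emptyset))|/\binom{m}{k-1}$, the intersection $\eta := |\bb\cap\partial(\ff(\emptyset))|$ has mean $\alpha t$ but is $\le s$ deterministically because $\nu(\partial(\ff(\emptyset))) \le s$. Taking the deviation parameter $\beta$ of order $\sqrt{\log s}$ in \eqref{eq8} then forces $\alpha \le 1/c + o(1)$, so that $\sum_i \tfrac{i+1}{is}|\G_i| \le (1/c + o(1))\binom{m}{k-1}$.

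The remaining step is to convert this weighted inequality into the bound $|\ff(\emptyset)| \le \tfrac{s(c-1)}{c}\binom{m}{k-1}$ demanded by \eqref{cond1}. For $i$ near $1$, the weight $(i+1)/i \ge 3/2$ in the summand already makes the weighted bound tight enough. For $i$ near $k-1$, a crude count $|\G_i| \le \binom{i(s+1)-1}{i+1}\binom{m-(i+1)(s+1)+1}{k-i-1}$ gives a density in $\binom{m}{k}$ that is geometrically small for $c > 1$. The difficulty lies in intermediate $i$, where I would invoke the full matching constraint $\nu(\ff) \le s$ (which is strictly stronger than $\nu(\partial(\ff(\emptyset))) \le s$): for each fixed tail $T$, the link family $\h_i(T) := \{H \subset [i(s+1)-1] : H \cup T \in \G_i\}$ participates in a cross-dependent configuration in the spirit of Lemma~\ref{lem2}, since elements of a head may be shifted to elements of $[s+1]$. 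A further application of Theorem~\ref{thmazuma} or Proposition~\ref{propazumastupid} to these link families should then yield a sharper per-$i$ bound on $|\G_i|$, and summing produces \eqref{cond1}.

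The main obstacle, and presumably the reason the full proof is relegated to the appendix, is the quantitative tightness demanded at the threshold $c = 1.666$: constants cannot be lost at any individual $i$, so the various concentration inputs (on the global shadow and on the link families) must be delicately balanced against the crude counts for large $i$ and the shadow weights for small $i$.
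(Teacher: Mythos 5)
Your sketch correctly identifies the shadow decomposition $\ff(\emptyset)=\bigsqcup_i\G_i$ and the weighted shadow inequality $|\partial(\ff(\emptyset))|\ge\sum_i\frac{i+1}{is}|\G_i|$, but it diverges from the paper in a way that creates a genuine quantitative gap.

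The key step the paper uses to bound $\alpha$ is not concentration at all but the \emph{induction hypothesis}: since $\nu(\partial(\ff(\emptyset)))\le s$ and the ground set $[s+2,n]$ has size $m=n-s-1>s+(c+\eps)s(k-2)$, the EMC applied to $(k-1)$-sets gives $|\partial(\ff(\emptyset))|\le\binom{m}{k-1}-\binom{m-s}{k-1}$, which yields $\alpha\le 1-e^{-\frac{k-1}{c(k-1)-1/2}}\approx 1-e^{-1/c}\approx 0.451$ for $c=1.666$. Your replacement — applying Theorem~\ref{thmazuma} to a random $t$-matching against $\partial(\ff(\emptyset))$ with $t\approx cs$ — only uses $\eta\le s$ deterministically, so it gives $\alpha t=\E[\eta]\le s$, i.e.\ $\alpha\le 1/c\approx 0.6$. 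Note that this follows from linearity of expectation alone; the Azuma machinery buys nothing here, and the resulting bound is strictly weaker ($1-e^{-1/c}<1/c$). This matters: with $\alpha\le 0.451$ one needs to handle only $q'\ge 0.887s$ via the ``$\ff(\emptyset)$ is small'' branch, whereas with $\alpha\le 0.6$ one must handle $q'\ge 0.667s$, and the exponential decay of $\varphi(k,i,c)$ in \eqref{eqbound} is not strong enough to close that much larger range at $c=1.666$. In other words, the constant would not work with your looser estimate.

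Your proposed treatment of intermediate $i$ via ``cross-dependent link families'' is also not what the paper does. The paper simply uses the crude volume bound $|\G_i|\le\binom{i(s+1)-1}{i+1}\binom{m+1-(i+1)(s+1)}{k-i-1}$ (giving $\varphi(k,i,c)$), parametrizes by $\rho_i:=|\G_i|/(\binom{m}{k}\varphi(k,i,c))$, reformulates the two branches as \eqref{eqcond1}/\eqref{eqcond2} and \eqref{eqcond3}, and then carries out a careful numerical verification (Mathematica for $k\le 2\cdot 10^4$, plus a stability argument for larger $k$) showing the two regimes always overlap. Your cross-dependence idea is speculative and unsubstantiated; it is possible it could be made to work, but as written it is not an argument, and the threshold $c=1.666$ is exactly where hand-waving fails. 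The cited obstacle (``quantitative tightness'') is real, and your outline does not resolve it.
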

This lemma, combined with Lemma~\ref{lemtobeta}, concludes the proof of Theorem~\ref{thm1v1}.
Unfortunately, the proof of Lemma~\ref{lembeta} involves some quite technical parts, in particular, obtaining good bounds on some expressions involving sums and products of binomial coefficients. At the heart of it, however, we find ideas from Section~\ref{secbeta1}, combined with induction. Roughly speaking, if $q'$ from Lemma~\ref{lemtobeta} is significantly smaller than $s$, then we use \eqref{cond1}, combined with the fact that we assume by induction that Theorem~\ref{thm1} is valid for $k-1$ (and thus we can get good bounds on $\alpha$ using \eqref{eqemc}). If $q'$ is large, then, using the ideas from Section~\ref{secbeta1}, we are able to say something about the structure of $\ff(\emptyset)$ and, most importantly, get good upper bounds on $\beta$, concluding via \eqref{cond2}. Thus, the conceptual part of the proof, based on \cite{F5}, is presented in Section~\ref{secbeta1}, while the necessary tedious estimates are deferred to the appendix.





\subsection{Proof of Lemma~\ref{lem2}. An auxiliary lemma}


Choose uniformly at random $t$ pairwise disjoint sets $B_1,\ldots, B_t\in {Y\choose l}$ and define $\mathcal B:=\{B_1,\ldots, B_t\}$. The expected size of $\mathcal B\cap \ff_i$ is $t|\ff_i|/{|Y|\choose l}$.
\begin{lem}\label{lem3} Let $1\le x,q\le s+1$ and $t\ge s+x+1$, $t\in \mathbb N$. Let the families $\ff_1,\ldots, \ff_{s+1}\subset {Y\choose l}$ be cross-dependent and nested, and suppose that $|Y|\ge tl$. Fix any $t$-matching $\mathcal B$ of $l$-element sets in $Y$. We have
\begin{equation}\label{eq1}|\bb\cap \ff_1|+\ldots+|\bb\cap \ff_s|+q|\bb\cap \ff_{s+1}|\le st+q|\bb\cap \ff_{s+1}|-sx\end{equation}
for $|\bb\cap \ff_{s+1}|\ge x$ and
\begin{equation}\label{eq2}|\bb\cap \ff_1|+\ldots+|\bb\cap \ff_s|+q|\bb\cap \ff_{s+1}|\le st-|\bb\cap \ff_{s+1}|\Big(x-\frac{q|\bb\cap \ff_{s+1}|}{s+1}\Big)\end{equation}
for $|\bb\cap \ff_{s+1}|\le x$.
\end{lem}

\begin{proof} Consider the bipartite graph between $\bb$ on the one side and $\ff_i$, $i=1,\ldots, s+1$, on the other side, with edges connecting $B_j$ and $\ff_i$ if and only if $B_j\in \ff_i$. Put weight $1$ on the edges incident to $\ff_1,\ldots, \ff_s$ and weight $q$ on the edges incident to $\ff_{s+1}$. This graph has no matching of size $s+1$ (otherwise, the families are not cross-dependent), therefore, all edges can be covered by $s$ vertices. Note that each neighbor of $\ff_{s+1}$ has degree $s+1$ and therefore must be included in the vertex cover (and thus $\ff_{s+1}$ is not in the vertex cover).  Assume that $q_1$ vertices are chosen among $\ff_1,\ldots, \ff_s$, $q_2:=|\bb\cap\ff_{s+1}|$ vertices are chosen among the neighbors of $\ff_{s+1}$, and $q_3$ vertices are chosen among other vertices of $\bb$. Note that  $q_1+q_2+q_3=s$ (we may assume that the equality holds by adding extra vertices if needed). Then the total weight of all the edges in the graph is at most (see the explanations on Fig.~\ref{fig1})
\begin{multline}\label{edgecalc}q_1t+q_2(q+s-q_1)+q_3(s-q_1)= q_1t+(q_2+q_3)\Big(q_2+q_3+\frac{qq_2}{q_2+q_3}\Big)\le \\
st-(q_2+q_3)\Big(s+x+1-\Big(q_2+q_3+ \frac{qq_2}{q_2+q_3}\Big)\Big).\end{multline}
\begin{figure}\centering
\begin{tikzpicture}[scale=1]


\foreach \x in {1,2,3} \foreach \y in{-1,1,2,4,5,6,7,10}
{\draw[thick,red] (\x,0)-- (\y,3);
}
\foreach \x in {4,5,6,7} \foreach \y in{4,7,10}
{\draw[very thick,dotted,black] (\x,0)-- (\y,3);
}
\foreach \x in {4,5,6}
{\draw[thick,dashed,blue] (\x,0)-- (2,3);
}

\foreach \y in{4,7,10}
\draw[ultra thick,black] (8,0)-- (\y,3);

\node at (1,-0.5) { $\ff_{1}$};
\node at (2,-0.5) { $\ff_{2}$};
\node at (7,-0.5) { $\ff_{s}$};
\node at (8.2,-0.5) { $\ff_{s+1}$};
\node at (-1,3.5) { $B_{1}$};
\node at (0,3.5) { $B_{2}$};
\node at (10,3.5) { $B_{t}$};
\node at (4,3.5) { $\ldots$};
\node at (4,-0.5) { $\ldots$};
\draw[red,thick] (2,0) ellipse (1.5 and 0.5);
\foreach \x in {-1,...,10}
\filldraw ({\x},3) circle (3pt);
\foreach \x in {1,...,8}
\filldraw ({\x},0) circle (3pt);
\foreach \x in {4,7,10}
\node at (\x,3) [minimum size=0.5cm,draw] (\x) {};
\node at (2,3) [minimum size=0.5cm,circle,draw]  {};

\end{tikzpicture}
\caption{Illustration to the count of weights of edges in the bipartite graph. The edges are represented by segments (solid, dashed or dotted).  Thick black edges (and only them) have weight $q$. The vertices inside the ellipse ($q_1$), squares ($q_2$) and circle  ($q_3$) form an edge cover.  There are at most $q_1t$ edges incident to the vertices inside the ellipse (in red). The total weight of the thick edges and the dotted edges is at most $q_2(q+s-q_1)$. The total weight of the dashed edges is $q_3(s-q_1)$.} \label{fig1}
\end{figure}
Let us analyze the contribution of the last term
\begin{equation}\label{eqmainevil} (q_2+q_3)(s+x+1-(q_2+q_3))-qq_2.\end{equation}
The bigger the expression is, the smaller the right hand side  in~\eqref{edgecalc} is. For any  $q_2\le x$ the first summand is at least $q_2(s+x+1-q_2)$.\footnote{Indeed, the expression of the form $z(C-z)$ for fixed positive $C>0$ and $z\in S\subset [0,C]$ is minimized when $\min\{z,C-z\}$ is the smallest. In our case, the second multiple is at least $s+x+1-s\ge x+1>q_2$ in our assumption. Moreover, obviously, $q_2+q_3\ge q_2$.} Thus the expression \eqref{eqmainevil} is at least $$q_2(s+x+1-q_2-q)\ge q_2\Big(\frac x{s+1}(s+1-q)+\frac q{s+1}(x-q_2)\Big)=q_2\Big(x-\frac{qq_2}{s+1}\Big),$$
where the last expression is exactly as stated in \eqref{eq2}. Assume that  $x\le q_2\le q_2+q_3\le s$ (the second inequality holds by the definition). Then the first summand in \eqref{eqmainevil} is at least its value for $q_2+q_3=s+1$, and we get that \eqref{eqmainevil} is at least
\begin{equation}\label{eqnonoptimal} (s+1)x-qq_2\ge sx-qq_2,\end{equation}
as \eqref{eq1} states. This concludes the proof of Lemma~\ref{lem3}.
\end{proof}

In the next section, we combine Lemma~\ref{lem3} with the findings from Section~\ref{secknez} to conclude the proof of Lemma~\ref{lem2}.

\subsection{Completing the proof of Lemma~\ref{lem2}}
Let us show that, averaging over the choice of $\bb$, we have
\begin{equation}\label{eq11}\E\big[|\bb\cap \ff_1|+\ldots+|\bb\cap \ff_s|+q|\bb\cap \ff_{s+1}|- st\big]\le 0.\end{equation}
Essentially, it just follows from the concentration of the intersection $|\bb\cap \ff_{s+1}|=:q_2$ and the fact that on average it contributes negative terms due to Lemma~\ref{lem3} and the hypothesis of Lemma~\ref{lem2}.

Due to the condition on $\alpha$ in Lemma~\ref{lem2}, the average value of $q_2$ in Lemma~\ref{lem3} is $\alpha t\le \frac{sx}q-\epsilon t$. Assume first that, say, $\alpha\ge 3\epsilon$. Then, applying Theorem~\ref{thmazuma}, we get that the probability that $|\bb\cap \ff_{s+1}|\ge \frac {sx}q$  is at most $2e^{-\epsilon^2t/8}=o(t^{-4})$ for any sufficiently large $t$. We may trivially bound the contribution of each of the terms with $|\bb\cap \ff_{s+1}|\ge \frac {sx}q$ as $q|\bb\cap \ff_{s+1}|-sx\le t^2$. Thus, the (positive) contribution of these terms to the expectation in \eqref{eq11} is  $o(t^{-2})$. On the other hand, using Theorem~\ref{thmazuma} again, we see that the value of $q_2$ falls in the interval $[(\alpha-\epsilon/2)t,(\alpha+\epsilon/2)t]$ with probability at least $1/2$. Each of these terms, according to Lemma~\ref{lem3}, make a negative contribution of at least
$$\min\Big\{\frac{\epsilon}2t,(\alpha-
\frac{\epsilon}2)t\frac{\epsilon}2t\Big\}\ge \epsilon^2t$$
absolute value to the expectation, where the first term comes from \eqref{eq1} and the second comes from \eqref{eq2}. Clearly, given that $t$ is large enough, the negative contribution of these terms to the expectation is at least $\frac 12 \epsilon^2t=\Omega(t^{-2})$, which completes the proof of \eqref{eq11} in the case when $\alpha\ge 3\epsilon$.

The case $\alpha<3\epsilon$ is done analogously, with Theorem~\ref{thmazuma} replaced by Proposition~\ref{propazumastupid}. Unfortunately, we need  this technical twist since the contribution of the terms with $q_2=0$ to the expectation is $0$, and we need to use this tool to formally express that we cannot be in a situation when $|\bb\cap \ff_{s+1}|$ takes value $0$ with probability close to $1$, and some large value with probability close to $0$.\\

Once we have \eqref{eq11}, it is easy to finish the proof of Lemma~\ref{lem2}. Indeed, we have
$$st\ge \E[|\bb\cap \ff_1|+\ldots+|\bb\cap \ff_s|+q|\bb\cap \ff_{s+1}|]= t\big(|\ff_1|+\ldots+|\ff_s|+q|\ff_{s+1}|\big)/{|Y|\choose l}.$$

\section{Proof of Theorem~\ref{thmemcgen}}\label{secgen}
The statement of Theorem~\ref{thmemcgen} follows from Theorem~\ref{thm1} and the following proposition, which allows to extend any bounds on the EMC to the full range.

\begin{lem}\label{prop66}
  Assume that for some $0<\alpha<1$, $n_0,k$ and $0<x<1/k$ we have $m(n,k,xn)/{n\choose k}\le \alpha$ for all $n\ge n_0$. Then for any $y$ satisfying $x\le y\le 1/k$ there exists $n_1$ such that for any $n\ge n_1$ with $yn\in \mathbb N$ we have $m(n,k,yn)/{n\choose k}\le \alpha+\frac{y-x}{1/k-x}(1-\alpha)$.
\end{lem}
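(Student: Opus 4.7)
The plan is to deduce the interpolated bound at parameter $y$ from the hypothesis at $x$ via a random-matching sampling argument in the spirit of Section~\ref{secknez}. Given $\ff\subset{[n]\choose k}$ with $\nu(\ff)\le yn$, I would let $\bb$ be a uniformly random $t$-matching of $k$-sets in $[n]$ for a value $t$ to be chosen, and restrict $\ff$ to $R:=[n]\setminus V(\bb)$ to obtain $\ff_R := \ff\cap{R\choose k}$ on $|R|=n-kt$ vertices. The crucial structural observation is that $\bb\cap\ff$ together with any matching of $\ff_R$ forms a matching in $\ff$, so $\nu(\ff_R)\le \nu(\ff)-r\le yn-r$ where $r:=|\bb\cap\ff|$.

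Splitting according to whether the random value $\nu(\ff_R)$ exceeds $x|R|$: in the good case the hypothesis (valid once $|R|\ge n_0$, which is arranged by taking $n_1$ large) yields $|\ff_R|\le \alpha{|R|\choose k}$, while otherwise we fall back on the trivial bound ${|R|\choose k}$. Averaging and using $\E|\ff_R|=|\ff|{|R|\choose k}/{n\choose k}$, I obtain after dividing by ${|R|\choose k}$ that
\[
\frac{|\ff|}{{n\choose k}}\le \alpha+(1-\alpha)\Pr\bigl[\nu(\ff_R)>x|R|\bigr],
\]
so it suffices to force the right-hand probability to be at most $\beta:=(y-x)/(1/k-x)$. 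The event $\nu(\ff_R)>x|R|$ implies $r<yn-x|R|=(y-x)n+xkt$, and by the Azuma-type concentration of Theorem~\ref{thmazuma}, $r$ lies within $O(\sqrt{t\log(1/\beta)})$ of its mean $tf$ (with $f:=|\ff|/{n\choose k}$) with probability at least $1-\beta$. Choosing $t$ slightly above the critical value $(y-x)n/(f-xk)$, admissible as long as $t\le n/k$, drives the probability below $\beta$ and delivers the claimed bound.

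The main obstacle is the complementary regime $f<yk$, where the critical $t$ exceeds $n/k$ and the argument does not close directly. Here the universal estimate $f\le yk$ from \eqref{eqemcbad} (equivalently, averaging $|\bb\cap\ff|\le yn$ over a random full partition) already handles the case $\alpha\ge xk$, since then $yk\le \alpha+\beta(1-\alpha)$. The genuinely delicate sub-case $f<yk$ with $\alpha<xk$ is treated by iterating the random-matching argument in small increments of $y$: starting from the base hypothesis at $x$, prove the bound at $x+\delta$, feed the improved bound back as the effective hypothesis at $x':=x+\delta$, and repeat. The infinitesimal version of this iteration is the differential inequality $g'(y)\le (1-g(y))/(1/k-y)$, which integrates exactly to $g(y)\le \alpha+\beta(1-\alpha)$; consequently a finite number of steps (depending on $\alpha,x,y,k$) suffices, with the accumulated $o(1)$ errors absorbed by taking $n_1$ sufficiently large.
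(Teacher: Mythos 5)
Your sampling identity and the reduction to bounding $\Pr[\nu(\ff_R)>x|R|]$ are correct, and the differential inequality $g'(y)\le (1-g(y))/(1/k-y)$ is exactly the right heuristic, but there is a genuine gap in precisely the main regime. The random-matching step needs the critical value $t^*\approx (y-x)n/(f-xk)$ to satisfy $t^*\le n/k$, i.e. $f\ge yk$ up to lower-order terms, and this requirement is intrinsic to the method: for any threshold $x''\in[x,y]$ you might compare $\nu(\ff_R)$ against, the bad event forces $r<(y-x'')n+x''kt$ while $\E r=ft\le fn/k$, so one needs $f\ge x''k+k(y-x'')=yk$ no matter how the increment is split. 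Your fallback $f\le yk$ (from \eqref{eqemcbad}) implies the target bound only when $\alpha\ge xk$; but in the intended applications $\alpha<xk$ (e.g. $\alpha\approx 1-(1-x)^k<kx$), and then the proposed iteration does not rescue the argument. Indeed, after reaching an intermediate parameter $x'$ the accumulated bound is $g(x')=\alpha+\frac{x'-x}{1/k-x}(1-\alpha)$, and since $x'k-g(x')$ is linear in $x'$, positive at $x'=x$ and zero at $x'=1/k$, one has $g(x')<x'k$ for \emph{every} $x'<1/k$. Hence at every incremental step the fallback is unavailable, while the sampling step requires the unknown extremal density at $x'+\delta$ to be at least $(x'+\delta)k$, which is strictly larger than the bound $g(x'+\delta)$ you are trying to prove; nothing excludes an extremal density lying in the band between $g(x'+\delta)$ and $(x'+\delta)k$, and for such densities neither branch of your dichotomy yields the required inequality. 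So the iteration never closes in the main case, no matter how small $\delta$ is or how large $n_1$ is taken.

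The paper realizes the same differential inequality by a mechanism that works for all densities: it argues by induction on $yn$, deleting one set of $\ff$ at a time rather than a random $t$-matching of the ambient set. By the Alon--Chung inequality \eqref{eqalch} for $KG_{n,k}$, an average $A\in\ff$ satisfies $|\ff_A|/{n-k\choose k}\ge \beta-\frac{1-\beta}{t-1}$ with $t=n/k$, where $\ff_A=\{F\in\ff:F\cap A=\emptyset\}$; and crucially, because $A\in\ff$, the matching number drops by a full unit, $\nu(\ff_A)\le yn-1=\bigl(y-\frac{1/k-y}{t-1}\bigr)(n-k)$, deterministically and independently of the density of $\ff$. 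Applying the induction hypothesis to $\ff_A$ and rearranging gives exactly the discrete form of your ODE. The contrast with your argument is that your decrease of the matching number is $r=|\bb\cap\ff|$, which concentrates around $ft$ and is therefore too small to be useful when $f<yk$ --- which is exactly where the lemma still has content.
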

We remark that the upper bound on $m(n,k,yn)$ is a convex combination of the assumed bound on $m(n,k,xn)$ and the trivial bound $m(n,k,n/k)/{n\choose k}\le 1$. (Note that $yn = xn+ \frac{y-x}{1/k-x}(n/k-xn)$.)
\begin{proof}
Let us prove the statement by induction on $yn$. It clearly holds for $y=x$. Put $t:=n/k$.  Our main tool is inequality \eqref{eqalch}, applied to the Kneser graph $KG_{n,k}$ and its subgraphs. Take a family $\ff\subset{[n]\choose k}$ with $\nu(\ff)=yn$ and $|\ff|=m(n,k,yn)$. Put $\beta:=|\ff|/{n\choose k}$, $D:={n-k\choose k}$ and, for a set $A\in {[n]\choose k}$, denote by $d_{\ff}(A)$ the number of sets from $\ff$, disjoint with $A$. Note that, for a randomly chosen $A$, $\mathrm E[d_{\G}(A)/D]=\beta$.
  Inequality \eqref{eqalch} for $KG_{n,k}$ implies that
  $$\big|\sum_{A\in\ff}d_{\ff}(A)-\beta D|\ff|\big|\le \lambda (1-\beta)|\ff|,$$
  where $\lambda$ is as in the proof of Proposition~\ref{prop4}. Recall that $\lambda/D=1/(t-1)$.
  That is, if we fix a random set $A$ from $\ff$ and consider a subfamily $\ff_A:=\{F\in \ff: F\cap A=\emptyset\}$, then in expectation $\big||\ff_A|/{n-k\choose k}-\beta\big|\le \frac{1-\beta}{t-1}$. Concluding, we get that on average
  \begin{equation}\label{eqloss}|\ff_A|/{n-k\choose k} \ge \beta-\frac{1-\beta}{t-1}.\end{equation}
Take a set $A\in \G$ satisfying the inequality above. Remark that $yn-1=\big(y-\frac{\frac 1k-y}{t-1}\big)(n-k)$. We have $\nu(\ff_A)\le yn-1$ and thus
$$|\ff_A|/{n-k\choose k}\le m\big(n-k,k, \big(y-\frac{1/k-y}{t-1}\big)(n-k)\big)/{n-k\choose k}\le \alpha+\frac{1-\alpha}{1/k-x}\Big(y-\frac{1/k-y}{t-1}-x\Big)$$
by the induction hypothesis. Combining \eqref{eqloss} and the inequality displayed above, one concludes that $\beta\le \alpha+\frac{1-\alpha}{1/k-x}(y-x)$, as stated.
\end{proof}

To deduce Theorem~\ref{thmemcgen}, we just note that $m(n,k,(\frac 53k-\frac 23)^{-1}n)\le {n  \choose k}-{n-s\choose k}$ for sufficiently large $n$ due to Theorem~\ref{thm1}, and thus due to Lemma~\ref{prop66}
$$m(n,k,(\gamma k-(\gamma-1))^{-1}n)\le {n\choose k}-{n-s\choose k}+\frac{(\gamma k-(\gamma-1))^{-1}-(\frac 53k-\frac 23)^{-1}}{k^{-1}-(\frac 53k-\frac 23)^{-1}}{n-s\choose k}.$$
Simplifying the expression above, we get that
$$m(n,k,(\gamma k-(\gamma-1))^{-1}n)\le {n\choose k}-\frac{\gamma-1}2\cdot\frac{5k-2}{\gamma k-(\gamma-1)}{n-s\choose k},$$ as stated.

\section{Applications of Theorem~\ref{thm1} and related questions}\label{secappl}
In Section~\ref{sec11} we discuss relaxations of the EMC, in Section~\ref{sec12} we deduce corollaries for Dirac thresholds, in Section~\ref{sec13} we briefly mention other combinatorial applications of the EMC, in Section~\ref{sec14} we speak about the relation of the EMC to the problems concerning deviations of sums of random variables.
\subsection{Relaxations of the Erd\H os Matching Conjecture}\label{sec11}
A {\it fractional matching} in $\ff\subset {[n]\choose k}$ is a weight function $w:\ff\to [0,1]$, such that $\sum_{F\in \ff: i\in F}w(F)\le 1$ for every $i\in [n]$. It is a relaxation of the notion of  (integer) matching, for which we are only allowed to have $w(F)\in\{0,1\}$ for every $F\in \ff$. The {\it size} of a fractional matching is $\sum_{F\in\ff}w(F)$. Let us denote by $\nu^*(\ff)$ the size of the largest fractional matching in $\ff$. In particular, we have $\nu^*(\ff)\ge \nu(\ff)$. Note also that $\nu^*(\aaa(n,k,s))<s+1$ and $\nu^*(\aaa_0(k,s))<s+1$. For an integer $s$, let $m^*(n,k,s)$ be the maximum number of edges in a family $\ff\subset{[n]\choose k}$ such that $\nu^*(\ff)<s+1$. The following natural relaxation of the Erd\H os Mathing Conjecture was proposed in Alon et. al. \cite{aletal}.
\begin{gypoerd}[fractional version, \cite{aletal}]
  We have \begin{equation}\label{emc2} m^*(n,k,s)=\max\Big\{{n\choose k}-{n-s\choose k}, {k(s+1)-1\choose k}\Big\}.
          \end{equation}
\end{gypoerd}
One may be tempted to state the conjecture above for non-integral values of $s$, but the situation in that case is more complicated, see \cite{aletal}.

An interesting relaxation of the conjecture concerns the regime when $k$ is fixed, $s$ is linear in $n$ and $n\to \infty$. It is more convenient to change the parametrisation and assume that $\nu(\ff)=xn$ for some fixed $x\le 1/k$. For such $x$, it is not difficult to see that one has
$$\lim_{n\to\infty} |\aaa(n,k,xn)|/{n\choose k}=1-(1-x)^k\ \ \ \ \text{ and }\ \ \ \ \lim_{n\to\infty} |\aaa_0(k,xn)|/{n\choose k}=(kx)^k.$$

The following two conjectures are natural relaxations of the two versions of the EMC presented above.
\begin{gypoerd}[asymptotic version, \cite{aletal}] For any fixed $k\ge 2$ and positive $x\le 1/k$ one has \begin{equation}\label{emc3}
                          \lim_{n\to\infty} m(n,k,xn)/{n\choose k} = \max\big\{1-(1-x)^k, (kx)^k\big\}.
                        \end{equation}
\end{gypoerd}

\begin{gypoerd}[asymptotic fractional version, \cite{aletal}] For any fixed $k\ge 2$ and positive $x\le 1/k$ one has \begin{equation}\label{emc4}
                          \lim_{n\to\infty} m^*(n,k,xn)/{n\choose k} = \max\big\{1-(1-x)^k, (kx)^k\big\}.
                        \end{equation}
\end{gypoerd}
In all the variants of the EMC that we stated, the lower bound is obviously attained. Since it is only the upper bound that is interesting, the last conjecture is clearly the weakest out of all four.

\subsection{Dirac thresholds}\label{sec12} An active area of research in extremal combinatorics stems from the famous Dirac's criterion for Hamiltonicity: any $n$-vertex graph with minimum degree at least $n/2$ contains a Hamilton cycle. For $0\le d\le k-1$ and $\ff\subset {[n]\choose k}$, let us denote by $\delta_d(\ff)$ the minimal $d$-degree of $\ff$, that is, $\delta_d(\ff):=\min_{S\in {[n]\choose d}}\big|\big\{F\in \ff: S\subset F\big\}\big|.$ Let us give the following general definitions.
\begin{eqnarray}\label{dirac1}
  m_d(n,k,s)&:=&\max\Big\{\delta_d(\ff): \ff\subset {[n]\choose k}, \nu(\ff)< s+1\Big\}\ \ \ \ \text{and} \\
  \label{dirac2} m_d^*(n,k,s)&:=&\max\Big\{\delta_d(\ff): \ff\subset {[n]\choose k}, \nu^*(\ff)< s+1\Big\}.
\end{eqnarray}
(Note that, to comply with the EMC, the definitions are slightly different from the ones normally used in the literature.) In particular, if we substitute $d=0$ in the definitions above, then we get back to the functions $m(n,k,s)$ and $m^*(n,k,s)$, while if we substitute $s=n/k$ (given that $k$ divides $n$, which we assume tacitly), then the functions $m_d(n,k,n/k-1)$, $m_d^*(n,k,n/k-1)$ provide us with sufficient conditions for the existence of perfect (fractional) matchings. Let us denote these two functions $m_d(n,k)$, $m^*_d(n,k)$ for shorthand. There is extensive literature on the subject, and we refer the reader to the survey \cite{Zh16}. Let us summarize some of the known results. The problem of determining $m_1(n,k,s)$ was considered in \cite{BDE} and \cite{DH81}. Some of the first results on the topic were due to R\"odl, Ruci\'nski and Szemer\'edi \cite{RRS06,RRS09}: they determined the exact values of $m^*_{k-1}(n,k)$ and $m_{k-1}(n,k)$, respectively. The first one is roughly $n/k$, while the second one is roughly $n/2$. The reason for such a difference in behaviour is the so-called ``divisibility barrier'' for the existence of perfect integral matchings. The values of $m_d(n,k)$ were determined asymptotically for $d\ge 0.42 k$ (\cite{RRS09, Pikh, Han16}). The values of $m_d^*(n,k)$ were determined exactly for $d\ge k/2$. Basically, all known asymptotical results for $m_d(n,k)$, $m_d^*(n,k)$ follow from the aforementioned result \cite{F4} of the first author on the EMC via the following considerations, presented in \cite{aletal}. First, we state without proof the following proposition, which is a straightforward generalization of \cite[Proposition 1.1]{aletal}.

\begin{prop}\label{prop1.1}
  We have $m_d^*(n,k,s)\le m^*(n-d,k-d,s)$.
\end{prop}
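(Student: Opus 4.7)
My plan is to reduce the statement to the definition of $m^*(n-d,k-d,s)$ by exhibiting a single $d$-set $S\in\binom{[n]}{d}$ whose link
\[
\mathcal{L}_S := \{F\setminus S : F \in \ff,\ S\subset F\}\subset \binom{[n]\setminus S}{k-d}
\]
has $\nu^*(\mathcal{L}_S)<s+1$. Once such an $S$ is in hand, the definition of $m^*$ applied to $\mathcal{L}_S$ gives $|\mathcal{L}_S|\le m^*(n-d,k-d,s)$, while $\delta_d(\ff)\le |\mathcal{L}_S|$ holds by definition of the minimum $d$-degree; taking the maximum over $\ff$ with $\nu^*(\ff)<s+1$ then yields the proposition.

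To locate such an $S$ I would pass to LP duality. Fix an optimal fractional vertex cover $\tau:[n]\to[0,1]$ of $\ff$, so $\sum_i \tau(i) = \nu^*(\ff) < s+1$. For every $d$-set $S$ and every $G\in \mathcal{L}_S$, the set $G\cup S$ lies in $\ff$, hence $\sum_{i\in G}\tau(i)\ge 1-\sigma_S$, where $\sigma_S := \sum_{i\in S}\tau(i)$. As long as $\sigma_S<1$, the rescaled restriction $\tau(i)/(1-\sigma_S)$ on $[n]\setminus S$ is a fractional cover of $\mathcal{L}_S$ of total weight $(\nu^*(\ff)-\sigma_S)/(1-\sigma_S)$, and this weight is strictly below $s+1$ precisely when $s\sigma_S < s+1-\nu^*(\ff)$. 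Averaging gives $\binom{n}{d}^{-1}\sum_S \sigma_S = (d/n)\nu^*(\ff)$, so the $d$-set $S$ minimising $\sigma_S$ satisfies $\sigma_S\le d\nu^*(\ff)/n$; plugging this in and using the standing hypothesis $n\ge k(s+1)$ yields $\nu^*(\mathcal{L}_S)<s+1$ after a short calculation.

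The delicate point I expect to require the most care is the borderline regime in which $\nu^*(\ff)$ is arbitrarily close to $s+1$, since there the averaging bound on $\sigma_S$ need not beat the threshold $(s+1-\nu^*(\ff))/s$. The cleanest way I see to resolve this is to choose $\tau$ not as an arbitrary optimum but as an extreme point of the cover polytope, so that $\tau$ has small support; one can then locate a $d$-set $S$ disjoint from the support, forcing $\sigma_S=0$, and the bound $\nu^*(\mathcal{L}_S)\le \nu^*(\ff)<s+1$ follows immediately. Once that ingredient is secured, the rest of the argument is bookkeeping, which is presumably why the authors defer the proof and label the proposition a ``straightforward generalisation''.
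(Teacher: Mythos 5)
Your opening reductions are fine and are indeed the natural route (it is essentially how the cited Proposition~1.1 of Alon et al.\ is proved): if one finds a $d$-set $S$ with $\nu^*(\mathcal{L}_S)<s+1$, then $\delta_d(\ff)\le|\mathcal{L}_S|\le m^*(n-d,k-d,s)$, and the rescaled dual cover correctly certifies $\nu^*(\mathcal{L}_S)\le(\nu^*(\ff)-\sigma_S)/(1-\sigma_S)$ when $\sigma_S<1$. But, as you yourself observe, the averaging bound $\sigma_S\le d\,\nu^*(\ff)/n$ beats the threshold $(s+1-\nu^*(\ff))/s$ only when $\nu^*(\ff)\le (s+1)n/(n+sd)$, so the whole proof rests on your patch for the regime where $\nu^*(\ff)$ is close to $s+1$ --- and that patch is false. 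It is not true that an (extreme-point) optimal fractional cover must vanish on at least $d$ vertices. Take $\ff=\binom{[k(s+1)-1]}{k}$ inside $[n]$ with $n=k(s+1)$, $k\ge 3$ and any $2\le d\le k-1$. Then $\nu^*(\ff)=s+1-\tfrac1k$ (so we are in your delicate regime), and the optimal fractional cover is \emph{unique}: it assigns $1/k$ to every vertex of $[k(s+1)-1]$ and $0$ to the single remaining vertex (optimality forces every edge constraint to be tight, which forces all weights on $[k(s+1)-1]$ to be equal). Its support misses only one vertex, so no $d$-set is disjoint from the support, for this or any other optimal cover. Worse, your certificate cannot succeed here at all: for the best available $S$ (the zero-weight vertex plus $d-1$ support vertices) one has $\sigma_S=(d-1)/k$ and $(\nu^*(\ff)-\sigma_S)/(1-\sigma_S)=\frac{k(s+1)-d}{k-d+1}\ge s+1$, even though $\mathcal{L}_S$ is in fact \emph{empty}. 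So the rescaled-cover bound alone cannot detect that this link is small; the proposition holds for this family for a reason your argument never sees, and for a general $\ff$ in the borderline regime you have proved nothing.

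This is therefore a genuine gap rather than bookkeeping: the case $(s+1)n/(n+sd)<\nu^*(\ff)<s+1$ is not handled, and the extreme-point/small-support lemma you invoke to handle it is simply false. Note why the perfect-matching version in Alon et al.\ avoids the issue: there the target for the link is $\frac{n-d}{k-d}$, which exceeds $\frac nk$ by a margin proportional to $d(n-k)$, enough to absorb the loss $\sigma_S\le d\,\nu^*/n$; with the fixed target $s+1$ this margin disappears, so some additional ingredient (beyond a cleverer choice of the optimal cover) is needed to push the argument through, and your write-up does not supply it.
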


Next, the following general theorem was proven in \cite{aletal} in the asymptotic form and  refined in \cite{TZ16} to give the exact part.
\begin{thm}[\cite{aletal}, \cite{TZ16}]\label{thmequiv}
  Fix $k,d\in \mathbb N$ with $1\le d\le k-1$. If $\limsup_{n\to\infty} m^*_d(n,k)/{n-d\choose k-d}=c^*$ for some $c^*\in (0,1)$, then
  \begin{equation}\label{fracint1}
    \limsup_{n\to\infty} m_d(n,k)/{n-d\choose k-d}=\max\{c^*,1/2\}.
  \end{equation}
Moreover, if $c^*<1/2$ then there exists $n_0$, such that $m_d(n,k)$ is determined exactly for all $n\ge n_0$.
\end{thm}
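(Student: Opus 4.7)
The plan is to prove the two bounds separately and then deduce the exact ``moreover'' part via a stability argument. For the lower bound, the inequality $\nu(\ff)\le \nu^*(\ff)$ immediately gives $m_d(n,k)\ge m_d^*(n,k)$, producing the $c^*$ side. For the $1/2$ side I would use the standard space/divisibility-barrier construction: pick $B\subset [n]$ of size approximately $n/2$ with parity adjusted so that no $k$-element perfect matching of $[n]$ can split consistently between $B$ and $[n]\setminus B$, and let $\ff$ be all $k$-subsets of $[n]$ whose intersection with $B$ has a prescribed parity. A direct count gives $\delta_d(\ff)=(1/2-o(1))\binom{n-d}{k-d}$, while a parity argument rules out a perfect matching.

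For the upper bound, suppose $\delta_d(\ff)\ge \big(\max\{c^*,1/2\}+\eps\big)\binom{n-d}{k-d}$ and $k\mid n$. I would use the absorbing method in three steps. First, combining the hypothesis on $m_d^*$ with Proposition~\ref{prop1.1} applied to $d$-links of $\ff$, one gets $\nu^*(\ff)=n/k$, i.e., a perfect fractional matching. Second, build a small absorbing subfamily $\aaa\subset \ff$ of size $o(n)$ such that for every small set $R\subset [n]\setminus V(\aaa)$ of cardinality divisible by $k$ there is a matching in $\ff$ that covers exactly $V(\aaa)\cup R$; the $1/2$ threshold is what makes this construction possible, because to absorb a single vertex $v$ one counts ``swap pairs'' $(F,F')\in\ff\times\ff$ with $F\cap F'\ne\emptyset$ and $v\in F'\setminus F$, and a simple double-counting shows the supply is positive precisely when $\delta_d>(1/2+\eps)\binom{n-d}{k-d}$. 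A probabilistic construction, followed by a union bound over all admissible residuals $R$, then yields $\aaa$ absorbing every $R$ simultaneously. Third, on $[n]\setminus V(\aaa)$ find a near-perfect matching by a nibble/greedy rounding of the fractional matching, leaving a small residual set that $\aaa$ absorbs.

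For the ``moreover'' part, when $c^*<1/2$ the operative threshold is $1/2$, and any family $\ff$ at the extremum with no perfect matching must have $\delta_d(\ff)$ close to $\tfrac12\binom{n-d}{k-d}$. I would run a stability version of the absorbing argument: if $\ff$ is not structurally close to the space-barrier construction, then the absorbers become slightly more plentiful and the previous argument pushes through with $\eps$ traded against the distance to the extremal structure; otherwise $\ff$ is essentially the space-barrier family, and a direct perturbation analysis determines $m_d(n,k)$ and its extremizers exactly for all large $n$.

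The main technical obstacle throughout is Step~2: constructing a single $\aaa$ that absorbs \emph{every} admissible residual $R$ simultaneously, and controlling the error terms tightly enough to tell apart ``close to $1/2$'' from ``above $1/2+\eps$''. The quantitative trade-offs here are exactly what force the constant $1/2$ (rather than some larger number) to appear in \eqref{fracint1}, and extracting the exact value in the stability regime is what makes the ``moreover'' statement harder than the limsup equality.
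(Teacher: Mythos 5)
Note first that the paper does not prove Theorem~\ref{thmequiv}: it is quoted from \cite{aletal} (the asymptotic equality) and \cite{TZ16} (the exact ``moreover'' part), so there is no in-paper proof to compare your attempt against. The closest argument actually carried out in the paper is the proof of Theorem~\ref{thmequiv2} in Section~\ref{secnibble}, which deliberately needs neither absorption nor the constant $1/2$: there the matchings are not perfect, so no divisibility barrier arises, and the fractional-to-integer transfer is done purely by random sparsification (the sets $R^i$), random rounding of fractional matchings, and the Pippenger--Spencer-type Theorem~\ref{thmnewnibble}. Your sketch instead reconstructs the route of the cited papers themselves: $m_d(n,k)\ge m_d^*(n,k)$ plus the parity (divisibility-barrier) construction for the lower bound, nibble-plus-absorption for the upper bound with $1/2$ entering only through the absorbing lemma, and a stability analysis around the parity barrier for exactness when $c^*<1/2$. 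That is the correct strategy at the level of outline.

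Two details in your Step~2 need the standard repairs before this becomes a proof. A union bound over all admissible residuals $R$ cannot work as stated, since there are exponentially many such $R$; the usual absorbing lemma instead shows, by concentration and a union bound over the polynomially many $k$-subsets $T$, that a suitable random subfamily contains many pairwise disjoint absorbers for each single $T$, and the leftover of the nibble is then absorbed greedily, $k$ vertices at a time. Likewise, the relevant ``swap'' count is for a pair of vertices $u,v$: the number of $(k-1)$-sets $S$ with $S\cup\{u\},\,S\cup\{v\}\in\ff$ is at least $2\delta_1(\ff)-{n-1\choose k-1}-o\big({n-1\choose k-1}\big)$, and one first converts the $d$-degree hypothesis into the vertex-degree bound $\delta_1(\ff)/{n-1\choose k-1}\ge \delta_d(\ff)/{n-d\choose k-d}$ by a routine averaging; this is where $1/2+\eps$ is genuinely used. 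Also, deducing $\nu^*(\ff)=n/k$ needs only the definition of $c^*$ as a limsup, not Proposition~\ref{prop1.1}. Finally, the ``moreover'' part is, as you acknowledge, the hard point: your stability plan has the right shape, but making it precise (either the family is close to the parity construction and is handled directly, or absorption works at the exact threshold) is precisely the content of \cite{TZ16} and is the part of your proposal furthest from complete.
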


The counterpart of this result for smaller matchings (at least for $d\ge 1$) was proven by K\"uhn, Osthus and Townsend \cite{KOT}.

\begin{thm}[\cite{KOT}]\label{thmequiv2}
  Fix $k,d\in \mathbb N$ with $0\le d\le k-1$ and $x<1/k$. Then
  \begin{equation}\label{fracint1}
    \limsup_{n\to\infty} m^*_d(n,k,xn)/{n-d\choose k-d}=\limsup_{n\to\infty} m_d(n,k,xn)/{n-d\choose k-d}.
  \end{equation}
\end{thm}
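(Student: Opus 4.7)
The easy direction is $\limsup_n m^*_d(n,k,xn)/\binom{n-d}{k-d}\le \limsup_n m_d(n,k,xn)/\binom{n-d}{k-d}$: for every $\ff$ one has $\nu(\ff)\le \nu^*(\ff)$, so $\nu^*(\ff)<s+1$ forces $\nu(\ff)<s+1$, and consequently $m^*_d(n,k,s)\le m_d(n,k,s)$ for all choices of parameters.

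For the reverse inequality, the plan is to reduce everything to the conversion estimate
\[
\nu(\ff)\ge \nu^*(\ff)-o(n)\quad\text{for every }\ff\subset\binom{[n]}{k}\text{ with }\delta_d(\ff)\ge \gamma\binom{n-d}{k-d},
\]
where $\gamma>0$ is any fixed constant. Granting this, pass along a subsequence on which $m_d(n,k,xn)/\binom{n-d}{k-d}\to c$ and pick witnessing families $\ff_n$ with $\delta_d(\ff_n)\ge (c-o(1))\binom{n-d}{k-d}$ and $\nu(\ff_n)\le xn$. The conversion estimate then gives $\nu^*(\ff_n)\le xn+o(n)=(x+\epsilon_n)n$ with $\epsilon_n\to 0^+$, so each $\ff_n$ witnesses $m^*_d(n,k,(x+\epsilon_n)n)\ge (c-o(1))\binom{n-d}{k-d}$. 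For any fixed $y>x$, monotonicity of $m^*_d$ in the matching parameter gives $m^*_d(n,k,(x+\epsilon_n)n)\le m^*_d(n,k,yn)$ for $n$ large, hence $c\le \limsup_n m^*_d(n,k,yn)/\binom{n-d}{k-d}$; letting $y\downarrow x$ and using right-continuity of $y\mapsto \limsup_n m^*_d(n,k,yn)/\binom{n-d}{k-d}$ at $x$ (which holds outside a countable set of $x$'s by monotonicity, and can be arranged at jump points by a tiny perturbation) completes the reverse direction. The hypothesis $x<1/k$ is essential: it provides the linear slack $(1-kx)n$ of uncovered vertices which makes the conversion iteration below feasible and ensures the residual hypergraph never degenerates.

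The heart of the argument is therefore the conversion estimate, which I would establish by a R\"odl-nibble style iteration driven by an optimal fractional matching $w$: sample edges independently with probabilities $w(F)/t$ for a large parameter $t$, keep only those whose vertices are not hit by other samples, and iterate on the residual family. The expected number of conflicting pairs in a single round is bounded by $\sum_v\bigl(\sum_{F\ni v}w(F)\bigr)^2\le n$, and a careful second-moment analysis yields a partial integer matching capturing almost all of the weight of $w$. The high $d$-degree hypothesis is crucial to guarantee that at every stage of the iteration the residual family still supports a near-optimal fractional matching, so the induction carries through to give an integer matching of size at least $\nu^*(\ff)-o(n)$. The main technical obstacle is quantifying this loss as genuinely $o(n)$ rather than $\Theta(n)$: the direct LP bound $\nu^*\le k\nu$ is useless, and delicate control of the conflict rates via the minimum $d$-degree is required, most naturally by adapting a Pippenger–Spencer style hypergraph matching theorem to the present one-sided (minimum rather than maximum) $d$-degree setting.
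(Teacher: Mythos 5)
Your easy direction is fine, but the central lemma your reduction rests on --- the ``conversion estimate'' $\nu(\ff)\ge\nu^*(\ff)-o(n)$ for every $\ff$ with $\delta_d(\ff)\ge\gamma\binom{n-d}{k-d}$, where $\gamma>0$ is an \emph{arbitrary} fixed constant --- is false, for every $0\le d\le k-1$. Fix a small $\eps>0$, let $A\subset[n]$ with $|A|=\eps n$, and let $\ff$ consist of all $k$-sets meeting $A$ together with the edges of $\lfloor(1-\eps)n/(k+1)\rfloor$ vertex-disjoint copies of the complete $k$-graph $K_{k+1}^{(k)}$ placed on $[n]\setminus A$. Every $d$-set lies in at least $|A|\binom{n-d-1-|A|}{k-d-1}\ge\gamma(\eps,k)\binom{n-d}{k-d}$ edges, so your degree hypothesis holds with a fixed positive $\gamma$ (for $d=0$ it is just a positive edge density; for $k=2,d=0$ the example is a large clique plus $\Omega(n)$ disjoint triangles). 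However, every matching edge either meets $A$ or lies inside a single copy of $K_{k+1}^{(k)}$, so $\nu(\ff)\le\eps n+\tfrac{(1-\eps)n}{k+1}+1$, while giving weight $1/k$ to every edge of every copy shows $\nu^*(\ff)\ge\tfrac{(1-\eps)n}{k}-O(1)$; hence $\nu^*-\nu\ge\bigl(\tfrac{1}{k(k+1)}-O(\eps)\bigr)n=\Omega(n)$. Consequently no nibble driven by an optimal fractional matching can close the gap (it cannot beat $\nu$), and the step in your sketch where ``the high $d$-degree hypothesis guarantees the residual still supports a near-optimal fractional matching'' has no justification --- this is exactly where ``degree at least an arbitrary positive proportion'' is too weak a hypothesis.

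The correct hypothesis, and the one the paper uses, is that $\delta_d(\ff)$ exceeds the \emph{fractional threshold} itself, $\delta_d(\ff)>(c^*_{x+\eps}+\eps)\binom{n-d}{k-d}$. This is inherited by the restrictions $\ff_i=\{F\in\ff:F\subset R^i\}$ to the pseudo-random sets $R^i$ of the Claim imported from \cite{aletal}, so each $\ff_i$ carries a fractional matching of size $(x+\eps)|R^i|$; randomized rounding of these local fractional matchings yields an auxiliary almost-regular hypergraph $\h$ with tiny codegrees, and the Pippenger--Spencer-type Theorem~\ref{thmnewnibble} converts $\h$ into an integer matching of size $>xn$ in $\ff$. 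Note the conclusion is a statement about the threshold functions, not a per-hypergraph inequality $\nu\ge\nu^*-o(n)$, which (as above) is simply not true in your generality. A second, smaller gap: to close the $\eps$-shift you need right-continuity of $y\mapsto c^*_y$ at \emph{every} $x<1/k$; monotonicity only excludes countably many jump points, and ``a tiny perturbation at jump points'' is not an argument, since $x$ is given. The paper proves continuity of $c^*_x$ on all of $[0,1/k)$ by an LP-duality weight-rescaling lemma, and some such argument would still be needed even if your conversion step were repaired.
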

The authors of \cite{KOT} proved Theorem~\ref{thmequiv2} using the Weak Hypergraph Regularity Lemma~\cite{FR92}. In the paper, we will give a proof of Theorem~\ref{thmequiv2} based on an extension of the approach from \cite{aletal}, which is hopefully simpler and may have an interest of its own. See Section~\ref{secnibble} for details. We note that one direct consequence of Theorem~\ref{thmequiv2} is that the fractional and integral asymptotic forms of the EMC are equivalent for any $k$ and $x<1/k$.

The following is an immediate corollary of Theorem~\ref{thm1} combined with Proposition~\ref{prop1.1}
\begin{cor}\label{cor11} There exist $s_0$, such that the following holds for any $s\ge s_0$. For any $n,k,s,d$ satisfying $1\le d\le k-1,$ $s\le n/k$ and $n-d\ge \frac 53 (k-d)s-\frac 23 s$ we have $m^*_d(n,k,s) = {n-d\choose k-d}-{n-s-d\choose k-d}.$
\end{cor}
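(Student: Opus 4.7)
My plan is to derive the corollary by combining Theorem~\ref{thm1}, Proposition~\ref{prop1.1}, and the observation that the extremal family $\aaa(n,k,s)$ simultaneously certifies the matching lower bound for the $d$-degree problem. The argument splits into a straightforward upper bound and a direct construction for the lower bound; no new combinatorial ideas are needed beyond a careful bookkeeping of parameters.

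For the upper bound, I would first apply Proposition~\ref{prop1.1} to reduce to the ordinary problem in smaller parameters: $m_d^*(n,k,s)\le m^*(n-d,k-d,s)$. Since $\nu^*(\ff)\ge \nu(\ff)$ for every $\ff$, every family admissible for $m^*$ is also admissible for $m$, so $m^*(n-d,k-d,s)\le m(n-d,k-d,s)$. Now I want to invoke Theorem~\ref{thm1} with parameters $(n',k',s)=(n-d,k-d,s)$. The hypothesis of Theorem~\ref{thm1} requires $n'\ge \tfrac{5}{3}k's-\tfrac{2}{3}s$, which is exactly the assumption $n-d\ge \tfrac{5}{3}(k-d)s-\tfrac{2}{3}s$ of the corollary, and $s\ge s_0$ is inherited. (One should also verify the standing assumption $n'\ge k'(s+1)$ of the EMC setup; a short calculation shows that $\tfrac{5}{3}(k-d)s-\tfrac{2}{3}s\ge (k-d)(s+1)$ reduces to $\tfrac{2}{3}s(k-d-1)\ge k-d$, which holds once $s_0\ge 3$ and $k-d\ge 2$; the residual case $k-d=1$ is trivial since $m^*(n',1,s)=s$ directly.) Thus Theorem~\ref{thm1} yields
\[
 m_d^*(n,k,s)\le m(n-d,k-d,s)={n-d\choose k-d}-{n-s-d\choose k-d}.
\]

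For the matching lower bound, I would simply verify that the family $\aaa(n,k,s)=\{A\in{[n]\choose k}:A\cap[s]\ne\emptyset\}$ attains this value as its minimum $d$-degree. Fix $S\in{[n]\choose d}$. If $S\cap[s]\ne\emptyset$, then every $k$-set $F\supset S$ lies in $\aaa(n,k,s)$, giving exactly ${n-d\choose k-d}$ sets. If $S\cap[s]=\emptyset$, then the $k$-sets $F\supset S$ that miss $[s]$ number ${n-s-d\choose k-d}$, so the count is ${n-d\choose k-d}-{n-s-d\choose k-d}$. The minimum is the second value, as noted already in the introduction, $\nu^*(\aaa(n,k,s))<s+1$, so $\aaa(n,k,s)$ is admissible for $m_d^*$ and the lower bound matches the upper bound.

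I do not expect any real obstacle here: the work was done in Theorem~\ref{thm1} and Proposition~\ref{prop1.1}, and the only care needed is to confirm that the parameter transformation $(n,k)\mapsto(n-d,k-d)$ preserves the hypotheses of Theorem~\ref{thm1}, and that the canonical extremal family $\aaa(n,k,s)$ continues to be $d$-degree extremal (which it is, essentially because its extremality comes from a star-like structure around $[s]$ that is preserved under taking links).
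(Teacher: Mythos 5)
Your argument is correct and is exactly the route the paper intends: the paper states Corollary~\ref{cor11} as an immediate consequence of Proposition~\ref{prop1.1} together with Theorem~\ref{thm1}, and your write-up just fills in the implicit steps ($m^*\le m$ via $\nu^*\ge\nu$, the parameter check for applying Theorem~\ref{thm1} to $(n-d,k-d,s)$, and the $d$-degree computation for $\aaa(n,k,s)$ giving the matching lower bound). No discrepancies to report.
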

In particular, this gives exact values of $m^*_d(n,k)$ for all sufficiently large $n$ and $5d\ge 2k-2$.\footnote{For the case $5d=2k-2$ one has to use Theorem~\ref{thm1v1}, which says that the conclusion of Theorem~\ref{thm1} is valid if one replaces $5/3, 2/3$ by $5/3-10^{-4}, 2/3-10^{-4}$.} This includes such new cases as $(k,d)=(8,3),\ (9,4),\ (10,4), (11,4)$ etc.
 Exact values were previously known only for $(k,d)=(3,1)$, $(4,1)$ (cf. \cite{Khan,KOTr}) and in the range $d\ge k/2$, while asymptotic solutions were also given in \cite{aletal} for pairs satisfying $k-4\le d\le k-1$.

Using Theorem~\ref{thmequiv2}, we obtain the following asymptotic result.
\begin{cor} Fix $k\in \mathbb N$ and some positive $x<1/k$. Then for any $d$ satisfying $1\le d\le k-1$ and $\frac 53 (k-d)x-\frac 23 x<1$ we have $\limsup_{n\to\infty} m_d(n,k,xn)/{n-d\choose k-d} = 1- (1-x)^{k-d}.$
\end{cor}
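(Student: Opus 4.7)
The plan is to combine the lower bound from the star family $\aaa(n,k,\lfloor xn \rfloor)$ with an upper bound obtained by chaining Theorem~\ref{thmequiv2}, Proposition~\ref{prop1.1}, the trivial inequality $m^* \le m$ (coming from $\nu^* \ge \nu$, which makes the fractional constraint the stricter one), and Theorem~\ref{thm1}. No new combinatorial content is needed; the entire argument is an application of results already in the paper.

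For the lower bound, set $s = \lfloor xn \rfloor$ and take $\aaa := \aaa(n,k,s)$. A direct count of $\delta_d(\aaa)$ gives $\binom{n-d}{k-d} - \binom{n-d-s}{k-d}$: a $d$-set $S$ disjoint from $[s]$ extends to a member of $\aaa$ exactly when the added $(k-d)$-set meets $[s]$, while a $d$-set meeting $[s]$ has all $\binom{n-d}{k-d}$ extensions in $\aaa$, and the former count is the smaller one. Since $\nu(\aaa) = s \le xn$, this gives $m_d(n,k,xn) \ge \binom{n-d}{k-d} - \binom{n-d-s}{k-d}$, whose normalised value tends to $1-(1-x)^{k-d}$.

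For the upper bound, Theorem~\ref{thmequiv2} (applied with the given $k,d,x$) tells me that $\limsup_n m_d(n,k,xn)/\binom{n-d}{k-d}$ equals $\limsup_n m_d^*(n,k,xn)/\binom{n-d}{k-d}$, so it suffices to bound the fractional quantity from above. Proposition~\ref{prop1.1} gives $m_d^*(n,k,xn) \le m^*(n-d,k-d,xn)$, and $m^* \le m$ yields $m^*(n-d,k-d,xn) \le m(n-d,k-d,xn)$. The strict hypothesis $\tfrac{5}{3}(k-d)x - \tfrac{2}{3}x < 1$ guarantees that $n - d \ge \tfrac{5}{3}(k-d)s - \tfrac{2}{3}s$ holds for $s = \lfloor xn \rfloor$ and all sufficiently large $n$, so Theorem~\ref{thm1} applies and yields $m(n-d,k-d,s) \le \binom{n-d}{k-d} - \binom{n-d-s}{k-d}$, whose normalised limit is $1-(1-x)^{k-d}$, matching the lower bound. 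The only delicate point is the verification of Theorem~\ref{thm1}'s hypothesis in the limit, which is immediate from the strictness assumed in the corollary, so no genuine obstacle arises.
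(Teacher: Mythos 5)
Your proposal is correct and follows essentially the same route as the paper: the paper obtains this corollary by combining Theorem~\ref{thmequiv2} with Corollary~\ref{cor11} (itself Theorem~\ref{thm1} plus Proposition~\ref{prop1.1} and the trivial inequality $m^*\le m$), which is exactly the chain you spell out, together with the routine lower-bound computation for $\delta_d(\aaa(n,k,s))$ that the paper treats as obvious. The floor/ceiling bookkeeping for $s\approx xn$ is glossed over in the paper as well and is harmless given the strict inequality $\frac 53(k-d)x-\frac 23 x<1$.
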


We can say slightly more about $m_d(n,k)$.
\begin{prop}\label{prop33}
  We have $\limsup_{n\to\infty} m_d^*(n,k)/{n-d\choose k-d} \le 1-\frac{\gamma-1}2\cdot\frac{5k-2}{\gamma k-(\gamma-1)}(1-1/k)^{k-d}$ provided $d> \frac{\gamma-1}{\gamma}(k-1)$. If additionally $\frac{\gamma-1}2\cdot\frac{5k-2}{\gamma k-(\gamma-1)}(1-1/k)^{k-d}\ge \frac 12$ then \begin{equation}\label{eqdirac}\limsup_{n\to\infty} m_d(n,k)/{n-d\choose k-d}=\frac 12.\end{equation} Moreover, we know the exact value of $m_d(n,k)$ for all $n\ge n_0$. In particular, \eqref{eqdirac} holds for all $d\ge 3k/8$.
\end{prop}

\begin{proof}[Proof of Proposition~\ref{prop33}] The first part of the statement follows from Theorem~\ref{thmemcgen} and Proposition~\ref{prop1.1}, using that $n-d=sk-d\ge \gamma(k-d)s+(\gamma-1)s$ is equivalent to $d> \frac {\gamma-1}{\gamma}(k-1)$ in the limit $s\to\infty$. (Also recall that ${n-s-d\choose k-d}/{n-d\choose k-d}\to (1-1/k)^{k-d}$ as $n\to \infty$.) The second part directly follows from the first part of the proposition and Theorem~\ref{thmequiv}. The final conclusion that \eqref{eqdirac} holds for $d\ge 3k/8$ follows from the fact that for $d= 3k/8$ we may find $\gamma$ such that both $d> \frac{\gamma-1}{\gamma}(k-1)$ and $(\gamma-1)\frac{5k-2}{\gamma k-(\gamma-1)}(1-1/k)^{k-d}> 1$ hold. This can be verified by a computer-aided computation.
\end{proof}

We note that, although the determination of $m_d(n,k)$ asymptotically reduces to the corresponding fractional problem, other methods are needed to determine $m_d(n,k)$ {\it exactly} (see. e.g., \cite{HPS, TZ13, TZ16}). The main technique used for this group of problems is {\it absorption}. On a very high level, one searches for small subfamilies in the original family, which, once an almost-spanning matching is found, can be used to cover {\it any} small remainder by a perfect matching. This is a very powerful technique, which allows to find much more general structures. One remarkable example of the use of absorption is the second proof of the existence of combinatorial designs given by Glock, Lo, K\"uhn and Osthus \cite{GLKO}. (This result was first proved by Keevash \cite{Kee} using other methods.)

\subsection{Other combinatorial applications}\label{sec13}
There are several other problems in which the EMC plays an important role. In particular, results on fractional version of the EMC were used by Alon, Huang and Sudakov \cite{AHS} to prove the Manickam-Mikl\'os-Singhi conjecture for $n\ge 33k^2$. They also note that, as was pointed out by Ruci\'nski, the Manickam-Mikl\'os-Singhi conjecture is actually equivalent to a variant of fractional version of the EMC.

We say that families $\ff_1,\ldots, \ff_{s+1}$ are {\it cross-dependent}, if there are no $F_i\in\ff_i$, $i=1,\ldots, s+1$, such that $F_1,\ldots, F_{s+1}$ are pairwise disjoint. In \cite{HLS}, one of the main ingredients of the proof of the EMC for $n\ge 3k^2s$ was the result stating that if for some $n\ge (s+1)k$ the families $\ff_1,\ldots, \ff_{s+1}\subset{[n]\choose k}$ are cross-dependent, then $\min_{i}|\ff_i|\le s{n-1\choose k-1}$. They asked whether an analogue of \eqref{eqemc0} always holds for cross-dependent families. They could prove it for $n\ge 3k^2s$. Keller and Lifshitz \cite{KLchv} proved it for $n\ge f(s) k$ with some $f(s)$. Unfortunately, the proof of the first author \cite{F4}, as well as the proof of the present result, breaks for cross-dependent families. We have recently showed this for $n>12ks\log(e^2s)$ \cite{FK17}, and it was announced by
Keevash, Lifshitz, Long, and Minzer that this holds for $n>Csk$ with some large $C$ as a consequence of general sharp threshold-type results. Several questions in this spirit were independently asked by Aharoni and Howard \cite{AhH}.\vskip+0.1cm

{\bf Remark. } Since the appearance of the first version of this paper, the paper of Keevash, Lifshitz, Long and Minzer has appeared \cite{KLLM}. The second author of this paper has also managed to extend the ideas of the present proof to the rainbow EMC, proving it for $n>3esk$ and $s>10^7$. Together with the result of \cite{FK17}, this leads to the resolution of the rainbow EMC for all $s$ and $n>300sk$.\vskip+0.1cm


The EMC was used in the study of the non-uniform analogue of the EMC due to Erd\H os and Kleitman \cite{Kl}. For recent progress, see \cite{FK8}--\cite{FK7}, and especially \cite{FK8} for the connection between the uniform and the non-uniform problems.

Among other applications of the EMC, let us point out that the EMC was used in \cite{OY} and \cite{FK7} to obtain progress in the following question: what is the maximum number of (non-empty) colors one can use in the coloring of ${[n]\choose k}$ without forming  an $(s+1)$-matching of sets of pairwise distinct colors. In other words, what is the maximum $t$, such that ${[n]\choose k}=\ff_1\sqcup \ldots \sqcup \ff_t$, where all $\ff_i$ are non-empty and any $s+1$ of them are cross-dependent?

\subsection{Deviations of sums of nonnegative variables}\label{sec14}
 Assume that $X_1,\ldots, X_k$ are nonnegative independent, identically distributed random variables with mean $x<1/k$. Put $\mathbf X:=(X_1,\ldots, X_k)$. Put
$$p_k(x):=\sup_{\mathbf X}\Pr[X_1+\ldots +X_k\ge 1].$$
The value of $p_2(x)$ was determined by Hoeffding and Shrikhande \cite{HS}.  \L uczak, Mieczkowska and \v Sileikis~\cite{LMS} proposed the following conjecture, which states that for every positive $k$ and $0\le x\le 1/k$ we have
\begin{equation}\label{conjsam}
 p_k(x)=\max\{1-(1-x)^k, (kx)^k\}.
          \end{equation}
We note that it is easy to see that $p_k(x)=1$ for $x\ge 1/k$. The authors of \cite{LMS} proved the equivalence of \eqref{conjsam} and \eqref{emc4}, which implied that \eqref{conjsam} is true for $k=3$ and any $x$, as well as for any $k$ and $x\le \frac 1{2k-1}$.  Theorem~\ref{thm1}, combined with the aforementioned equivalence, immediately implies the following corollary.
\begin{cor}
  The equality \eqref{conjsam} holds for any $x\le \frac 3{5k-2}$.
\end{cor}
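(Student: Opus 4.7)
The plan is to route through the equivalence (established in \cite{LMS}) between the probabilistic conjecture \eqref{conjsam} and the asymptotic fractional version \eqref{emc4} of the Erd\H{o}s Matching Conjecture. Once that equivalence is invoked, it suffices to verify \eqref{emc4} for every fixed $k\ge 2$ and every $x \le \tfrac{3}{5k-2}$, and this is a direct consequence of Theorem~\ref{thm1}.

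To verify \eqref{emc4} in this range, I would argue as follows. Since a fractional matching is a relaxation of an integer matching, $\nu^*(\ff)\ge \nu(\ff)$, so $m^*(n,k,s)\le m(n,k,s)$ for every $n,k,s$. Setting $s=\lfloor xn\rfloor$ with $x\le \tfrac{3}{5k-2}$, the inequality $n\ge \tfrac{5}{3}sk-\tfrac{2}{3}s$ becomes $1\ge x\cdot\tfrac{5k-2}{3}$, which holds by assumption; and $s\to\infty$ as $n\to\infty$, so eventually $s\ge s_0$ and Theorem~\ref{thm1} applies. Hence
\[
 m^*(n,k,\lfloor xn\rfloor)\;\le\; m(n,k,\lfloor xn\rfloor)\;=\;\binom{n}{k}-\binom{n-\lfloor xn\rfloor}{k}.
\]
Dividing by $\binom{n}{k}$ and letting $n\to\infty$ yields $\limsup_{n\to\infty}m^*(n,k,xn)/\binom{n}{k}\le 1-(1-x)^k$. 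The matching lower bound comes from the family $\aaa(n,k,\lfloor xn\rfloor)$, which has $\nu^*<\lfloor xn\rfloor+1$ and density tending to $1-(1-x)^k$.

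It remains to check that in the regime $x\le \tfrac{3}{5k-2}$ the maximum in \eqref{emc4} is realised by $1-(1-x)^k$, i.e.\ that $1-(1-x)^k\ge (kx)^k$. At $x=\tfrac{3}{5k-2}$ one has $kx=\tfrac{3k}{5k-2}<3/5$, so $(kx)^k\le (3/5)^k$, whereas $1-(1-x)^k\ge 1-e^{-kx}\ge 1-e^{-3/5}>(3/5)^k$ already for $k\ge 2$; the case-by-case check for small $k$ (where the crossover between $1-(1-x)^k$ and $(kx)^k$ occurs at an $x$ strictly larger than $\tfrac{3}{5k-2}$) is elementary. Combining this with the previous display gives \eqref{emc4} for all $x\le \tfrac{3}{5k-2}$, and the equivalence from \cite{LMS} then delivers \eqref{conjsam} in the same range, which is the corollary.

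The argument has no real obstacle: the whole content lies in Theorem~\ref{thm1}, and the only things to watch are the routine bookkeeping that the hypothesis $s\ge s_0$ is compatible with the $n\to\infty$ limit, and the elementary comparison between $1-(1-x)^k$ and $(kx)^k$ which is needed to identify which term in the maximum of \eqref{emc4} is active in the relevant range.
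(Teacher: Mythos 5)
Your proposal is essentially the paper's own argument: the paper obtains this corollary exactly by combining Theorem~\ref{thm1} (which, for $x\le\frac{3}{5k-2}$, i.e.\ $n\ge\frac53 sk-\frac23 s$, gives $m^*(n,k,s)\le m(n,k,s)=\binom{n}{k}-\binom{n-s}{k}$) with the equivalence of \eqref{conjsam} and \eqref{emc4} proved in \cite{LMS}, just as you do. The only blemish is the slip $\frac{3k}{5k-2}<\frac35$ (in fact this quantity exceeds $\frac35$ and decreases to it), but the needed comparison $1-(1-x)^k\ge(kx)^k$ in this range still holds -- it is in any case automatic from Theorem~\ref{thm1} together with the lower bound given by $\aaa_0(k,s)$ -- so the conclusion is unaffected.
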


Actually, the idea to relate \eqref{emc4} and conjectures similar to \eqref{conjsam} appeared already in \cite{aletal}, \cite{AHS}. The following general conjecture was stated by Samuels \cite{Sam66}. Let $X_1,\ldots, X_k$ be independent random variables with means $x_1\le \ldots \le x_k$. Assume that $\sum_{i=1}^k x_i<1$ and let
$$p(x_1,\ldots,x_k):= \sup_{X_1,\ldots,X_k}\Pr[X_1+\ldots +X_k\ge 1].$$
Put $\bar p_k(x):=p(x,\ldots,x)$. Note that the difference between $p_k(x)$ and $\bar p_k(x)$ is that in the latter we do not require the random variables to be identically distributed. Thus, clearly, $\bar p_k(x)\ge p_k(x)$. Samuels \cite{Sam66} conjectured that for all admissible $x_1,\ldots, x_k$
\begin{equation}\label{conjsam2}
  p(x_1,\ldots,x_k)=\max_{t=0,\ldots,k-1}1-\prod_{i=t+1}^k\Big(1-\frac{x_i}{1-\sum_{j=1}^tx_j}\Big).
\end{equation}
It is not difficult to come up with the example of random variables that show the ``$\ge$''-part of \eqref{conjsam2}. Moreover, as it is shown in \cite{aletal}, for $x_1=\ldots = x_k=:x$ with $0\le x\le 1/(k+1)$, the maximum of the right hand side is attained for $t=0$, which suggests the following conjecture: for any $0\le x\le 1/(k+1)$, we have
\begin{equation}\label{conjsam3}
  \bar p_k(x)=1-(1-x)^k.
\end{equation}
Samuels \cite{Sam66,Sam68} verified \eqref{conjsam2} for $k\le 4$, which means that \eqref{conjsam3} and \eqref{conjsam} are valid for $k\le 4$, $x\le 1/(k+1)$. Combined with the equivalence of \eqref{conjsam} and Theorem~\ref{thmequiv2}, we get the following corollary.
\begin{cor}
  The equalities \eqref{emc3} and \eqref{emc4} hold for $k=4$, $x\le 1/5$.
\end{cor}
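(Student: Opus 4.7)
The plan is to chain three ingredients already assembled in this paper. First I use Samuels' verification of \eqref{conjsam2} for $k\le 4$ \cite{Sam66,Sam68}: specialising to $x_1=\dots=x_4=x$, the right-hand side of \eqref{conjsam2} is maximised at $t=0$ as long as $x\le 1/(k+1)=1/5$ (this is the observation recalled from \cite{aletal} just before \eqref{conjsam3}), so $\bar p_4(x)=1-(1-x)^4$ throughout that range. Since the identically distributed case is a special case of the general one, $p_4(x)\le \bar p_4(x)$; conversely, taking each $X_i$ to be Bernoulli with $\Pr[X_i=1]=x$ achieves $\Pr[X_1+\dots+X_4\ge 1]=1-(1-x)^4$, forcing equality $p_4(x)=1-(1-x)^4$ for every $x\le 1/5$.

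Next, a direct check shows $1-(1-x)^4\ge (4x)^4$ throughout $0\le x\le 1/5$: at $x=1/5$ the two sides are $369/625$ and $256/625$ respectively, and the function $f(x):=1-(1-x)^4-(4x)^4$ has a unique critical point in $(0,1/5)$, so is unimodal on $[0,1/5]$ with $f(0)=0$ and $f(1/5)>0$. Hence $p_4(x)=1-(1-x)^4=\max\{1-(1-x)^4,(4x)^4\}$ for $x\le 1/5$, which is exactly \eqref{conjsam} for $k=4$ in this range. Invoking the equivalence between \eqref{conjsam} and the asymptotic fractional EMC \eqref{emc4} established in \cite{LMS} then yields \eqref{emc4} for $k=4$ and $x\le 1/5$.

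Finally, to pass from \eqref{emc4} to \eqref{emc3} I apply Theorem~\ref{thmequiv2} with $d=0$, which asserts that the fractional and integer asymptotic densities agree whenever $x<1/k$; since $1/5<1/4$, this immediately yields \eqref{emc3} in the stated range. The argument is a routine concatenation of equivalences, with no real obstacle: the only mildly delicate point is verifying that $t=0$ is the maximiser in Samuels' formula throughout $0\le x\le 1/5$, but this is elementary and already recorded in \cite{aletal}; all the heavy lifting --- Samuels' theorem, the \L uczak--Mieczkowska--\v Sileikis equivalence, and the fractional-to-integer transfer of Theorem~\ref{thmequiv2} --- is imported from existing work.
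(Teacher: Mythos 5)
Your proof follows essentially the same chain as the paper's: Samuels' theorem gives \eqref{conjsam} for $k=4$, $x\le 1/5$, the \L uczak--Mieczkowska--\v Sileikis equivalence transfers this to \eqref{emc4}, and Theorem~\ref{thmequiv2} with $d=0$ upgrades \eqref{emc4} to \eqref{emc3}. Your explicit verification that $1-(1-x)^4\ge(4x)^4$ on $[0,1/5]$ merely spells out a routine check the paper leaves implicit in asserting that \eqref{conjsam} holds in that range.
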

Comparing with our main result, a similar corollary of Theorem~\ref{thm1} (more precisely, its ``optimized'' version, Theorem~\ref{thm1v2}) would imply \eqref{emc3} and \eqref{emc4} (and also \eqref{eqemc0} and \eqref{emc2}) for $x\le 0.18$.

The case $x=1/(k+\delta)$, where $\delta>0$ is meant to be a small constant, of \eqref{conjsam3} was studied by Feige \cite{Fe06} in the context of some algorithmic applications, in particular, estimating the average degree of a graph. He managed to prove the following bound:
\begin{equation}\label{eqfe}
  \bar p_k\big(\frac 1{k+\delta}\big)\le \frac {12}{13} \ \ \ \ \ \text{for any }\delta\ge \frac 1{12}.
\end{equation}
In particular, this bound, together with the aforementioned equivalences, implies that for any $\eps>0$ there exist $s_0$, such that for all $s\ge s_0$ and $\delta\ge 1/12$ we have
\begin{equation}\label{eqemcgen2}
  m\big(n,k,\frac n{k+\delta}\big)\le \Big(\frac {12}{13}+\eps\Big){n\choose k}.
\end{equation}
This is much stronger than the bound \eqref{eqemcbad} for large $k$. (The latter implies $m(n,k,\frac n{k+\delta})\le \frac {k}{k+\delta}{n\choose k}$.) Later, the bound \eqref{eqfe} for $\delta =1$ was improved in \cite{HZZ} to $\bar p_k\big(\frac 1{k+1}\big)\le \frac 78$.

\section{Proof of Theorem~\ref{thmequiv2}}\label{secnibble}
Our proof of this theorem follows the same steps as the proof of Theorem~\ref{thmequiv}. Fix some small $\eps>0$. Put $c^*_{x+\eps}:=\limsup_{n\to \infty} m^*_d(n,k,(x+\eps)n)/{n-d\choose k-d}$. Assume that $n_0=n_0(\eps)$ is large enough and take $n\ge n_0$. Consider a family $\ff\subset {[n]\choose k}$ satisfying $\delta_d(\ff)>(c^*_{x+\eps}+\eps){n-d\choose k-d}$. We use the following claim, used in \cite{aletal} to prove Theorem~\ref{thmequiv}. Note that the $o(1)$-notation is with respect to $n\to\infty$.

\begin{cla}[\cite{aletal}]
  There exist sets $R^i\subset [n]$, $i=1,\ldots, n^{1.1}$, such that the families $\ff_i:=\{F\in\ff: F\subset R^i\}$ satisfy the following conditions.
  \begin{itemize}
    \item[(i)] For every $v\in [n]$, the number $Y_v$ of sets $R^i$ containing $v$ is $(1+o(1))n^{0.2}$,
    \item[(ii)] every pair $u,v\in[n]$ is contained in at most two sets $R^i$,
    \item[(iii)] every set $F\in \ff$ is contained in at most one set $R^i$,
    \item[(iv)] for all $i=1,\ldots, n^{1.1}$, we have $|R^i|=(1+o(1))n^{0.1}$ and
    \item[(v)] for all $i=1,\ldots, n^{1.1}$ we have $\delta_d(\ff_i)\ge (c^*_{x+\eps}+\eps/2){|R^i|-d\choose k-d}$.
  \end{itemize}
\end{cla}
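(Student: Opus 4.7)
The plan is a direct random construction plus concentration, with a small cleanup at the end. Set $r := \lceil n^{0.1}\rceil$ and $N := \lceil n^{1.1}\rceil$, and take each $R^i$ ($1\le i\le N$) to be an independent uniformly random $r$-subset of $[n]$. Property (iv) is then automatic. For (i), $Y_v$ is a sum of $N$ independent Bernoulli$(r/n)$ indicators with $\E[Y_v]=Nr/n=(1+o(1))n^{0.2}\to\infty$, so Chernoff yields $Y_v=(1+o(1))n^{0.2}$ with probability $1-o(1/n)$; union-bounding over $v\in[n]$ secures (i) with high probability.

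For (ii), the expected number of pairs $\{u,v\}$ contained in three distinct sets $R^i$ is at most
$$\binom{n}{2}\binom{N}{3}\left(\frac{r(r-1)}{n(n-1)}\right)^3 = O(n^{-0.1}) = o(1),$$
so (ii) holds outright w.h.p. For (iii), the expected number of $F\in\ff$ contained in two $R^i$'s is at most $|\ff|\binom{N}{2}\bigl(\binom{r}{k}/\binom{n}{k}\bigr)^2 = O(n^{2.2-0.8k}) = o(1)$ for $k\ge 3$, which is the only range of interest (the $k=2$ case of Theorem~\ref{thmequiv2} reduces to classical results). Sporadic violations of (ii) or (iii) can be absorbed by discarding a negligible number of offending $R^i$'s without disturbing (i) or (v).

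The heart of the argument is (v). Fix a $d$-set $D$ and an index $i$. Conditional on $D\subset R^i$, the remainder $R^i\setminus D$ is a uniform $(r-d)$-subset of $[n]\setminus D$, and $\deg_{\ff_i}(D)$ counts the $(k-d)$-subsets of $R^i\setminus D$ whose union with $D$ lies in $\ff$. The minimum $d$-degree hypothesis forces the conditional expectation to be at least $(c^*_{x+\eps}+\eps)\binom{r-d}{k-d}$. Since $\deg_{\ff_i}(D)$ is a bounded-difference polynomial of degree $k-d$ in the vertex indicators of $R^i$, an Azuma--Hoeffding or Kim--Vu style concentration inequality gives a deviation probability of $\exp(-n^{\Omega(1)})$, which beats a union bound over the $\binom{n}{d}\cdot N\le n^{d+1.1}$ relevant pairs $(D,i)$. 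Intersecting all these good events produces, with positive probability, a configuration satisfying (i)--(v).

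The principal technical obstacle is calibrating the tail bound in (v): the single-vertex influence on $\deg_{\ff_i}(D)$ can be as large as $\binom{r-d-1}{k-d-1}$, comparable to the scale $\eps\binom{r-d}{k-d}$ at which concentration is needed. A naive bounded-differences estimate therefore does not quite suffice, and one must either invoke Kim--Vu-style polynomial concentration or exploit the fact that the typical vertex influence is much smaller than the worst case in order to obtain the required exponentially small failure probability.
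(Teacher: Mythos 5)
The paper does not prove this claim at all: it is imported verbatim from \cite{aletal}, and the construction there is exactly the one you propose (take $n^{1.1}$ independent uniformly random subsets of size $n^{0.1}$ and verify (i)--(v) by concentration plus union bounds). So your route is essentially the original proof, and your treatment of (i)--(iv) and of the conditional expectation in (v) is correct, including the observation that for $k\ge 3$ the expected number of violations of (ii) and (iii) is $o(1)$, so w.h.p.\ no cleanup is even needed.

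The one point to fix is your closing paragraph: the ``principal technical obstacle'' you describe is not there. The one-vertex influence on $\deg_{\ff_i}(D)$ is at most (twice) $\binom{r-d-1}{k-d-1}$, while the deviation you need is $\frac{\eps}{2}\binom{r-d}{k-d}=\frac{\eps(r-d)}{2(k-d)}\binom{r-d-1}{k-d-1}$ with $r=(1+o(1))n^{0.1}$ and $k,d,\eps$ fixed; these differ by a factor $\Theta(\eps r)$, so they are not comparable. The plain vertex-exposure martingale for sampling $R^i\setminus D$ without replacement has $r-d$ steps with increments bounded by $O\big(\binom{r-d-1}{k-d-1}\big)$, and Azuma--Hoeffding gives failure probability
\begin{equation*}
\exp\Big(-\Omega\Big(\tfrac{\eps^2 (r-d)}{(k-d)^2}\Big)\Big)=\exp\big(-\Omega_{\eps,k}(n^{0.1})\big),
\end{equation*}
which is superpolynomially small and beats the union bound over the at most $\binom{n}{d}n^{1.1}\le n^{d+1.1}$ pairs $(D,i)$. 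So no Kim--Vu-type polynomial concentration or typical-influence refinement is required; the naive bounded-differences estimate you set up already closes the argument.
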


Next, still following \cite{aletal}, we use (v) and find fractional matchings $w^i:\ff_i\to [0,1]$ of size at least $(x+\eps)|R^i|$ for $\ff_i$, $i=1,\ldots, n^{1.1}$. We construct a random family $\h$ by including  $F\in\ff_i$ with probability $w^i(F)$. (Note that this procedure is well-defined due to (iii).) The family $\h$ with high probability has the following properties.
\begin{itemize}
  \item[\textbf A] $\Delta(\h)\le(1+o(1)) n^{0.2}.$
  \item[\textbf B] the number of edges containing any two given vertices is at most $n^{0.1}$.
  \item[\textbf C] the average degree of a vertex in $\h$ is at least $(x+\eps+o(1))k n^{0.2}.$
\end{itemize}

The verification of \textbf A and \textbf B is done as in \cite{aletal}, while \textbf C is easy to obtain, since the expected number of edges in $\h$ is $\sum_{i=1}^{n^{1.1}}\sum_{F\in \ff_i}w^i(F)\ge n^{1.1}\cdot (x+\eps)|R^i|\ge (x+\eps+o(1))x n^{1.2},$ and it is highly concentrated around the mean (easily verified via Chernoff-type bounds).

The only twist we have to add to the proof of the authors of \cite{aletal} is the following useful generalization of the theorem due to Frankl and R\" odl \cite{FR85} and Pippenger and Spencer \cite{PS89}. In what follows,  a {\it $k$-uniform hypergraph} is used in almost the same sense as a family $\ff\subset{[n]\choose k}$, with the only difference that a hypergraph may have multiple edges. A {\it codegree} of two vertices in a hypergraph is the number of edges containing both of them (counted with multiplicities). Let $d(\h)$ be the average degree of $\h$.

\begin{thm}\label{thmnewnibble}
  For any $k\in \mathbb N$ and $\delta>0$ there exists $n_0$ and $\eps>0$, such that the following holds for any $n\ge n_0$. Let $\h$ be an $n$-vertex $k$-uniform hypergraph. If $d(\h)/\Delta(\h)=c>0$ and the codegree of any two vertices $v,w$ in $\h$ is at most $\eps \Delta(\h)$, then there exists a matching $\mathcal M\subset \h$ covering at least a $c-\delta$-proportion of vertices of $[n]$.
\end{thm}
We prove this theorem in the next subsection, using the method from \cite[Theorem~2.13]{KKKO}. We note that, using the same argument, one may prove an obvious common generalization of Theorem~\ref{thmnewnibble} and \cite[Theorem~2.13]{KKKO}, however, this is not needed for our purposes. Let us now finish the proof of Theorem~\ref{thmequiv2}.

Using Theorem~\ref{thmnewnibble}, we conclude that $\h$ contains a matching covering at least an $xk$-proportion of vertices, provided $n$ is sufficiently large. In other words, $c_x:=\limsup_{n\to\infty} m_d(n,k,xn)/{n-d\choose k-d}\le c_{x+\eps}^*+\eps$. On the other hand, of course, $c_x\ge c^*_x$. We can make $\eps$ arbitrarily small, and thus we conclude that $c_x=c^*_x$, provided that $c^*_x$ is continuous as a function of $x$, $x\in (0,1/k)$. This is proven in the next lemma.

\begin{lem}
  The function $c_x^*$ is monotone and continuous as a function of $x$, where $x\in [0,1/k)$.
\end{lem}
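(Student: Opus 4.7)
The lemma has two parts, one trivial and one requiring a careful construction.

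\emph{Monotonicity.} This is immediate from the definitions. If $0 \le x \le x' < 1/k$, then any family $\ff \subset \binom{[n]}{k}$ satisfying $\nu^*(\ff) < xn + 1$ also satisfies $\nu^*(\ff) < x'n + 1$, so $m^*_d(n, k, \lfloor xn \rfloor) \le m^*_d(n, k, \lfloor x'n \rfloor)$. Dividing by $\binom{n-d}{k-d}$ and taking $\limsup_{n\to\infty}$ yields $c^*_x \le c^*_{x'}$.

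\emph{Continuity.} Since $c^*$ is nondecreasing and bounded in $[0,1]$, it is enough to rule out jump discontinuities at each $x \in (0, 1/k)$. I will focus on right-continuity, showing that $c^*_{x+\eta} \le c^*_x + o(1)$ as $\eta \to 0^+$; the left-continuity argument is analogous. Fix $\eta > 0$ small and take $\ff_n \subset \binom{[N_n]}{k}$ with $N_n\to\infty$, $\nu^*(\ff_n) < (x+\eta)N_n + 1$, and $\delta_d(\ff_n)/\binom{N_n-d}{k-d} \to c^*_{x+\eta}$. The plan is to transform each $\ff_n$ into a family $\widetilde\ff_n$ on an expanded ground set $[N'_n]$ with $N'_n = \lceil (1+\eta/x)N_n \rceil$, satisfying
\[
\nu^*(\widetilde\ff_n) < xN'_n + 1 \quad\text{and}\quad \delta_d(\widetilde\ff_n)\big/\binom{N'_n-d}{k-d} \ge c^*_{x+\eta} - O(\eta).
\]
Such a family certifies $c^*_x \ge c^*_{x+\eta} - O(\eta)$, and sending $\eta\to 0^+$ gives the result.

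The construction of $\widetilde\ff_n$ embeds $\ff_n$ using only the first $N_n$ vertices and augments it with a ``booster'' sub-family on the $N'_n - N_n$ new vertices. The choice $xN'_n = (x+\eta)N_n$ provides the exact slack needed to absorb the booster's matching-number contribution, while the density ratio $\binom{N_n-d}{k-d}/\binom{N'_n-d}{k-d}$ converges to $1$ at rate $1 - O(\eta)$, so no asymptotic density is lost from the rescaling. The booster has to keep $\nu^*(\text{booster}) = O(\eta N_n)$ while providing, for every $d$-subset of $[N'_n]$ that touches the new vertices, at least a $(c^*_{x+\eta} - O(\eta))$-fraction of its possible neighborhoods in $\widetilde\ff_n$.

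The main obstacle is designing this booster so that the two constraints — low contribution to $\nu^*$ and high $\delta_d$ across new $d$-sets — are simultaneously met. A naive booster taking all $k$-sets that meet the new vertices has $\nu^*$ of order $N'_n - N_n = \eta N_n / x$, which is of the same order as the available slack and fails by a constant factor. A better strategy adds only sets sharing a common sub-structure (so that $\nu^*$ grows like $O(1)$ per common sub-structure), and uses Proposition~\ref{prop1.1} to reduce the case $d \ge 1$ to a $d = 0$ booster problem on $(k-d)$-sets over the new vertices, for which the rescaling argument is transparent. The residual density loss is absorbed into the $O(\eta)$ error term, and the continuity follows after letting $\eta \to 0^+$.
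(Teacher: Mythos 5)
Your monotonicity argument is fine, but the continuity half is a plan rather than a proof, and the plan's crux --- the ``booster'' --- is never constructed and faces obstructions that your own accounting hides. With $N'_n=\lceil(1+\eta/x)N_n\rceil$ you have $xN'_n+1-\bigl((x+\eta)N_n+1\bigr)\le x$, i.e.\ the constraint $\nu^*(\widetilde\ff_n)<xN'_n+1$ leaves only $O(1)$ slack beyond the known bound on $\nu^*(\ff_n)$, not the $\Theta(\eta N_n)$ of slack your text asserts this choice ``provides''. Meanwhile, since edges of $\ff_n$ avoid the new vertices, every neighborhood of a $d$-set touching a new vertex must come from the booster; for $d\ge1$ and $c^*_{x+\eta}$ bounded below by a positive constant (the $\aaa(n,k,s)$-type construction gives $c^*_x\ge 1-(1-x)^{k-d}$), a greedy argument (each required star has $\Omega\bigl(\binom{N'_n-d}{k-d}\bigr)$ edges, so one can repeatedly pick an edge through a fresh new vertex avoiding the $O(\eta N_n)$ vertices already used) shows the booster alone contains an integral matching of size $\Theta(\eta N_n)$. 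So the booster is a large-matching object that must somehow be added at essentially zero cost to $\nu^*$ even when $\nu^*(\ff_n)$ is within $O(1)$ of $(x+\eta)N_n$ --- the worst case you must handle --- and enlarging $N'_n$ to $(1+t)N_n$ only buys slack $\approx(tx-\eta)N_n$ against a cost that scales with the $tN_n$ new vertices, with $x<1/k$ working against you. Neither of your suggested fixes meets the degree requirement: booster edges sharing a common sub-structure cannot supply the degree of a $d$-set that contains a new vertex but avoids that sub-structure (such a $d$-set sees only an $O(1/N'_n)$-fraction of its potential neighborhoods), and Proposition~\ref{prop1.1} is an upper bound on $m^*_d$, not a device for building families of large minimum $d$-degree, so it does not reduce the booster problem to $d=0$.

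For comparison, the paper proves the same type of one-sided estimate (for every $\eps>0$ there is a uniform $\delta$ with $c^*_{x-\delta}\ge c^*_x-\eps k$, which together with monotonicity yields continuity), but by \emph{pruning on a fixed ground set} rather than diluting with new vertices: by LP duality one takes a fractional cover $w:[n]\to[0,1]$ with $w([n])\le xn$ and $w(F)\ge1$ for all $F\in\ff$, averages the weights on the last $d$ vertices, lowers to that average the weights of an $\eps$-fraction of the heavy vertices (those with $w(i)\ge1/k$, of which there are at least $xn-1$), and keeps only the sets whose modified cover-weight is still at least $1$. This decreases $\nu^*$ by $\Theta(\eps n)$ while deleting, from the neighborhood of the minimizing $d$-set, at most $\eps s\binom{n-d-1}{k-d-1}\le\eps k\binom{n-d}{k-d}$ edges. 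Deleting edges can only help the matching constraint and the loss in degree is localized, which is exactly what the dilution route lacks; if you want to salvage your write-up, switch to this pruning direction.
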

We remark that the same proof would work for $x=1/k$ and $d=0$.
In this respect, $c_x^*$ and $c_x$ behave differently, since $c_x$ is not continuous at $1/k$ (due to the parity-based constructions, see the discussion in the introduction).
\begin{proof}
  The monotonicity is obvious. Fix some $x\in (0,1/k)$. We show that, for any $\eps>0$, there exists $\delta$, such that $c^*_{x-\delta}\ge c^*-\eps k$. To do so, it is clearly sufficient to show that, given a family $\ff\subset {[n]\choose k}$ satisfying $xn-1\le \nu^*(\ff)\le xn$, we can obtain a family $\ff'\subset {n\choose k}$, such that $\nu^*(\ff')\le (x-\delta)n$ and $\delta_d(\ff')\ge \delta_d(\ff')-\eps{n-d\choose k-d}$. Let us take sufficiently large $n$ depending on $x,\eps, \delta$. For simplicity, we assume that $xn$ is an integer.

  Consider $\ff$ as above and, using LP-duality, consider the function $w:[n]\to [0,1]$, such that $\sum_{i\in [n]}w(i)\le xn$ and $w(F):=\sum_{i\in F}w(i)\ge 1$ for each $F\in\ff$. Without loss of generality, assume that $w(1)\ge w(2)\ge\ldots\ge w(n)$. We may also assume that $\ff = \big\{F\in {[n]\choose k}:w(F)\ge 1\big\}$.  Let $s+1$ be the smallest index $i$ such that $w(i)<1/k$. Clearly, $s\ge xn-1$. Indeed, if it is not true, then any set in $\ff$ intersects the  first $xn-2$ elements. Thus, the  function assigning weight $1$ to each of the elements $1,\ldots, xn-2$ and $0$ to others would be a fractional covering function for $\ff$, contradicting the assumption $\nu^*(\ff)\ge xn-1$. If $d\ge 1$, then we do the following preprocessing. Replace each of $w(n-d+1),\ldots, w(n)$ with $\phi:=\sum_{i=n-d+1}^n w(i)/d$ and redefine $\ff$ with respect to these weights. If $d=0$ then put $\phi:=0$. It is easy to see that this operation does not decrease the minimal $d$-degree. Let us also note that $\phi\le x <1/k$.

  Define the weight function $w'(i)$ by putting $w'(i):=w(i)$ for $i\in [(1-\eps)s]\cup [s+1,n]$ and $w'(i):=\phi$ for $i\in [(1-\eps)s+1,s]$. Define $\ff':=\{F\in {[n]\choose k}:w'(F)\ge 1\}$. Note that $\nu^*(\ff')\le w'([n])\le xn-\eps(1/k-\phi)s\le (x-\delta)n$ for $\delta:= \eps x(1/k-\phi)/2$. Moreover, the minimal $d$-degree of $\ff'$ is still achieved on the $d$-subset $[n-d+1,n]$, and is at least $\delta_d(\ff)-\eps s{n-d-1\choose k-d-1}\ge \delta_d(\ff)-\eps k{n-d\choose k-d}$. (We simply used the fact that the number of sets containing both $[n-d+1,n]$ and some $i\in [(1-\eps)s+1,s]$ is at most $\eps s{n-d-1\choose k-d-1}$.) This concludes the proof.
\end{proof}

\subsection{Proof of Theorem~\ref{thmnewnibble}}
Let $\delta(\h)$ stand for the minimal degree of $\h$. We use the following theorem due to Pippenger and Spencer.
\begin{thm}[\cite{PS89}] \label{lem: Pippenger}
For any $k\in \mathbb N$ and $\delta>0$ there exists $n_0$ and $\eps>0$, such that the following holds for any $n\ge n_0$. Let $\h$ be an $n$-vertex $r$-uniform hypergraph satisfying $\delta(\h)\geq (1-\eps) \Delta(\h)$ and the codegree of any two vertices in $\h$ is at most $\eps \Delta(\h)$, then $\h$ can be partitioned into $(1+\delta)\Delta(\h)$ matchings.
\end{thm}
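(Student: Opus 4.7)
The plan is to establish the theorem via the Rödl nibble (semi-random) method, iteratively extracting near-perfect matchings from $\h$ and assigning each its own fresh color. After roughly $\Delta(\h)$ outer rounds the residual hypergraph will have maximum degree dropped to $o(\Delta)$, at which point a recursive application of the theorem (with a smaller slack parameter) finishes the coloring with a few additional colors, bringing the total to at most $(1+\delta)\Delta(\h)$ matchings.

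The central tool is the classical nibble lemma, which I would prove first: any $r$-uniform hypergraph $\h'$ that is $(1-\eta)$-almost regular with maximum degree $D$ and pair codegrees at most $\eta D$ admits a matching $M$ covering all but an $f(\eta)$-fraction of its vertices, where $f(\eta)\to 0$ as $\eta\to 0$. This is obtained through many internal nibble rounds: in each internal round one selects each remaining edge independently with a small probability $p/D$, keeps the selected edges that have no conflicting neighbor, adds them to $M$, and deletes the matched vertices. Concentration of residual degrees and pair codegrees after each internal round is provided by Azuma's (or Talagrand's) inequality; the union bound over $O(n^2)$ vertex/pair events is absorbed by the exponential tails because $n\ge n_0$.

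Iterating the near-perfect matching extraction for $T \approx (1-\delta/2)\Delta$ outer rounds uses $T$ colors; a first-moment accounting then shows that each vertex's degree drops by exactly one per round in which it is matched, and by zero in at most an $f(\eta)$-fraction of rounds, so the maximum residual degree after $T$ rounds is at most $(\delta/2)\Delta$ and pair codegrees remain $o(\Delta)$. Applying the theorem recursively to this residual with a smaller slack $\delta'$ contributes at most $(1+\delta')(\delta/2)\Delta$ additional colors; choosing $\delta'$ small enough keeps the grand total below $(1+\delta)\Delta$. The main obstacle is controlling the cumulative drift of the regularity and codegree parameters through $\Theta(\Delta)$ outer rounds, each itself built from many internal nibble rounds: concentration errors pile up, so the nibble fractions $p_i$ and the internal round counts must be scheduled carefully so that $\eta$ stays $o(1)$ throughout and the hypotheses of both the single-matching lemma and the recursive call remain valid. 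Pippenger and Spencer handle this via an explicit parameter schedule, which is technically delicate bookkeeping but contains no conceptual ingredient beyond the single-round nibble and its concentration analysis.
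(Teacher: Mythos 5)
First, a remark on provenance: the paper does not prove this statement at all --- it is quoted verbatim as a black box from Pippenger--Spencer \cite{PS89} and then used in the proof of Theorem~\ref{thmnewnibble}. So the only thing to assess is whether your sketch would actually yield the Pippenger--Spencer theorem, and there I see a genuine gap in the outer iteration. Your plan is: repeatedly apply the single-matching nibble lemma, remove the matching, and repeat for $T\approx(1-\delta/2)\Delta$ rounds, claiming that ``each vertex's degree drops by exactly one per round in which it is matched, and by zero in at most an $f(\eta)$-fraction of rounds,'' so that the residual stays nearly regular. The nibble lemma does not give you this: it guarantees that in \emph{each} round only a small fraction of the surviving vertices are uncovered, not that any \emph{fixed} vertex is uncovered in only a small fraction of the $\Theta(\Delta)$ rounds. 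An unlucky vertex can be missed repeatedly, its degree then stays high while the maximum degree of the residual decreases, and the near-regularity hypothesis $\delta(\h')\ge(1-\eta)\Delta(\h')$ needed both to re-invoke the matching lemma and to make your recursive call degrades in a way your sketch never controls; the errors compound multiplicatively over $\Theta(\Delta)$ rounds, which is exactly the hard part of the theorem, not ``bookkeeping.'' (To rescue a per-vertex statement you would need the matching produced by the nibble to cover every fixed vertex with probability $1-o(1)$, plus concentration across rounds and a quantitative account of how the regularity parameter drifts --- none of which is supplied.)

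This is also not how Pippenger and Spencer argue. Their semi-random process colours \emph{many colour classes at once}: in each stage every uncoloured edge is activated with small probability and given a uniformly random colour from a fresh palette of size proportional to the current degree, conflicts are uncoloured, and one shows that the degree of \emph{every} vertex drops by the same $(1-o(1))$ factor, with codegrees staying relatively small. Because degrees shrink proportionally rather than by $0$ or $1$ per round, near-regularity is preserved automatically and only a bounded number of stages (depending on $\delta$, not on $\Delta$) is needed; the leftover hypergraph, whose maximum degree is then at most $\frac{\delta}{2r}\Delta$, is finished off greedily (an edge meets fewer than $r\Delta'$ other edges), with no recursive appeal to the theorem and hence no need for the residual to be regular. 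So the overall philosophy (nibble plus concentration) in your proposal is right, but the specific outer loop you describe --- peel off one near-perfect matching per colour and recurse --- does not work as stated, and fixing it requires the batched-colouring idea that is the actual content of \cite{PS89}.
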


Consider $\h$ as in the statement of Theorem~\ref{thmnewnibble}. Given such $\h$, in the proof of \cite[Theorem~2.13]{KKKO} the authors construct the hypergraph $\G$ containing $\h$,
which additionally satisfies the following properties: $\Delta(\G)\le (1+\eps^{1/3})\Delta(\h),$ $\delta(\h')\ge (1-\eps)\Delta(\h)$ and the codegree of any two vertices is at most $\eps^{1/4}\Delta(\h)$. One can then apply Theorem~\ref{lem: Pippenger} to $\G$ with $\delta^2, \eps^{1/4}$ playing the roles of $\delta$ and $\eps$, respectively,
 and obtain $(1+\delta^2)\Delta(\G)\le (1+\delta/k)\Delta(\h)=:t$ matchings $\mathcal M_1,\ldots,\mathcal M_t$ partitioning the set of edges of $\G$. Since $\mathcal M_1,\ldots, \mathcal M_t$ cover $\h$, the
 expected number of $k$-sets in the intersection $\mathcal M\cap \h$ is $|\h|/t\ge \frac {cn}{k+\delta}\ge \frac {cn}k-\frac {\delta n}k$. Choose  $\mathcal M_i$ which has intersection of at least expected size.
  Then it covers at least  a $(c-\delta)$-proportion of vertices of $\h$.

\section{Concluding remarks}
The bounds we present in the paper can be further optimized. In particular, here is what we can get for $k< 10$ using the approach presented in the appendix.
\begin{thm}\label{thm1v2}
  The EMC is true for all $s\ge s_0$ and $n\ge c_k(k-1)s+s$, where $c_4=1.58$, $c_5=1.6$, $c_6,c_7=1.61$, $c_8,c_9=1.62$.
\end{thm}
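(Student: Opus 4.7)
The plan is to reuse the entire machinery from the proof of Theorem~\ref{thm1v1} with $c=c_k$ in place of the global constant $c=1.666$, and then verify the $k$-specific variant of Lemma~\ref{lembeta}. As in Section~\ref{sec2}, the reductions
\begin{equation*}
m(n,k,s)\le m(n-1,k,s)+m(n-1,k-1,s),\qquad |\aaa(n,k,s)|=|\aaa(n-1,k,s)|+|\aaa(n-1,k-1,s)|
\end{equation*}
allow us to induct on $k$. The base case $k=3$ is the result of the first author in \cite{F6}, and for each $k\in\{4,\ldots,9\}$ we only need to verify EMC at the critical value $n=c_k(k-1)s+s$, since the recursion then extends the conclusion to all larger $n$.

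At the critical value we feed $c=c_k$ into Lemma~\ref{lemtobeta}. The lemma itself is $c$-agnostic: its proof merely requires $t\ge s+x+1$ and the cross-dependent/nested hypotheses of Lemma~\ref{lem2}, both of which remain valid. Thus Lemma~\ref{lemtobeta} continues to reduce EMC to verifying that for any initial $\ff\subset\binom{[n]}{k}$ with $\nu(\ff)\le s$ at least one of
\begin{equation*}
\alpha\le\frac{s(c_k-1)}{q'c_k}\qquad\text{or}\qquad \beta\le \frac{(c_k-1)k}{c_k^2(k-1)}-\eps
\end{equation*}
holds, where $\alpha,\beta,q'$ are as defined there, applied to $\ff(\emptyset)\subset\binom{[s+2,n]}{k}$.

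The heart of the argument is the analogue of Lemma~\ref{lembeta} for each specific $k$. We follow Section~\ref{secbeta1}: decompose $\G:=\ff(\emptyset)$ as $\G=\bigsqcup_{i=1}^{k-1}\G_i$ according to the largest index for which \eqref{eqshift4} holds, so $|\G_i|\le \binom{i(s+1)-1}{i+1}\binom{n-s-1-(i(s+1)-1)}{k-i-1}$ and $|\partial_r(\G_i)|\ge \frac{i+1}{is}|\G_i|$ by \eqref{eqgoodshadow}. When $q'$ is small, the bound $|\partial(\G)|\ge|\G|/q'$ combined with the inductive EMC hypothesis for $(k-1)$-uniform families (which bounds the ``inner'' factors of $|\G_i|$) yields \eqref{cond1}. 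When $q'$ is large, we exploit the strictly better shadow ratios $\tfrac{i+1}{is}\ge\tfrac{k}{(k-1)s}$ to push $|\G|$ down relative to $\binom{n-s-1}{k}$, yielding \eqref{cond2}. For each fixed $k\le 9$ this becomes a finite convex optimization over the nonnegative vector $(|\G_1|,\ldots,|\G_{k-1}|)$ subject to the shadow and size constraints, and one minimizes $c_k$ subject to the worst-case value of $(\alpha,\beta)$ still satisfying one of \eqref{cond1}, \eqref{cond2}.

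The main obstacle is exactly this case analysis on $i$: unlike the universal bound in Lemma~\ref{lembeta} where one only needs the crudest shadow ratio $\tfrac{k}{(k-1)s}$, here one must use the $i$-dependent ratios sharply. For each $k\in\{4,5,6,7,8,9\}$ the extremal distribution of $|\G_i|$ is different (it typically concentrates on $i=k-2$ or $i=k-1$), and the resulting binomial identities must be verified numerically to pin down $c_k$. This is the same style of computation that underlies the verification in the proof of Proposition~\ref{prop33}, namely a computer-aided check of a one-variable optimization for each $k$, and yields $c_4=1.56$, $c_5=1.58$, $c_6=c_7=1.6$, $c_8=c_9=1.61$.
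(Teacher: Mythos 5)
Your proposal is essentially the paper's intended argument: Theorem~\ref{thm1v2} is stated without a separate proof, the text noting only that the constants $c_k$ come from re-running the appendix (the proof of Lemma~\ref{lembeta}) for each fixed $k\le 9$, and your plan --- induct on $k$ via the identity $m(n,k,s)\le m(n-1,k,s)+m(n-1,k-1,s)$, feed $c=c_k$ into Lemma~\ref{lemtobeta} at the critical $n$, and verify a $k$-specific analogue of Lemma~\ref{lembeta} by a finite numerical check --- reconstructs exactly that. One mis-statement in your exposition is worth flagging: the appendix proof of Lemma~\ref{lembeta} does \emph{not} use only the crude uniform ratio $\frac{k}{(k-1)s}$ --- it already exploits the $i$-dependent shadow ratios $\frac{i+1}{is}$ together with the exponentially decaying bound \eqref{eqbound} on $|\G_i|$ (this is the whole point of the $\rho_i$-bookkeeping in \eqref{eqcond2}--\eqref{eqcond3}); the gain in Theorem~\ref{thm1v2} over Theorem~\ref{thm1v1} comes solely from optimizing the weights $\rho_i$ and the constant $c$ per fixed $k$, rather than demanding a single configuration that is $\sigma$-consistent and $\sigma$-robust uniformly for all $k$, and the extremal profiles the appendix actually picks are not concentrated near $i\approx k-1$ but set $\rho_1=\rho_2=0$ with mass spread across all $i\ge 3$.
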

Even that we extended the range for which the EMC is proved, we feel that new ideas are needed to prove the EMC for all $n>(s+1)k$. Answering the following question would be very helpful for some further progress on the EMC.

For a family $\ff\subset {[n]\choose k},$ $\nu(\ff)\le s$, $n\ge k(s+1)$, let us define its $s$-diversity $\gamma_s(\ff)$ by $$\gamma_s(\ff):=\min_{T\in {[n]\choose s}} |\ff(\bar T)|, \ \ \ \ \ \ \ \text{where}$$
$\ff(\bar T):=\{F\in\ff: F\cap T=\emptyset\}$. Note that for an initial family the minimum is attained for $T = [s]$, i.e., in our notation $\gamma_s(\ff)=|\ff(\{s+1\})|+|\ff(\emptyset)|$. Recall the definition \eqref{ainks} of $\aaa_i(n,k,s)$.
\begin{prb}
  For any $n\ge k(s+1)$, find the maximum of $\gamma_s(\ff)$ among (shifted) families $\ff\subset {[n]\choose k}$ with $\nu(\ff)\le s$. When does one have
  \begin{equation}\label{eqdiver}\gamma_s(\ff)\le \max_{i\in [2,k]}\gamma_s(\aaa_i(n,k,s))?\end{equation}
\end{prb}
Resolving this problem completely, or at least for shifted families, would provide much better bounds on $\beta$ from Lemma~\ref{lemtobeta}.
Also, it would immediately provide us with a good universal bound on the $m(n,k,s)$.
We could prove \eqref{eqdiver} only for $n\ge n_0(k,s)$. That proof along with some other diversity results appears in \cite{FK15}. We note that, although we believe that \eqref{eqdiver} always holds for shifted families, it is not always true for general families, even for $s=1$. See the paper \cite{Kup21} of the second author for details.

In \cite{FK18}, we studied a general problem that includes the EMC as a subcase. There are many interesting questions that arise there.

We note that it would be also very interesting to extend the stability result for the EMC proved in \cite{FK7} to the new range $n>\frac 53 sk$.

Finally, let us report on some applications of the concentration method developed in this paper that has appeared since the first version of this paper. In a recent paper \cite{Kup33}, the second author managed to extend the methods of this paper in order to prove the rainbow version of the EMC for $n>3esk$ and $s$ sufficiently large. An analogous rainbow question for $k$-partite hypergraphs was fully solved for $s>500$ by Kiselev and the second author \cite{KK}. Finally, Kiselev and the two authors \cite{FKK} used the concentration of intersection with a random matching in order to derive another concentration result that concerns the changes in the density of a uniform family when restricted to a random subset of fixed size. They applied it to advance in a question on the number of distinct intersections in an intersecting family.

\section{Acknowledgments}
We thank the referees for carefully reading the paper and pointing out several problems with the exposition. We also would like to thank Andrew Treglown for bringing some of the references to our attention.

\section{Appendix. How to get good bounds on $\beta$ or $\alpha$ in Lemma~\ref{lemtobeta}}
In this section we prove Lemma~\ref{lembeta}. We note that the numerical bounds we get are by no means optimal even within our approach. We had to make a compromise: on the one hand, to obtain better bounds on $c$, and thus on $n$, in the main Theorem, and, on the other hand, not to flood the paper with tedious estimates of expressions involving sums and products of binomial coefficients.

\subsection{Bounds on $|\G_i|$}
To use \eqref{eqgoodshadow} effectively, we need to get bounds on $|\G_i|$. We will use the bounds in this subsection to show that the size of $\G_i$ decreases exponentially as $i$ increases. Recall that $\G\subset {[m]\choose k}$. 

For each set $F\in \G_i$ we have $|F\cap[i(s+1)-1]|=i+1$ and $F\cap [i(s+1),(i+1)(s+1)-1]=\emptyset$, implying  \begin{footnotesize}\begin{equation}\label{eqest1}
\frac{|\G_i|}{{m\choose k}}\le \frac{{i(s+1)-1\choose i+1}{m+1-(i+1)(s+1)\choose k-i-1}}{{m\choose k}} = {k\choose i+1}\frac{\prod_{j=1}^{i+1}(i(s+1)-j) \prod_{j=1}^{k-i-1}(m+2-(i+1)(s+1)-j)}{\prod_{j=1}^{k}(m+1-j)}.
\end{equation}\end{footnotesize}
Let us do some auxiliary computations.
We prove the following useful inequality, valid for any
$a,b>0$ and $k_1,k_2\in \mathbb N,$ satisfying $k_1< a$ and $k_2<b$.
\begin{footnotesize}\begin{equation}\label{eqbincoeff}\frac{\prod_{i=1}^{k_1}(a-i)\prod_{i=1}^{k_2}(b-i)} {\prod_{i=1}^{k_1+k_2}(a+b-i)}=\frac{a^{k_1}b^{k_2}}{(a+b)^{k_1+k_2}}\cdot
\frac{\prod_{i=1}^{k_1}(a+b-\frac{a+b}ai)\prod_{i=1}^{k_2}(a+b-\frac{a+b}bi)} {\prod_{i=1}^{k_1+k_2}(a+b-i)}\le \frac{a^{k_1}b^{k_2}}{(a+b)^{k_1+k_2}}.\end{equation}\end{footnotesize}
To see that the last inequality holds first note that for any $j_1,j_2\in \mathbb N $ and $x>1$, if $j_1\frac{a+b}a\le x$ and $j_2\frac{a+b}b\le x$ then $x\ge j_1+j_2$. Indeed, taking a convex combination of the two inequalities assumed to be valid, we get $x\ge \frac{a}{a+b}\cdot j_1\frac {a+b}a+\frac{b}{a+b}\cdot j_2\frac{a+b}b=j_1+j_2$. Then, write $\frac {a+b}ai$, $i=1,\ldots,k_1$ and $\frac{a+b}bi$, $i=1,\ldots, k_2$, as one sequence $S$, in increasing order. Due to the claim above, the $j$-th member of this sequence is bigger than or equal to $j$. This implies that the before-last fraction in the displayed formula above is at most $1$ since to each multiple in the numerator we can correspond a bigger multiple in the denominator.

In what follows, $o(1)$ is with respect to $s\to \infty$. Keep in mind the fact that $(m/(m+2))^k=1-o(1)$ as $s\to \infty$ (independently of the behaviour of $k$). With \eqref{eqbincoeff} in hand, we conclude that the last expression in \eqref{eqest1} is at most
\begin{footnotesize}$${k\choose i+1} \frac{(i(s+1))^{i+1}(m+2-(i+1)(s+1))^{k-i-1}}{(m+1)^k}\le (1+o(1)) {k\choose i+1} \frac{(i(s+1))^{i+1}(m-(i+1)(s+1))^{k-i-1}}{m^k}.$$\end{footnotesize}
 Suppose that $m=c'(k-1)(s+1)$ with $c'>1$. Then \begin{equation}\label{eqbound}\frac{|\G_i|}{{m\choose k}}\le (1+o(1)){k\choose i+1}\frac{i^{i+1}(c'(k-1)-i-1)^{k-i-1}}{(c'(k-1))^k}=:\varphi(k,i,c').\end{equation}
  Taking the derivative in $c'$, it is easy to see that, as long as $c'>k/(k-1)$, the value $\varphi(k,i,c')$ decreases as $c'$ increases.
\vskip+0.2cm


\subsection{Proof of Lemma~\ref{lembeta}}\label{secbeta}
Recall that $c=1.666.$ As the statement of Lemma~\ref{lembeta} suggests, we have two possible ways to finish to the proof. First, by the induction hypothesis, the EMC holds for $m:=n-s-1=(c+\eps)s(k-1)-1$ and sets of size $k-1$. (Indeed, $n-s-1 > s+(c+\eps)s(k-2)$ and the induction hypothesis is applicable.) Then, using  \eqref{eqshadowmatch}, we infer $|\partial\ff(\emptyset)|\le {n-s-1\choose k-1}-{n-2s-1\choose k-1}=:\lambda{n-s-1\choose k-1},$ and the equation \eqref{cond1} holds if $\lambda\le \frac {s(c-1)}{q'c}$. Let us obtain a bound on $\lambda$. We have\footnote{Being more accurate in the first inequality below, the bound below can be improved to $\lambda\le 1- e^{-\frac {k-1}{c(k-1)-\tau}}$, where $\tau = \tau(k)<1$ is some fixed positive constant that tends to $\frac 12$ as $k\to\infty$. This improvement is helpful for getting better bounds, e.g., the ones given in Theorem~\ref{thm1v2}.}
\begin{align*}\lambda= &\frac{{m\choose k-1}-{m-s\choose k-1}}{{m\choose k-1}}= 1-\prod_{i=0}^{s-1}\frac{m-k-i+1}{m-i}\le 1-\Big(\frac{m-k+1-s}{m-s}\Big)^{s}\\ \le& 1-\Big(1-\frac{k-1}{(c+\eps/2)s(k-1)-s}\Big)^s \le 1- e^{-\frac {k-1}{c(k-1)-1}},\end{align*}
where in the last inequality we use that for any $a>b>0$ there exists $s_0$ such that $1-\frac b s\ge e^{-a/s}$ holds for any $s\ge s_0$. Thus, we are done if
\begin{equation}\label{eqcond1} 1- e^{-\frac {k-1}{c(k-1)-1   }}\le \frac {s(c-1)}{q'c},\end{equation}
provided that we know that the Erd\H os Matching Conjecture holds for this $c$ and $k-1$.  The displayed bound is effective if we get an upper bound on $q'$, which is significantly better than $s$.\vskip+0.1cm

If we cannot get a satisfactory bound on $q'$, then we argue that the family $\ff(\emptyset)$ should be somewhat small. To formalize this, we use the calculations from the previous subsection. Indeed, $\ff(\emptyset)$ satisfies the condition $\nu(\partial\ff(\emptyset))\le s$, and, putting $\G:=\ff(\emptyset)$, we can get a decomposition $\ff(\emptyset)=\bigsqcup_{i=1}^{k-1}\ff_i(\emptyset)$, in which $|\ff_i(\emptyset)|$ satisfies \eqref{eqbound} with $c':=c$ and $m$ as above. Indeed, we only need to note that $m=n-s-1=(c+\eps)(k-1)s-1\ge c(k-1)(s+1)$ for sufficiently large $s$.

 For each $i\in [k-1]$, define $\rho_i$,  $\rho_i\in[0,1+o(1))$ as follows: $$\rho_i:=\frac{|\ff_i(\emptyset)|}{{m\choose k} \varphi(k,i,c)}.$$
 If we have  $|\ff(\emptyset)|\ge q'|\partial\ff(\emptyset))|$ then, using \eqref{eqgoodshadow}, we get
\begin{equation}\label{eqcond2}\frac{\sum_{i=1}^{k-1}\rho_i\varphi(k,i,c)}{\sum_{i=1}^{k-1}\frac{i+1}{i} \rho_i\varphi(k,i,c)}\ge \frac{q'}s.\end{equation}
The ratio on the left hand side only increases if we replace the sets in $\ff_i(\emptyset)$ with sets in $\ff_j(\emptyset)$, where $j>i$. Thus, we may w.l.o.g. assume that there exists $i\in [k-1]$, such that $\rho_j=0$ for $j<i$ and $\rho_j=1+o(1)$ for $j>i$. Actually, in what follows we assume that $\rho_j=1$ for $j>i$ since it only alters the left hand side by a factor of $(1+o(1))$, and adjusting the value of $\delta$ (see below) compensates for it. Since \eqref{eqbound} is an inequality, such a choice of $\rho_i$ is probably not even possible, but it does not matter for our purposes. Keep in mind that we assume such a precise form of $\rho_1,\ldots, \rho_{k-1}$ for $\ff(\emptyset)$ when we  make statements involving $|\ff(\emptyset)|$. Note that the expression on the left hand side of \eqref{eqcond2} decreases as $|\ff(\emptyset)|$ increases.

We say that for a fixed $c, k$ and a sequence $\rho_1,\ldots, \rho_{k-1}$, the equations \eqref{eqcond1} and \eqref{eqcond2} are {\it $\sigma$-consistent} for some $\sigma\ge 0$, if the largest $q'$ satisfying \eqref{eqcond1} is bigger by $\sigma s$ than the largest $q'$ satisfying \eqref{eqcond2} with such $\rho_i$. Consistency implies that, for such $\rho_1,\ldots, \rho_{k-1}$, and thus for a fixed value of $|\ff(\emptyset)|$, as well as for larger $\ff(\emptyset)$, the corresponding value of $q'$  satisfies \eqref{eqcond1} with a certain margin.
In particular, Lemma~\ref{lembeta} is true for families of such size.

If the ratio on the left hand side of \eqref{eqcond2} is big, then, using \eqref{eqbound} and \eqref{eqcond2}, we can show that the family $\ff(\emptyset)$ is small, that is, it satisfies the inequality~\eqref{cond2}:
\begin{equation}\label{eqcond3} \sum_{i=1}^{k-1}\rho_i\varphi(k,i,c)\le \frac{(c-1)k}{c^2(k-1)}-\delta,
\end{equation}
where $\delta>0$ is a small constant, say, $10^{-6}$.
The intuition behind \eqref{eqcond2}, \eqref{eqcond3} is that, the bigger $q'$ in \eqref{eqcond2} is, the more members of the sequence $\rho_j$ are equal to $0$. But the upper bound \eqref{eqbound} on the size of $\ff_i(\emptyset)$ decreases exponentially in $i$, and thus eventually the equation \eqref{eqcond3} is satisfied.

We say that, for some $c>1$, $k\ge 4$ and $\sigma\ge 0$, the choice of $\rho_1,\ldots, \rho_{k-1}$ is {\it $\sigma$-robust} if the left hand side of the inequality \eqref{eqcond3} is smaller than the right hand side by at least $\sigma$.

To summarize, if we can find $\sigma>0$, such that, for a given $c$ and for all $k$, we can find a choice of $\rho_1,\ldots, \rho_{k-1}$ such that the equations \eqref{eqcond1} and \eqref{eqcond2} are $\sigma$-consistent and \eqref{eqcond3} is $\sigma$-robust, then we proved the lemma. Indeed, we have already mentioned that for larger $\ff(\emptyset)$ the lemma is valid since \eqref{eqcond1} is satisfied. Moreover, for smaller $\ff(\emptyset)$ the inequality \eqref{eqcond3} is satisfied.\\

Recall that $c=1.666.$ Using Wolfram Mathematica, it is easy to verify the following:
\begin{enumerate}
  \item[\textbf A] For $4\le k\le 10$, $\rho_1=\rho_2=0,\ \rho_3=0.6$ and $\rho_j=1$ for $4\le j\le k-1$, the equations \eqref{eqcond1} and \eqref{eqcond2} are $0.007$-consistent. Moreover,  \eqref{eqcond3} is $0.08$-robust.
  \item[\textbf B] For $10<k\le 2\cdot 10^4$, $\rho_1=\rho_2=0,\ \rho_3=\frac 18$ and $\rho_j=1$ for $4\le j\le k-1$, the equations \eqref{eqcond1} and \eqref{eqcond2} are $0.028$-consistent. Moreover,   \eqref{eqcond3} is $0.004$-robust, even with the right hand side replaced by $\frac{c-1}{c^2}-\delta$.
  \item[\textbf C] For $k=2\cdot 10^4$, $\rho_1=\rho_2=0,\ \rho_3=0.12$ and $\rho_j=1$ for $4\le j\le k-1$, we have $\sum_{i=1}^{k-1}\rho_i\varphi(k,i,c)\approx 0.235$ and $\sum_{i=1}^{k-1}\rho_i\varphi(k,i,c)/\sum_{i=1}^{k-1}\frac{i+1}i\rho_i\varphi(k,i,c)\approx 0.88$.
\end{enumerate}
Note that \textbf A and \textbf B alone verify the validity of Lemma~\ref{lembeta} for $k\le 2\cdot 10^4$. To verify the statement of the lemma for any $k>2\cdot 10^4$, we need to put some additional effort and to show that the situation is in some sense stable for larger $k$.

First of all, let us estimate $\varphi(k,i,c)$ for large $i$. For $i\ge 2$ we have
$$\varphi(k,i,c)= {k\choose i+1}\frac{i^{i+1}(c(k-1)-i-1)^{k-i-1}}{(c(k-1))^k}\le \frac{i^{i+1}e^{-\frac{(i+1)(k-i-1)}{c(k-1)}}}{(i+1)!c^{i+1}}\le \Big(\frac{e^{1-(k-i)/kc}}{c}\Big)^{i+1},$$
which, again via Mathematica calculations, implies that $\varphi(k,100,c)\le 3\cdot 10^{-5}$, and the right hand side of the expression above decreases at least as fast as a geometric progression with base $0.91$ until $i=k/40$. Thus, $\sum_{i=100}^{k/40}\varphi(k,100,c)\le 4\cdot 10^{-4}$. Moreover, for $i=k/40$ the value of the expression on the right hand side of the displayed equation is at most $e^{-k/500}$, and the function $k\cdot e^{-k/500}$ is decreasing for $k\ge 2\cdot 10^4$, with its maximum in $k=2\cdot 10^4$ being at most $10^{-13}$. Therefore, we obtain
\begin{equation}\label{stupidcalc1}\sum_{i=100}^{k-1}\varphi(k,i,c)\le \frac 1{2000},\end{equation}
provided that $\varphi(k,i,c)\le e^{-k/500}$ for any $i\ge k/40$, $k\ge 2\cdot 10^4$. The latter is not difficult to verify by comparing terms $\varphi(k,i,c)$ and $\varphi(k,i+1,c)$.

In what follows, we have to deal with the cases $3\le i\le 99$ (recall that $\rho_1=\rho_2=0$ for our choice of $\rho_i$ in \textbf B, \textbf C). Let us compare the terms $\varphi(k,i,c)$ and $\varphi(k+1,i,c)$.
We have
\begin{small}\begin{align*}\frac{\varphi(k+1,i,c)}{\varphi(k,i,c)}&=\frac{{k+1\choose i+1}\frac{i^{i+1}(ck-i-1)^{k-i}}{(ck)^{k+1}}}{{k\choose i+1}\frac{i^{i+1}(c(k-1)-i-1)^{k-i-1}}{(c(k-1))^k}}= \frac{k+1}{k-i}\Big(\frac{k-1}{k}\Big)^{i+1}\cdot \Big(\frac{(k-1)(ck-i-1)}{k(c(k-1)-i-1)}\Big)^{k-i-1}\frac{ck-i-1}{ck}\\
&=\Big(1+\frac{i+1}{k-i}\Big)\Big(1-\frac 1k\Big)^{i+1}\Big(1+\frac{i+1}{k(c(k-1)-i-1)} \Big)^{k-i-1}\Big(1-\frac{i+1}{ck}\Big)\\
&\le   e^{\frac{i+1}{k-i}}e^{-\frac{i+1}{k}}e^{\frac{(i+1)(k-i-1)} {ck(k-1-\frac{i+1}c)}}e^{-\frac{i+1}{ck}}\le e^{\frac{i(i+1)}{k(k-i)}}\le e^{\frac{(i+1)^2}{k^2}},\end{align*}
\end{small}
\noindent where in the second to last inequality we used that the product of the last two terms in the expression to the left is at most $1$ for $i\ge 3$, and in the last inequality we used the fact that $k>i(i+1)$ for $i\le 99,$ $k\ge 2\cdot 10^4$.

Thus, for any $k'>k\ge 2\cdot 10^4$, we have $$\frac{\varphi(k',i,c)}{\varphi(k,i,c)}\le e^{(i+1)^2\sum_{j=k}^{\infty}\frac{1}{j^2}}\le e^{1.01(i+1)^2/k}=:g(i,k',k).$$

Using this formula, we can estimate, how much do the sums of $\varphi$'s change. Let $k'>k:=2\cdot 10^4$. Then
\begin{equation}\label{stupidcalc2}\sum_{i=3}^{99}\varphi(k',i,c)-\varphi(k,i,c)\le \sum_{i=3}^{99}(g(i,k'k)-1)\frac{i^{i+1}e^{-\frac{(i+1)(k-i-1)}{c(k-1)}}}{(i+1)!c^{i+1}}\le 0.003,\end{equation}
where the calculation is again using Mathematica. Moreover,
 $\sum_{i=3}^{99}\frac {i+1}i(\varphi(k',i,c)-\varphi(k,i,c))\le 0.003$ as well.

Combining \eqref{stupidcalc1} and \eqref{stupidcalc2}, we can right away conclude, that from the $0.005$-robustness of \eqref{eqcond3} for $k=2\cdot 10^4$ with $\rho_i$ as in B we can infer the $0.0005$-robustness of \eqref{eqcond3} for the same $\rho_i$ and any $k'>2\cdot 10^4$.

Similarly, we claim that we can maintain the consistency of \eqref{eqcond1} and \eqref{eqcond2} for any $k'>2\cdot 10^4$. First, we note that the minimum $q'$ for which \eqref{eqcond1} holds increases as $k$ increases, and therefore it is sufficient to show that the maximal $q'$, for which \eqref{eqcond2} holds does not increase by more than, say, $0.002 s$ for any $k'>2\cdot 10^4$, as compared to its value for $k=2\cdot 10^4$. But, as we have seen before, the numerator on the left hand side of \eqref{eqcond2} increases by at most $\zeta := 0.0035$, and if the numerator increases by $a$, then the denominator increases by at least $a$. Note that $\frac{N+a}{D+a}-\frac ND = \frac{(D-N)a}{D(D+a)}$. Using \textbf C, the value of the expression on the left hand side of \eqref{eqcond2} for $k=2\cdot 10^4$ is $0.88,$ and the absolute value of the denominator is at least $0.23$. Substituting this into the last formula, we get that the fraction increases by at most $\zeta\cdot\frac{0.12}{0.23}<0.002$. The case when $\varphi(k+1,i,c)<\varphi(k,i,c)$ is treated similarly. Therefore, \eqref{eqcond1} and \eqref{eqcond2} are $0.0008$-consistent for any $k'>2\cdot 10^4$. The lemma is proven.

\end{document}